\newcounter{desclist} 
\setulmarginsandblock{\uppermargin}{\lowermargin}{*}
\date{}
\theoremstyle{plain} 
\newtheorem{thm}{Theorem}[section] 
\newtheorem{prop}[thm]{Proposition} 
\newtheorem{lem}[thm]{Lemma}
\newtheorem{cor}[thm]{Corollary}
\theoremstyle{definition} 
\newtheorem{exmpl}[thm]{Example}
\newtheorem{rem}[thm]{Remark}
\newlist{tightitemize}{itemize}{2} 
\newlist{tightenumerate}{enumerate}{2}
\newlist{tightdescription}{description}{2}
\setlist[tightitemize]{label=\textbullet, nosep, leftmargin=*, align=left, itemindent=0pt, labelindent=1.5em, labelsep*=0.3em}
\setlist[tightenumerate]{label=(\roman*), nosep, leftmargin=*, align=left, itemindent=0pt, labelindent=1.5em, labelsep*=0.3em}
\setlist[tightdescription]{nosep, leftmargin=!, align=left, itemindent=0pt, labelindent=1.5em, labelsep*=0.3em, labelwidth=4em}
\newcommand*{\NN}{\mathbb{N}}
\newcommand*{\ZZ}{\mathbb{Z}}
\newcommand*{\RR}{\mathbb{R}}
\newcommand*{\Card}[1]{\lvert #1 \rvert} 
\newcommand*{\Close}[1]{\overline{#1}} 
\newcommand*{\DefAs}{:=}
\newcommand*{\AsDef}{=:}
\DeclareMathOperator{\Metric}{d} 
\newcommand*{\Alphab}{\mathscr{A}} 
\DeclareMathOperator{\Per}{Per}  
\DeclareMathOperator{\Aper}{Aper}  
\newcommand*{\Orb}{\mathscr{O}} 
\newcommand*{\Shift}{\sigma} 
\newcommand*{\Rot}{\varrho} 
\newcommand*{\FactOdo}{\pi} 
\newcommand*{\Embed}{\eta} 
\newcommand*{\toeplw}{x} 
\newcommand*{\InfWord}{y} 
\newcommand*{\infWord}{z} 
\newcommand*{\Odo}{\Omega} 
\newcommand{\cplx}[1]{\mathscr{C}_{#1}} 
\title{Almost automorphic subshifts with finiteness conditions for the boundary of the separating cover}
\author{Daniel Sell\thanks{Faculty of Mathematics and Computer Science, Nicolaus Copernicus University in \foreignlanguage{polish}{Toruń, ulica Chopina~12/18, 87-100 Toruń}, Poland. \begin{texttt}dsell@mat.umk.pl\end{texttt}}\and Franziska Sieron\thanks{Faculty of Mathematics and Computer Science, Friedrich Schiller University \foreignlanguage{ngerman}{Jena, Ernst-Abbe-Platz~2, 07743 Jena}, Germany. \begin{texttt}franziska.sieron@uni-jena.de\end{texttt}}}
\begin{document}
\allowdisplaybreaks 

\maketitle

\begin{abstract}
In this article we study orbits of proximal pairs in almost automorphic subshifts. The corresponding orbits in the maximal equicontinuous factor are precisely those orbits that intersect the boundary of the subshift's separating cover. We impose certain finiteness conditions on this boundary and explore the resulting consequences for the subshift, for instance in terms of complexity or the relations between proximal and asymptotic pairs. The last part of our article deals with Toeplitz subshifts without a finite boundary. There we treat the question of necessary conditions and sufficient conditions for the existence of a factor subshift with a finite boundary. Throughout the whole article, we provide explicit constructions for Toeplitz subshifts to illustrate  our findings and the necessity of our assumptions.
\end{abstract}

\section{Introduction}

Asymptotic and proximal pairs play an important role in many areas of symbolic dynamics. In fact, the existence of asymptotic pairs in a subshift distinguishes the interesting non-periodic case from the trivial periodic one, more precisely: a subshift over a finite alphabet contains a non-periodic element, if and only if it contains a non-trivial asymptotic pair, that is, two distinct elements that agree on a half-line (see for instance \cite[pp.\@~18--19]{Ausl_MiniFlowsExtension}). Similarly, elements are called a proximal pair if they agree on arbitrarily large patches, but possibly with ``interruptions'' where they differ. Often it is possible to deduce properties of the subshift from properties of its asymptotic pairs. In this context, bounds on the number of asymptotic components turned out to be especially useful (asymptotic pairs belong to the same component if they differ only by a finite shift). For instance, it was shown in \cite[Theorem~3.1]{DonoDuraMaaPeti-AutoGrLowCompl} that, if the number of asymptotic components in a minimal subshift is finite, then this number is a bound for the cardinality of \( \operatorname{Aut}( X , \Shift ) / \langle \Shift \rangle \), where \( \operatorname{Aut} \) is the automorphism group of the subshift and \( \Shift \) denotes the shift. The finite number of asymptotic component in Sturmian subshifts and simple Toeplitz subshifts can also be used for proving  uniformity of locally constant \(\operatorname{SL}( 2, \RR ) \)-cocycles, since representatives of the components can serve as leading sequences (\cite{GLNS_LeadingSeq_Journ}). Conversely, finiteness of the number of asymptotic components can be deduced from linear complexity along a subsequence (\cite[Lemma~3.2]{DonoDuraMaaPeti-AutoGrLowCompl}) or from good control of a  combinatorial decomposition structure of the words (\cite[Theorem~1.2]{EspinMaass_AutomGrSadic}).

In this article we study proximal and asymptotic orbits of almost automorphic subshifts. There elements are proximal if they have the same image under the factor map to the maximal equicontinuous factor (\cite{Paul_AlmAutomFlows}, \cite{Markley_CharSeq}). We will study proximal orbits via the subshift's separating cover, where they correspond to orbits that intersect the boundary \cite[Proposition~1.1]{Markley_CharSeq}, and via semicocycles, where they correspond to orbits with discontinuities (see Section~\ref{subsec:SepCover} for details). Similar to the above-mentioned results for the asymptotic case, this article is concerned with bounds on the number of proximal components and related notions. We express them as restrictions on the boundary of the subshift's separating cover. This topic is already present in the Markley's discussion of characteristic sequences (that is, almost automorphic points), where he suggests that sequences generated by a cover with certain finiteness properties (so-called Hedlund sequences), ``seem to be a natural class of sequences which we should be able to understand more completely than characteristic sequences in general'' (\cite[Section~3]{Markley_CharSeq}).

Some of our results (and all our examples) concern a special class of almost automorphic subshifts, namely so-called Toeplitz subshifts. Their maximal equicontinuous factor is an odometer, and in fact every extension of an odometer is Borel\(^{*}\) isomorphic (see \cite{DownaLacr_Alm1to1} for the definition) to a Toeplitz subshift (\cite[Theorems~15.2]{Downa_OdomToepl}, \cite[Theorem~7]{DownaLacr_Alm1to1}). Moreover, every positive number \( h > 0 \) can be realized as the entropy of a strictly ergodic Toeplitz subshift with arbitrarily prescribed odometer as maximal equicontinuous factor (\cite[Theorem~B\(^{\prime}\)]{Drwlo_StrictErgodToeplArbitEntro}, but see also \cite[Section~5]{{Wil_ToepNotUniqErgod}} or \cite[Corollary~3]{DownIwan_QuasiUnifConv} for earlier results on realising entropies in Toeplitz systems). For additional background information on Toeplitz subshifts and the rich source of examples that this class of systems has proven to be, we refer the reader to \cite{Downa_OdomToepl} and the references therein.

The proximal elements of a Toeplitz subshift are precisely the elements without the Toeplitz property (sometimes called Toeplitz orbitals). Boundary points of the odometers separating cover translate to non-periodic positions in the orbital. Among the subshifts with finiteness conditions on the boundary, we therefore find Toeplitz subshifts with ``few'' non-periodic positions. Due to their relatively simple structure, they have been widely studied and include for example Toeplitz words with a single hole per period (see for instance \cite{GKBYM_MaxPatternToepl} and \cite{Sell_SimpToepCombETDS} for combinatorial topics, or \cite{LiuQu_Simple} and \cite{LiuQu_Bounded} for Schrödinger operators defined on them), but also Toeplitz subshifts with separated holes (studied for example in \cite{BuKwiat_TopolCentral} from the point of view of automorphism groups). However, other important examples of Toeplitz subshifts do not exhibit such finiteness properties. Notably, this applies to generalised Oxtoby subshifts (see Proposition~\ref{prop:OxtoNotB}), which also have proven to be a rich source of examples and counterexamples: they can for instance define minimal subshifts with an arbitrary prescribed number of ergodic measures (\cite[Section~4]{Wil_ToepNotUniqErgod}), or minimal uniquely ergodic subshifts with positive entropy and trivial centraliser (\cite[Section~2]{BuKwiat_StrictErgod}).

Our article is organised as follows: after a preliminary section on notation and basic definitions, we discuss in Section~\ref{sec:ProxiCompBdry} the connection between proximal orbits and boundary points of a separating cover. There, we also state precisely the finiteness properties that we consider and how they are related to factor subshifts and to our two main example classes, namely Toeplitz subshifts with separated holes and Oxtoby subshifts. In Section~\ref{sec:ConseqOfFin}, we study which properties of the subshift are implied by our finiteness conditions of the boundary. For Toeplitz subshifts which violate the strongest of our conditions of finiteness, we ask in Section~\ref{sec:FactFiniteProxi} if there exists at least a factor subshift which satisfies this condition.

\subsection*{Acknowledgements}

We are indebted to Gabriel Fuhrmann and Daniel Lenz for initiating our work and bringing these questions to our attention. In addition, we thank Aurelia Dymek for interesting discussions and helpful remarks. We gratefully acknowledge the hospitality of the Faculties of Mathematics and Computer Science in Jena and Toruń respectively, which enabled us to conduct part of this work during visits of D.S. to Jena in April 2023 and of F.S. to Toruń in November 2023. The research of D.S. was partially funded by \foreignlanguage{ngerman}{Deutsche Forschungsgemeinschaft} (DFG, German Research Foundation)—\foreignlanguage{ngerman}{Projektnummer} 454053022. The work of F.S. was partially supported by a PhD Scholarship of FSU Jena, funded by the State of Thuringia (``\foreignlanguage{ngerman}{Landesgraduiertenstipendium}'').

\section{Preliminaries}

\subsection{Words, subshifts, factors, proximality}
\label{subsec:Subshifts}

Let \( \Alphab \) be a finite set, called the \emph{alphabet}. The elements of \( \Alphab \) are called \emph{letters} and the elements of \( \Alphab^{\ZZ} \) are known as \emph{(infinite) words}. For \( x \in \Alphab^{\ZZ} \), we use \( x( j ) \) to refer to the letter at position \( j \in \ZZ \) in \( x \), and we write \( x[ i , j ] \) for the finite word that occurs at \( i , i + 1 , \hdots , j \) in \( x \) (all our intervals \( [ i , j ] \) should be read as \( [ i , j ] \cap \ZZ \)). For a finite word \( u \), we denote by \( u ^{n} \) the \( n \)-fold repetition of \( u \), with \( u ^{0} \) denoting the empty word. On \( \Alphab^{\ZZ} \) we consider the \emph{(left-)shift} \( \Shift \colon \Alphab^{\ZZ} \to \Alphab^{\ZZ} \), given by \( \Shift( x )( j ) \DefAs x( j+1) \) for all \( j \in \ZZ \). We equip \( \Alphab^{\ZZ} \) with the product topology, that is, two words \( x , y \in \Alphab^{\ZZ} \) are ``close'' if they agree on a ``large'' interval around the origin. A closed and \( \Shift \)-invariant subset \( X \subseteq \Alphab^{\ZZ} \) (together with the shift action) is called a \emph{subshift}. By \( \Orb( x ) \DefAs \{ \Shift^{n}( x ) : n \in \ZZ \} \) we denote the \emph{orbit} of \( x \in \Alphab^{\ZZ} \). A subshift is called \emph{minimal} if every orbit in it is dense (this is equivalent to every forward orbit being dense, and equivalent to every backward orbit being dense). A subshift is called \emph{aperiodic} if it contains no shift-periodic element. We write \( \cplx{x} \colon \NN \to \NN \) for the \emph{complexity} of a word \( x \), that is, \( \cplx{x}( L ) \) denotes the number of words of length \( L \) which appear in \( x \). Of the many notions that describe the complexity's growth rate in more detail (see for example \cite[Section~2.3]{DonoDuraMaaPeti-AutoGrLowCompl}), we recall the following two: the complexity is called
\begin{tightitemize}
\item{non-superlinear, if \( \liminf_{L \to \infty} \frac{ \cplx{x}( L ) }{ L } < \infty \) holds,}
\item{superpolynomial along a subsequence, if $\limsup_{L\to\infty} \frac{ \cplx{x}(L) }{ \lvert q(L) \rvert }=\infty$ holds for every polynomial $q$.}
\end{tightitemize}
Note that in a minimal subshift, every finite word that appears in some element, appears in every element of the subshift, and hence \( \cplx{x} = \cplx{y} \) holds for all \( x , y \in X \).

Given two subshifts \( X \) and \( Y \), a surjective, continuous and shift-commuting map \( \Psi \colon X \to Y \) is called a \emph{factor map}. In this case, \( Y \) is called a \emph{factor} of \( X \), and \( X \) is called an \emph{extension} of \( Y \). By the theorem of Curtis/Lyndon/Hedlund (see for example \cite[Theorem~6.2.9]{LindMarcus_Coding}), every factor map \( \Psi \)  between subshifts is given by a \emph{sliding block code}, that is, there exist \( J \in \NN_{0} \) and \( \psi \colon \Alphab^{[ -J , J ]} \to \Alphab \) such that \( \Psi( x )( j ) = \psi( x[ j-J , j+J ] ) \) holds for all \( x \in X \)  and \( j \in \ZZ \). If a factor map \( \Psi \) is even bijective, then \( \Psi \) is called a \emph{topological conjugacy} and the subshifts are called \emph{topologically conjugated} (by some authors, the terms ``isomorphism'' and ``isomorphic subshifts'' are used instead). A factor \( Y \) of \( X \), which is neither topologically conjugated to \( X \) nor consists of a single point, is called a \emph{proper factor}. When \( \Psi \colon X \to Y \) is not necessarily bijective, but there is a dense subset $Y_1 \subseteq Y$ such that $\Psi^{-1}(y)$ is a singleton for all $y\in Y_1$, then \( \Psi \), \( Y \) and \( X \) are respectively called  an \emph{almost 1-to-1} map, an \emph{almost 1-to-1 factor} and an \emph{almost 1-to-1 extension}. If $Y$ is minimal, then this is equivalent to the existence of a single $y\in Y$ such that $\Psi^{-1}(y)$ is a singleton.

We will also consider group rotations, that is, a group \( G \) with the action \( \Rot \colon G \to G \), \( g^{\prime} \mapsto g^{\prime} + g \) for a fixed \( g \in G \). A topological group is called monothetic with generator \( g \), if the subgroup \( \langle g \rangle \subseteq G \) is dense. In this case, we always consider \( G \) together with the rotation by a generator. A factor map from a subshift to a group rotation is a continuous, surjective map \( \Psi \colon X \to G \) with \( \Psi \circ \Shift = \Rot \circ \Psi \). For every minimal subshift \( X \), there exists a compact, metrizable, monothetic group \( G \) with a generator \( g \), and a factor map \( \FactOdo_{X} \colon ( X , \Shift ) \to ( G , \Rot) \) such that every factor map from \( X \) to any compact, metrizable, monothetic group \( \widetilde{G} \) factors through \( \FactOdo_{X} \) (see for example \cite[Section~1]{Paul_AlmAutomFlows} and \cite[Theorem~1]{EllisGottsch_HomoTransGr}). The group \( G \) is then called the \emph{maximal equicontinuous factor} of \( X \). Note that the minimality of \( ( X , \Shift ) \) and the properties of the factor map imply the minimality of \( ( G , \Rot) \). 

A minimal subshift \( X \) that is an almost 1-to-1 extension of its maximal equicontinuous factor, is called an \emph{almost automorphic subshift}. The points \( x_{0} \in X \) with \( \FactOdo_{X}^{-1}( \FactOdo_{X}( x_{0} ) ) = \{ x_{0} \} \) are called \emph{almost automorphic points}. In almost automorphic subshifts, aperiodicity is equivalent to an infinite maximal equicontinuous factor. (Indeed, if \( G \) is finite, then it has to be discrete, since it is metrizable. In particular, \(\FactOdo_{X}\) is 1-to-1 on every element of G, and hence X is finite.) A factor subshift \( Y \) of an almost automorphic subshift \( X \) is again almost automorphic, see \cite[Theorem~3.2]{DownDur_FactToepl} or \cite[Proposition~9.9, and Theorem~9.13]{Fursten_RecurrETCombNT} (note also \cite[Proposition~9.14]{Fursten_RecurrETCombNT}, which relates Furstenberg's definition of almost automorphic points to ours). Moreover, in this situation the maximal equicontinuous factor of \( Y \) is a factor of the maximal equicontinuous factor of \( X \).

In this article, we will be especially interested in situations where an almost automorphic subshift \( X \) and its factor subshift \( Y \) have the same maximal equicontinuous factor \( \Odo \). In this case there exists a rotation \( \gamma \colon ( \Odo , \Rot ) \to ( \Odo , \Rot ) \) with \( \FactOdo_{X} = \gamma \circ \FactOdo_{Y} \circ \Psi \), see for example \cite[Section~2]{DownKwiatLac_CritIsomAppl}. Since \( \FactOdo_{X} \) is an almost 1-to-1 map, so is \( \FactOdo_{Y} \circ \Psi \colon X \to \Odo\). In particular, also \( \Psi \) is an almost 1-to-1 map, or in other words: when \( X \) and its factor subshift \( Y \) have the same maximal equicontinuous factor, then \( X \) is an almost 1-to-1 extension of \( Y \). Moreover, in this case the almost automorphic points of \( X \) provide good control over the almost automorphic points of \( Y \); see Proposition~\ref{prop:BndryFactor} for details. Finally, we also note that the aperiodicity of \( X \) then implies that \( Y \) is aperiodic as well, since they have the same infinite maximal equicontinuous factor.

Let now \( \Metric \) denote a metric on \( \Alphab^{\ZZ} \) that is compatible with the topology. Two words \( x_{1} , x_{2} \in \Alphab^{\ZZ} \) are called a \emph{proximal pair} if they satisfy
\[
\liminf_{ n \to -\infty } \Metric( \Shift^{n}( x_{1} ) , \Shift^{n}( x_{2} ) ) = 0 .
\]
They are called an \emph{asymptotic pair} if they even satisfy the stronger condition
\[
\lim_{ n \to -\infty } \Metric( \Shift^{n}( x_{1} ) , \Shift^{n}( x_{2} ) ) = 0  ,
\]
or equivalently: if \( x_{1} \) and \( x_{2} \) are equal on a half-line towards minus infinity. A pair that is proximal but not asymptotic is called a \emph{Li-Yorke pair}. Subshifts without Li-Yorke pairs are called \emph{almost distal}, since they share various properties of distal systems, see \cite[Section~3]{BGKM_LiYorke}. Note that notations in the literature vary -- while we consider negatively proximal and asymptotic pairs, there are also the notions of positively and of two-sided pairs (in which \( \lim_{ n \to -\infty } \) is replaced by \( \lim_{ n \to \infty } \) and \( \lim_{ \lvert n \rvert \to \infty } \), respectively) and of pairs that are proximal in at least one direction. We will discuss this briefly in connection with Proposition~\ref{prop:ProxiBndry} and in Section~\ref{subsec:Li-Yorke}. For related notions such as mean proximality (where the limit of the average distance in \( [ -n , n ] \) is considered), and for their relation to entropy, see for example \cite{DownLacr_ForwMProxi}.

Clearly, whenever \( x_{1} , x_{2} \) are a proximal or asymptotic pair, then so is every finite shift \( \Shift^{n}( x_{1} ) , \Shift^{n}( x_{2} ) \). Therefore, we will often consider proximal and asymptotic relations between orbits: similar to the asymptotic case in \cite[Section~3]{DonoDuraMaaPeti-AutoGrLowCompl}, we say that \( \Orb( x_{1} ) \) and \( \Orb( x_{2} ) \) are proximal (respectively asymptotic), if there are \( x_{1}^{\prime} \in \Orb( x_{1} ) \) and \( x_{2}^{\prime} \in \Orb( x_{2} ) \) which are proximal (respectively asymptotic), or equivalently: if there exists \( n \in \ZZ \) such that \( x_{1} , \Shift^{n}( x_{2} ) \) are a proximal (asymptotic) pair. In general, we cannot expect proximality of orbits to be an equivalence relation. For example, a word \( x_{1} \) which contains arbitrarily long sequences of \( a \)'s and of \( b \)'s, is proximal to the constant sequences \( x_{2} \DefAs \hdots a a a \hdots \) and \( x_{3} \DefAs \hdots b b b \hdots \), but \( \Orb( x_{2} ) = \{ x_{2} \} \) and \( \Orb( x_{3} ) = \{ x_{3} \} \) are clearly not proximal to each other. However, in almost automorphic subshifts proximality of orbits is indeed an equivalence relation, as we will see in Corollary~\ref{cor:ProxiOrbEquiv}. Still following \cite{DonoDuraMaaPeti-AutoGrLowCompl}, we will then call the non-trivial equivalence classes \emph{proximal components}.

\subsection{Toeplitz subshifts and odometers}
\label{subsec:PrelimToepl}

An infinite word \( x \in \Alphab^{\ZZ} \) is called a \emph{Toeplitz word} or \emph{Toeplitz sequence} if it satisfies
\begin{equation}
\label{eqn:DefToepl}
\forall \; j \in \ZZ \quad \exists \; p \in \NN \quad \forall \; n \in \NN : \;\; x( j ) = x ( j + np ) . 
\end{equation}
We denote its orbit closure by \( X_{x} \DefAs \Close{\{ \Shift^{n}( x ) : n \in \ZZ \}} \) and call it a \emph{Toeplitz subshift}. We remark that, while the definition of Toeplitz words includes periodic words as a special case, we are usually not interested in this case (and will sometimes even exclude it). All Toeplitz subshifts are minimal \cite[Theorem~4]{JacobsKeane_01Toeplitz}. In every non-periodic Toeplitz subshift there are elements without property~\eqref{eqn:DefToepl}, that is, elements which are not Toeplitz words (just recall from the introduction that every non-periodic subshift contains a non-trivial asymptotic pair; such a pair cannot be formed by two Toeplitz words, since those would differ on a set with bounded gaps). Following \cite{BJL_ToeplModelSet}, we call those elements \emph{Toeplitz orbitals}, but the reader should be warned that the definitions in the literature vary. For \( p \in \NN \), we denote by
\[
\Per( p , x , a ) \DefAs \{ j \in \ZZ : x( j + n p ) = a \text{ for all } n \in \ZZ \}
\]
the set of all \( p \)-periodic positions with value \( a \) in \( x \), and we write \( \Per( p , x ) \DefAs \bigcup_{a \in \Alphab} \Per( p , x , a ) \) for the set of all \( p \)-periodic positions in \( x \). A \emph{period structure} of a Toeplitz word (see \cite[Section~2]{Wil_ToepNotUniqErgod}) is a sequence \( (p_{l})_{l\in \NN} \) with
\begin{tightenumerate}
\item{\label{item:PerStruc:Div}\( p_{l} \mid p_{l+1} \)},
\item{\label{defi:PerStruc:EssPeriod}\( \emptyset \neq \Per( p_{l} , x ) \neq \Per( p , x ) \) for all \( 0 < p < p_{l} \)},
\item{\label{item:PerStruc:Z}\( \bigcup_{ l \in \NN } \Per( p_{l} , x ) = \ZZ \)}.
\end{tightenumerate}
When necessary, we will additionally set \( p_{0} \DefAs 1 \). A period structure exists for every Toeplitz word, but note that period structures are not unique (for instance, every subsequence is again a period structure). If the density \( \frac{ \Card{ \Per( p_{l} , x ) \cap [ 0, p_{l}-1 ] } }{ p_{l} } \) of \( p_{l} \)-periodic positions converges to \( 1 \) for \( l \to \infty \), the Toeplitz subshift is called \emph{regular}, otherwise \emph{irregular}. We remark that a number \(p_{l} \in \NN \) with property~\ref{defi:PerStruc:EssPeriod} from above is called an \emph{essential period} of \( x \), and that various different but equivalent definitions of this property exist (see \cite[Section~2.3.1]{DKK-P_Minimality} for an overview). We write \( \Aper( p_{l} , x ) \DefAs \ZZ \setminus \Per( p_{l} , x ) \) for the positions in \( x \) that are not \( p_{l} \)-periodic. These positions are called \emph{\( p_{l} \)-holes}. Because of \( \Per( p_{l} , x ) \subseteq \Per( p_{l+1} , x ) \), they form a decreasing sequence of sets and we denote its limit by \( \Aper( x ) \DefAs \ZZ \setminus \bigcup_{l \in \NN} \Per( p_{l} , x ) \). Notice that \( \Aper( x ) \) is non-empty if and only if \( x \) is a Toeplitz orbital. Finally, we remark that for every \( y \in X_{x} \) and for every \( p_{l} \) in a period structure of \( x \), there exists a unique integer \( k = k( y , p_{l} ) \in [0, p_{l}-1] \) such that \( y \) and \( \Shift^{k}( x ) \) agree on their \( p_{l} \)-periodic parts, that is, for every \( a \in \Alphab \)
\begin{equation}
\label{eqn:UniqueShift}
\Per( p_{l} , y, a ) = \Per( p_{l} , \Shift^{k}( x  ) , a)
\end{equation}
holds (see for example \cite[Proposition~3.5]{Drwlo_StrictErgodToeplArbitEntro}, which is based on \cite[Proposition~7]{CortezPetite_GOdo}, or see \cite[Section~8]{Downa_OdomToepl}).

A commonly used technique to construct Toeplitz words is by successive \emph{hole-filling}. For this, we extend the alphabet by an additional symbol ``\( ? \)'' which represents a ``hole'', that is, a position that is not yet filled. We start with a sequence \( ( w_{n} )_{n \geq 1} \) of finite words with holes. We extend each \( w_{n} \) periodically to an infinite word \( w_{n}^{\infty} \in ( \Alphab \cup \{ ? \} )^{\ZZ} \). By \( w_{1}^{\infty} \triangleleft w_{2}^{\infty} \in (\Alphab \cup \{ ? \}) ^{\ZZ} \) we denote the infinite word where we insert \( w_{2}^{\infty} \) letter by letter into the \( ? \)-positions of \( w_{1}^{\infty} \). For instance, with \( w_{1} \DefAs  a??b \) and \( w_{2} \DefAs aa?a?bbb\) (see Example~\ref{ex:UnbdHolesOneBdry} below) we obtain
\begin{align*}
 w_{1}^{\infty} \triangleleft w_{2}^{\infty} = ( & \hdots \, a \, ? \, ? \, b \,  a \, ?\, ?\, b \, a \, ? \, ? \, b \, a \, ? \, ? \, b \, \hdots ) \triangleleft ( \hdots \, a \, a \, ? \, a \, ? \, b \, b \, b \, \hdots )\\
 =\hphantom{(} &\hdots \, a \, a \, a \, b \, a \, ? \, a \, b \, a \, ? \, b \, b \, a \, b \, b \, b \, \hdots \; .
 \end{align*}
Similarly, \( w_{1}^{\infty} \triangleleft w_{2}^{\infty} \triangleleft w_{3}^{\infty} \in (\Alphab \cup \{ ? \}) ^{\ZZ} \) denotes the element where we insert \( w_{3}^{\infty} \) letter by letter into the \( ? \)-positions of \( w_{1}^{\infty} \triangleleft w_{2}^{\infty} \), and so on. Since inserting a word that consists only of \( ? \)'s has no effect, we will assume that every \( w_{n} \) contains at least one letter from \( \Alphab \). Moreover, we choose the position of \( w_{n+1} \) that is filled into the first non-negative hole of \( w_{1}^{\infty} \triangleleft \hdots \triangleleft w_{n}^{\infty} \) in such a way, that all \( ? \)-positions around the origin of \( w_{1}^{\infty} \triangleleft \hdots \triangleleft w_{n}^{\infty} \) are successively filled. In this case, \( w_{1}^{\infty} \triangleleft \hdots \triangleleft w_{n}^{\infty} \) converges to a Toeplitz sequence. In our examples, we always use words \( w_{n} \) whose first and last letters are from \( \Alphab \), and we always fill the first letter of \( w_{n+1} \) into the first non-negative hole of \( w_{1}^{\infty} \triangleleft \hdots \triangleleft w_{n}^{\infty} \).

\paragraph{Toeplitz sequences with separated holes}
If the minimal distance between any two \( p_{l}\)-holes tends to infinity for \( l \to \infty \), we say that \( x \) has \emph{separated holes}. This notion was introduced in \cite{BuKwiat_TopolCentral} and covers interesting classes such as so-called simple Toeplitz subshifts (see for example \cite[Definition~1]{KamZamb_MaxPattCompl}) or, more general, Toeplitz subshifts with a single hole per period, that is, where \( \Aper( p_{l} , x ) \cap [ 0 , p_{l}-1 ] \) is a singleton. As the density of \( \Aper( p_{l} , x ) \) is at most one divided by the minimal distance of holes, it is clear that Toeplitz subshifts with separated holes are regular.

\paragraph{Generalised Oxtoby sequences}
In Toeplitz subshifts with separated holes, the holes get ``more and more isolated''. In contrast, generalised Oxtoby sequences exhibit ``persistent clusters'' of holes: we say that a Toeplitz sequence \( x \) is a \emph{generalised Oxtoby sequence} with respect to a period structure \( (p_{l}) \) of \( x \), if in every interval \( [k p_{l} , ( k+1 ) p_{l} - 1] \), with \( k \in \ZZ \), either all \(p_{l}\)-holes are filled \( p_{l+1} \)-periodically or none of them are, and there are at least two intervals per period that are not filled. More formally:
\begin{tightenumerate}
\item{\label{item:Oxto:All}for every \( l \in \NN \) and every \( k \in [ 0 , \frac{ p_{l+1} }{ p_{l} } - 1 ] \), the set \( \Aper( p_{l+1} , x ) \cap [k p_{l} , ( k+1 ) p_{l} - 1] \) is either empty or equal to \( \Aper( p_{l} , x ) \cap [k p_{l} , ( k+1 ) p_{l} - 1] \),}
\item{and for every \( l \in \NN \), there are at least two \( k \in [ 0 , \frac{ p_{l+1} }{ p_{l} } - 1 ] \) such that the set \( \Aper( p_{l+1} , x ) \cap [k p_{l} , ( k+1 ) p_{l} - 1] \) is non-empty.}
\end{tightenumerate}
Note that a Toeplitz sequence which is Oxtoby for one period structure need in general not be Oxtoby for other period structures (although there are examples which are Oxtoby sequences for all of their period structures). Note also that by the second condition above, Oxtoby sequences cannot be periodic. Moreover, since at least two \( p_{l-1} \)-blocks are not filled \( p_{l} \)-periodically, we have \( \Card{ \Aper( p_{l} , x ) \cap [k p_{l} , ( k+1 ) p_{l} - 1] } \geq 2^{l} \) for every \( l \in \NN\) and every \( k \in \ZZ \). In addition, generalised Oxtoby sequences never have separated holes, since each set \( \Aper( p_{l} , x ) \) contains an interval \( [k p_{1} , ( k+1 ) p_{1} - 1]  \) that has not been completely filled. This interval has at least two holes in it, which are therefore separated by less then \( p_{1} \) (we will prove a stronger version of this statement in Proposition~\ref{prop:OxtoNotB}). Oxtoby sequences, generalising an example of Oxtoby from \cite[Section~10]{Ox_ErgSets}, were originally introduced in \cite{Wil_ToepNotUniqErgod} with slightly different requirements; for details see Example~\ref{exmpl:WilliamsConstr} below. The generalised form presented here appeared under the name ``condition~(*)'' in \cite[Section~1]{BuKwiat_StrictErgod} and as ``generalized Oxtoby sequence'' in \cite[Definition~2]{DownKwiatLac_CritIsomAppl}.

\paragraph{Maximal equicontinuous factor}
Let \( x \) be a Toeplitz word and let \( (p_{l}) \) be a period structure of it. The \emph{odometer associated to \( x \)} is the inverse limit \( \Omega = \varprojlim \ZZ / p_{l}\ZZ \), that is, the set of all sequences \( \omega = ( \omega( 1 ) , \omega( 2 ) , \omega( 3 ) , \hdots ) \in \prod_{l=1}^{\infty} \ZZ / p_{l}\ZZ \) with \( \omega( l+1 ) \equiv \omega( l ) \mod p_{l} \). For \( \omega \in \Odo \) and \( l \in \NN \), the set \( [ \omega ]_{ l } \DefAs \{ \widetilde{\omega} \in \Omega : \widetilde{\omega}[ 1 , l  ] = \omega[ 1 , l ] \} \) is called a \emph{cylinder set}. By \( \Rot \colon \Omega \to \Omega \), \( \omega \mapsto \omega + ( 1 , 1 , 1 , \hdots ) \) we denote the rotation by \( ( 1 , 1 , 1 , \hdots ) \) on \( \Omega \), and we write \( \Orb( \omega ) \DefAs \{ \Rot^{n}( \omega ) : n \in \ZZ \} \) for the orbit of \( \omega \) under \( \Rot \). An alternative notation that is sometimes used in the literature, is to write the odometer as \( \prod_{l \in \NN} \ZZ / \frac{ p_{l} }{ p_{l-1} } \ZZ \) and consider the rotation by \( (1 , 0 , 0 , 0 , \hdots )\) with carry over. It is known that the odometer associated to \( x \) and \( (p_{l}) \) is an almost 1-to-1 factor, as well as the maximal equicontinuous factor, of the subshift \( X_{x} \), see \cite[Theorem~2.2 and Corollary~2.4]{Wil_ToepNotUniqErgod} or for example \cite[Theorem~7.4 and Section~6]{Downa_OdomToepl}. By~\eqref{eqn:UniqueShift}, for each \( y \in X_{x} \) and each \( p_{l} \), there is a uniquely determined shift by \( k = k( y , p_{l} ) \in [0 , p_{l}-1 ] \) that makes \( y \) and \( \Shift^{k}( x ) \) agree on their \( p_{l} \)-periodic parts. The factor map \( \FactOdo_{x} \colon X_{x} \to \Omega \) is given by \( \FactOdo_{x}( y ) \DefAs ( k( y , p_{1}) , k( y , p_{2}), k( y , p_{3}), \hdots ) \), see \cite[Section~8]{Downa_OdomToepl}. Note that, while a general factor map from a Toeplitz subshift \( X_{x} \) would be denoted as \( \FactOdo_{X_{x}} \), we write \( \FactOdo_{x} \) for the specific map that is defined with respect to the Toeplitz sequence \( x \). Note also that the associated odometer is defined in terms of a period structure, which is not unique. However, it follows from the above that the odometers corresponding to different period structures of a Toeplitz word are all isomorphic to each other, since they are all isomorphic to the maximal equicontinuous factor.

Just as factor subshifts of almost automorphic subshifts are again almost automorphic and have related maximal equicontinuous factors (see Section~\ref{subsec:Subshifts}), analogous results hold for the Toeplitz case. This is summarised in the following statement, which combines parts of \cite[Theorems~1.2, 1.3 and 11.1]{Downa_OdomToepl}. We write \( k_{s}( p )  \) for the largest exponent such that \( p^{k_{s}( p )} \) divides \( s \).

\begin{prop}[\cite{Downa_OdomToepl}]
\label{prop:OdoFactIso}
If \( ( X , \Shift ) \) is a Toeplitz subshift with maximal equicontinuous factor given by the odometer \( ( \Omega , \Rot ) \), then every factor subshift \( ( Y , \Shift ) \) of \( ( X , \Shift ) \) is again a Toeplitz subshift, and its maximal equicontinuous factor is a factor of \( ( \Omega , \Rot ) \). An odometer with scale \( ( s_{m} )_{m} \) is a factor of an odometer with scale \( ( t_{n} )_{n} \) if and only if, for every prime number \( p \), we have \( \lim_{m \to \infty} k_{s_{m}}( p ) \leq \lim_{n \to \infty} k_{t_{n}}( p ) \), where we consider the limits to be equal if they are both infinite. The odometers with scales \( ( s_{m} )_{m} \) and \( ( t_{n} )_{n} \) are isomorphic, if and only if, for every prime number \( p \), we have \( \lim_{m \to \infty} k_{s_{m}}( p ) = \lim_{n \to \infty} k_{t_{n}}( p ) \).
\end{prop}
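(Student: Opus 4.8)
The plan is to reduce all three assertions to elementary facts about inverse limits of finite cyclic groups, after first recording that factor maps between monothetic rotations are essentially group homomorphisms. Concretely, if $G$ and $H$ are compact metrizable monothetic groups with generators $g$ and $h$, equipped with the rotations by $g$ and $h$, and $\Psi\colon G\to H$ is a factor map, then $\phi\DefAs\Psi-\Psi(0)$ satisfies $\phi(ng)=nh$ for all $n\in\ZZ$; since $\langle g\rangle$ is dense and $\phi$ is continuous, $\phi$ is a continuous surjective homomorphism with $\phi(g)=h$. I would first use this to finish the opening assertion: by the facts recalled in Section~\ref{subsec:Subshifts}, the factor subshift $Y$ is again almost automorphic and its maximal equicontinuous factor $G$ is a factor of $\Odo$; writing $G\cong\Odo/N$ for a closed subgroup $N$ shows that $G$ is again a compact metrizable totally disconnected monothetic group, i.e.\ an odometer, so that $Y$ is a minimal almost $1$-to-$1$ extension of an odometer and hence a Toeplitz subshift (\cite{Downa_OdomToepl}).

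For the factor criterion I would work throughout with the scales as divisibility sequences, so that $k_{t_n}(p)$ is non-decreasing in $n$ and $\lim_{n}k_{t_n}(p)=\sup_n k_{t_n}(p)$. The decisive intermediate claim is that $\Omega_s$ is a factor of $\Omega_t$ if and only if for every $m$ there exists $n$ with $s_m\mid t_n$. For the backward implication I would assemble the reductions $\ZZ/t_n\ZZ\to\ZZ/s_m\ZZ$, $1\mapsto 1$, into a morphism of inverse systems, producing a continuous homomorphism $\Omega_t\to\Omega_s$ with generator mapping to generator; its image is compact and contains the dense subgroup generated by $\mathbf 1$, hence equals $\Omega_s$, so it is a factor map. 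For the forward implication I would use the homomorphism $\phi$ from the first paragraph: composing with the projection $\Omega_s\to\ZZ/s_m\ZZ$ gives a continuous map from $\Omega_t$ into a finite discrete group, which by compactness factors through some finite level $\ZZ/t_n\ZZ$; the resulting homomorphism $\ZZ/t_n\ZZ\to\ZZ/s_m\ZZ$ sends $1$ to $1$, which forces $s_m\mid t_n$.

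It then remains to translate the condition ``for every $m$ there is $n$ with $s_m\mid t_n$'' into the stated inequality of $p$-adic limits. One direction is immediate, since $s_m\mid t_n$ gives $k_{s_m}(p)\le k_{t_n}(p)\le\lim_n k_{t_n}(p)$ for every prime $p$, and taking the supremum over $m$ yields $\lim_m k_{s_m}(p)\le\lim_n k_{t_n}(p)$. The converse is the step I expect to be the main obstacle, because it requires producing a single index $n$ that works for all primes simultaneously: fixing $m$, the word $s_m$ has only finitely many prime divisors $p$, for each of them the inequality $k_{s_m}(p)\le\lim_n k_{t_n}(p)=\sup_n k_{t_n}(p)$ supplies some $n_p$ with $k_{t_{n_p}}(p)\ge k_{s_m}(p)$, and then monotonicity of the scale lets me take $n\DefAs\max_p n_p$ to obtain $s_m\mid t_n$. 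The cases where one or both limits are infinite need a little care but are covered by the same supremum argument.

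Finally, for the isomorphism statement I would combine both directions of the factor criterion. Equality of the limits for all $p$ gives factor maps $\phi\colon\Omega_t\to\Omega_s$ and $\psi\colon\Omega_s\to\Omega_t$, realised as continuous homomorphisms sending generator to generator. Then $\psi\circ\phi$ is a continuous endomorphism of $\Omega_t$ fixing the generator, hence fixing the dense subgroup it generates, so $\psi\circ\phi=\mathrm{id}$ by continuity, and symmetrically $\phi\circ\psi=\mathrm{id}$; thus $\phi$ is an isomorphism. The reverse implication is immediate, since an isomorphism is in particular a factor map in both directions, so both inequalities of $p$-adic limits hold and equality follows for every prime.
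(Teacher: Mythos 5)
Your argument is correct, but note that the paper does not prove this proposition at all: it is stated as a quotation, explicitly assembled from Theorems~1.2, 1.3 and~11.1 of Downarowicz's survey \cite{Downa_OdomToepl}, so there is no in-paper proof to compare against. Your reconstruction is a sensible self-contained route. The key observation that a factor map between monothetic rotations becomes, after subtracting \( \Psi(0) \), a continuous surjective homomorphism carrying generator to generator is exactly the right reduction, and from there the two directions of the divisibility criterion (assembling reductions \( \ZZ/t_{n}\ZZ \to \ZZ/s_{m}\ZZ \) into a morphism of inverse systems, versus factoring a map into a finite discrete group through a finite level by compactness) are both sound; the translation between ``for every \( m \) there is \( n \) with \( s_{m} \mid t_{n} \)'' and the prime-by-prime inequality of limits, including the finitely-many-primes/maximum-of-indices step you correctly identify as the only delicate point, is also fine, as is the \( \psi \circ \phi = \mathrm{id} \) argument for the isomorphism statement. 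Two small remarks. First, the opening assertion (that \( Y \) is again Toeplitz) is the one place where your proof, like the paper, ultimately outsources the substance: you invoke the characterisation of Toeplitz subshifts as minimal symbolic almost 1-to-1 extensions of odometers, which is itself one of the cited theorems; you should also say a word on why a quotient of a (pro-cyclic) odometer by a closed subgroup is again an odometer, which is true but deserves the one-line profinite-quotient argument. Second, in the backward direction of the factor criterion you should choose the indices \( n(m) \) monotonically so that the reductions genuinely form a morphism of inverse systems; this is trivially arranged but worth stating.
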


\subsection{Separating covers and related notions}
\label{subsec:SepCover}

We recall different approaches of how almost automorphic subshifts can be defined by  prescribing the induced behaviour on their maximal equicontinuous factor. We start by outlining how these subshifts are obtained in \cite{Paul_AlmAutomFlows}: given a compact metrizable monothetic group \( G \) with rotation \(\Rot \colon G \to  G\) by a generator, a finite cover of closed sets \( C_{0} , \hdots , C_{m-1} \subseteq G \) is called a \emph{separating cover} if
\begin{tightitemize}
\item{each \( C_{a} \) is regular, that is, equal to the closure of its interior,}
\item{the interiors of \( C_{a} \) and \( C_{b} \) are disjoint for all \( a \neq b \),}
\item{for all distinct \( g_{1} , g_{2} \in G \) there exists \( n \in \ZZ \) such that \( \Rot^{n}( g_{1} ) \) and \( \Rot^{n}( g_{2} ) \) lie in the interiors of distinct \( C_{a} \)'s.}
\end{tightitemize}
The cover's boundary is denoted by \( B \DefAs \bigcup_{a\neq b} ( C_{a} \cap C_{b} ) \). Let now \( h \in G \) be an element whose \(\Rot\)-orbit is disjoint from \( B \) (since each \( C_{a} \cap C_{b} \) is nowhere dense, such an element exists by a Baire category argument, cf.\@ \cite[Proposition~2.4]{Paul_AlmAutomFlows}). We define an infinite word \( x \in \{ 0 , \hdots  , m-1 \}^{ \ZZ} \) by setting \( x( j ) \DefAs a \) for \( j \in \ZZ \) if \( \Rot^{j}( h ) \in C_{a} \). In the following statement we summarise various results from \cite[Sections~1 and~2]{Paul_AlmAutomFlows}, but see \cite[Section~1]{Markley_CharSeq} as well for similar results in the case of \( \Card{ \Alphab } = 2 \).

\begin{prop}[\cite{Paul_AlmAutomFlows}]
\label{prop:PaulAlmAutoMef}
The orbit closure \( X \DefAs \Close{\{ \Shift^{n}( x ) : n \in \ZZ \}} \) is a minimal subshift over the alphabet \( \Alphab = \{ 0 , \hdots  , m-1 \} \), and its maximal equicontinuous factor is isomorphic to \( ( G , \Rot) \). More precisely, for every \( y \in X \) there exists a unique \( h \in G \) such that \( y \) encodes the \( C_{a} \)'s along the orbit of \( h \), that is, such that \( y( j ) = a \) implies \( \Rot^{j}( h ) \in C_{a} \). The map \( \FactOdo_{X} \colon X \to G \) that sends \( y \) to this unique element is a factor map from the subshift to its maximal equicontinuous factor. A point \( y \in X\) satisfies \( \FactOdo_{X}^{-1}( \FactOdo_{X}( y ) ) = \{ y \} \) if and only if \( \Orb( \FactOdo_{X}( y ) ) \cap B = \emptyset \) holds. Moreover, two elements \( y_{1} , y_{2} \) are proximal in at least one direction if and only if \( \FactOdo_{X}( y_{1} ) = \FactOdo_{X}( y_{2} ) \) holds. 
\end{prop}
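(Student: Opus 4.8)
The plan is to construct the factor map \( \FactOdo_{X}\colon X \to G \) explicitly from the coding and then read off all four assertions from its fibre structure. For every \( g \in G \) whose \( \Rot \)-orbit avoids \( B \), each \( \Rot^{j}( g ) \) lies in the interior of exactly one \( C_{a} \), so one gets a well-defined code \( c( g ) \in \Alphab^{\ZZ} \) by declaring \( c( g )( j ) \DefAs a \) whenever \( \Rot^{j}( g ) \in \Int{C_{a}} \); by construction \( x = c( h ) \) and \( c \circ \Rot = \Shift \circ c \). For a general \( y \in X \) I would take a sequence \( \Shift^{n_{k}}( x ) \to y \) and, using compactness of \( G \), pass to \( \Rot^{n_{k}}( h ) \to h' \). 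If \( y( j ) = a \), then \( \Rot^{j}( \Rot^{n_{k}}( h ) ) \in C_{a} \) for large \( k \), and since \( C_{a} \) is closed and \( \Rot^{j} \) is continuous, \( \Rot^{j}( h' ) \in C_{a} \); thus \( y \) encodes the orbit of \( h' \). Uniqueness of such an \( h' \) is exactly where the third (separating) axiom enters: two elements encoding the same \( y \) would, at some coordinate, be forced into disjoint interiors of distinct \( C_{a} \)'s, contradicting that each satisfies \( y( j ) = a \Rightarrow \Rot^{j}( \cdot ) \in C_{a} \). This gives existence and uniqueness of \( h \) and lets me set \( \FactOdo_{X}( y ) \DefAs h' \).

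Next I would verify that \( \FactOdo_{X} \) is a factor map. Shift-commutation \( \FactOdo_{X} \circ \Shift = \Rot \circ \FactOdo_{X} \) is immediate from the defining implication; surjectivity follows since the image is compact and contains the dense orbit \( \Orb( h ) = \{ \FactOdo_{X}( \Shift^{n}( x ) ) \} \); and continuity follows from the same closedness-and-uniqueness argument applied to a convergent sequence \( y_{k} \to y \) (every convergent subsequence of \( \FactOdo_{X}( y_{k} ) \) has a limit encoding \( y \), hence equal to \( \FactOdo_{X}( y ) \)). A crucial special fibre is \( \FactOdo_{X}^{-1}( h ) = \{ x \} \): as \( \Orb( h ) \cap B = \emptyset \), each \( \Rot^{j}( h ) \) lies in a single \( C_{a} \), so any preimage of \( h \) is forced coordinatewise to equal \( x \). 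Minimality of \( X \) then drops out: any nonempty closed \( \Shift \)-invariant \( Z \subseteq X \) satisfies \( \FactOdo_{X}( Z ) = G \) by minimality of \( ( G , \Rot ) \), hence \( x \in Z \) and so \( Z = X \). Finally, the singleton fibre over the dense set \( \Orb( h ) \) makes \( \FactOdo_{X} \) almost 1-to-1, so \( \FactOdo_{X} \) factors through the maximal equicontinuous factor via an intertwining map \( \phi \) of rotations; the singleton fibre over \( h \) forces a singleton fibre of \( \phi \), and since \( \phi \) is affine its fibres are cosets, so \( \phi \) is injective and thus an isomorphism, identifying \( ( G , \Rot ) \) as the maximal equicontinuous factor.

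For the fibre characterisation, the ``if'' direction repeats the argument for \( h \): when \( \Orb( \FactOdo_{X}( y ) ) \cap B = \emptyset \), each coordinate of any preimage is forced, so the fibre is \( \{ y \} \). For the contrapositive of ``only if'', suppose \( \Rot^{n}( \FactOdo_{X}( y ) ) \in C_{a} \cap C_{b} \) with \( a \neq b \); since fibre cardinalities are preserved by \( \Shift \), it suffices to produce two distinct preimages of \( g' \DefAs \Rot^{n}( \FactOdo_{X}( y ) ) \). Here regularity of the cover is essential: because \( C_{a} = \Close{ \Int{C_{a}} } \), the dense orbit \( \Orb( h ) \) meets every neighbourhood of \( g' \) inside \( \Int{C_{a}} \), giving indices \( m_{k} \) with \( \Rot^{m_{k}}( h ) \to g' \) and \( \Shift^{m_{k}}( x )( 0 ) = a \); a convergent subsequence of \( \Shift^{m_{k}}( x ) \) yields \( y_{1} \in X \) with \( y_{1}( 0 ) = a \) and \( \FactOdo_{X}( y_{1} ) = g' \), and symmetrically \( y_{2} \) with \( y_{2}( 0 ) = b \). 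As \( a \neq b \), these are distinct points of the same fibre, which is therefore not a singleton.

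The proximality statement is where the group structure does the decisive work, and I expect the converse to be the main obstacle. If \( \FactOdo_{X}( y_{1} ) = \FactOdo_{X}( y_{2} ) = g \), I would choose \( n_{k} \to + \infty \) with \( \Rot^{n_{k}}( g ) \to h \) (forward orbits are dense by minimality); then, by compactness and continuity of \( \FactOdo_{X} \), both \( \Shift^{n_{k}}( y_{1} ) \) and \( \Shift^{n_{k}}( y_{2} ) \) converge to the unique preimage \( x \) of \( h \), so \( \Metric( \Shift^{n_{k}}( y_{1} ) , \Shift^{n_{k}}( y_{2} ) ) \to 0 \) and the pair is proximal in the forward direction. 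Conversely, if along some \( n_{k} \to \pm \infty \) the distance tends to \( 0 \), then \( \Rot^{n_{k}}( \FactOdo_{X}( y_{1} ) ) \) and \( \Rot^{n_{k}}( \FactOdo_{X}( y_{2} ) ) \) share a limit; but as \( G \) is abelian their difference is the constant \( \FactOdo_{X}( y_{1} ) - \FactOdo_{X}( y_{2} ) \), which must therefore vanish. The delicate point to handle carefully is exactly this interplay between the topological input (a common limit, via compactness and continuity of \( \FactOdo_{X} \)) and the algebraic input (translations preserve differences), together with ensuring the approximating indices can be taken to run to a genuine one-sided infinity.
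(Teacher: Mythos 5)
Your proposal is correct, but there is nothing in the paper to compare it against: the paper states this proposition as a summary of results imported from \cite{Paul_AlmAutomFlows} (with \cite{Markley_CharSeq} for the binary case) and gives no proof of its own. What you have written is a sound, self-contained reconstruction of the standard arguments: coding limits of \( \Rot^{n_{k}}( h ) \) to define \( \FactOdo_{X} \), the singleton fibre over \( h \) to obtain minimality and the almost 1-to-1 property, the universal property together with affineness of factor maps between monothetic group rotations to identify \( ( G , \Rot ) \) as the maximal equicontinuous factor, regularity of the \( C_{a} \)'s to manufacture two distinct preimages over a boundary orbit, and the constancy of differences under rotation for the converse of the proximality statement. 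Two steps deserve to be made explicit, though neither is a gap in substance. First, in the uniqueness argument the contradiction is not merely that the two candidate group elements land in disjoint interiors: you need \( \Int{C_{a}} \cap C_{c} = \emptyset \) for \( c \neq a \), and this uses regularity (if some \( g \in \Int{C_{a}} \cap C_{c} \), then \( C_{c} = \Close{\Int{C_{c}}} \) forces \( \Int{C_{a}} \cap \Int{C_{c}} \neq \emptyset \)); disjointness of the interiors alone does not exclude \( \Rot^{n}( h^{\prime} ) \in \Int{C_{a}} \cap C_{b} \). Second, the assertion that the intertwining map \( \phi \) from the maximal equicontinuous factor onto \( G \) is affine (so that its fibres are cosets and one singleton fibre forces injectivity) is standard but should be justified: \( \phi - \phi( 0 ) \) restricts to the homomorphism \( n g_{1} \mapsto n g_{2} \) on the dense cyclic subgroup and extends to a continuous homomorphism by density. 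With these two points spelled out, the proof is complete.
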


In fact, the proof of \cite[Proposition~1.2]{Paul_AlmAutomFlows} shows that proximality of \( y_{1}, y_{2} \) in at least one direction is equivalent to \( y_{1} , y_{2} \) being positively proximal, equivalent to \( y_{1} , y_{2} \) being negatively proximal, and hence also equivalent to \( y_{1} , y_{2} \) being two-sided proximal (because minimal systems have dense forward and backward orbits). Moreover, \cite[Theorem~2.6]{Paul_AlmAutomFlows} establishes that every almost automorphic subshift can be obtained from a suitable separating cover: for \( X \subseteq \{ 0 , \hdots , m-1 \}^{\ZZ} \), the sets 
\begin{equation}
\label{eq:DefSepCover}
C_{a} \DefAs \FactOdo_{X}( \{ x \in X : x(0) = a \} ) \subseteq \Odo \quad\quad (a = 0 , \hdots , m-1 )
\end{equation}
form a separating cover of the subshift's maximal equicontinuous factor. The subshift obtained from the \( C_{a}\)'s as described above (that is, as closure of an orbit that does not project to the boundary) is then precisely \( X\). We write \( B_{X} \) for the cover's boundary and note that the definition of the separating cover immediately yields the equivalence
\begin{equation}
\begin{aligned}
\label{eq:BndryValues}
& \omega \in B_{X} \\
\!\! \Longleftrightarrow \;\; & \exists \, x_{1} , x_{2} \in \FactOdo_{X}^{-1}( \omega ) \;\; \exists \, a \neq b \in \{ 0 , \hdots , m-1 \} : \,  x_{1}( 0 ) = a , \, x_{2}( 0 ) = b .
\end{aligned}
\end{equation}

\paragraph{Semicocycles}
In the case of Toeplitz subshifts, the concept of separating covers is sometimes expressed in the language of semicocycles. We briefly sketch this approach below, and refer to \cite[Section~5]{DownDur_FactToepl} and \cite[Section~6]{Downa_OdomToepl} for more information. Let hence \( x \) be a Toeplitz word and let \( \Odo \) denote the associated odometer, that is, the maximal equicontinuous factor of \( X_{x} \). We denote by \( \Embed \colon \ZZ \to \Odo \, , \,\, j \mapsto ( j \bmod p_{1} , j \bmod p_{2}  , j \bmod p_{3}  , \hdots ) = \Rot^{j}( (0 , 0 , 0 , \hdots) ) \) the embedding of the integers into the odometer, and we equip \( \Embed( \ZZ ) \subseteq \Omega \) with the induced topology. A \emph{semicocycle} is a continuous map from \( \Embed( \ZZ ) \) to a compact metric space. Specifically, we denote by \( \tau_{x} \)  the semicocycle \( \tau_{x}  \colon \Embed( \ZZ ) \to \Alphab \), \( \Embed( j ) \mapsto x( j ) \). Note that the continuity of \( \tau_{x} \) follows from property~\ref{item:PerStruc:Z} of the definition of a period structure, since for every \( j \in \ZZ \) the value of \( x \) is constant on \( j + p_{l} \ZZ \) for a sufficiently large \( p_{l} \) (and hence \( \tau_{x} \) is constant on the cylinder set \( [ \Embed( j ) ]_{l} \)). Following \cite{Downa_OdomToepl}, we denote by \( F_{x} \subseteq \Omega \times \Alphab \) the closure of the graph of \( \tau_{x} \). Moreover, we write \( F_{x}( \omega ) \DefAs \{ (\omega , a) \in F_{x} : a \in \Alphab \} \) for the set of points in \( F_{x} \) at \( \omega \in \Omega \). Note that \( F_{x}(\omega) \) is a singleton for every \( \omega \in \Embed( \ZZ ) \) by the continuity of \( \tau_{x} \) on \( \Embed( \ZZ ) \). The sets
\[ C_{a} \DefAs \{ \omega \in \Omega : ( \omega , a ) \in F_{x} \} \, , \;\text{with } a \in \Alphab \!, \]
form a separating cover, whose boundary \( B_{X} = \bigcup_{a\neq b} ( C_{a} \cap C_{b} ) \subseteq \Omega \setminus \Embed( \ZZ )  \) consists precisely of those \( \omega \in \Omega \) for which \( F_{x}(\omega) \) is not a singleton (also called the  \emph{set of discontinuities of \( \tau_{x} \)}). Indeed, it is easy to check the following properties.
\begin{prop}[Properties~(A2) and (A1) in {\cite[Section~8]{Downa_OdomToepl}}]
\label{prop:AperDx}
\leavevmode
\begin{tightenumerate}
\item{\label{item:A2}For \( \omega \in \Odo \) and \( a \in \Alphab \) we have: \( ( \omega , a) \in F_{x} \Longleftrightarrow \exists \, y \in \FactOdo_{x}^{-1}( \omega ) \) with \( y( 0 ) = a \).}
\item{\label{item:A1}For each \( y \in X_{x} \) we have \( \Aper( y ) = \{ n \in \ZZ : \Rot^{n}( \FactOdo_{x}( y ) ) \in B_{X} \} \).}
\end{tightenumerate}
\end{prop}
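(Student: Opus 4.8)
The plan is to establish part~\ref{item:A2} first, directly from the definitions of \( F_{x} \) and \( \FactOdo_{x} \), and then to deduce part~\ref{item:A1} from it together with the defining property~\eqref{eqn:UniqueShift} of \( \FactOdo_{x} \). The recurring observation is that \( \FactOdo_{x}( x ) = ( 0 , 0 , \ldots ) \) (as \( x \) agrees with \( \Shift^{0}( x ) \) on all periodic parts), so that \( \FactOdo_{x}( \Shift^{n}( x ) ) = \Rot^{n}( \FactOdo_{x}( x ) ) = \Embed( n ) \) and \( \Shift^{n}( x )( 0 ) = x( n ) = \tau_{x}( \Embed( n ) ) \). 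In other words, the graph of \( \tau_{x} \) is precisely \( \{ ( \FactOdo_{x}( \Shift^{n}( x ) ) , \Shift^{n}( x )( 0 ) ) : n \in \ZZ \} \), which links \( F_{x} \) to the shifts of \( x \).

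For part~\ref{item:A2} I would argue by compactness in both directions. If \( ( \omega , a ) \in F_{x} \), then by definition of the graph closure there are integers \( n_{i} \) with \( \Embed( n_{i} ) \to \omega \) and \( x( n_{i} ) \to a \); as \( \Alphab \) is discrete, \( x( n_{i} ) = a \) for large \( i \). A convergent subsequence of \( ( \Shift^{n_{i}}( x ) )_{i} \) in the compact space \( X_{x} \) yields a limit \( y \) with \( y( 0 ) = a \) and, by continuity of \( \FactOdo_{x} \), with \( \FactOdo_{x}( y ) = \lim \Embed( n_{i} ) = \omega \). Conversely, if \( y \in \FactOdo_{x}^{-1}( \omega ) \) satisfies \( y( 0 ) = a \), then \( y \in X_{x} = \overline{ \Orb( x ) } \) gives \( \Shift^{n_{i}}( x ) \to y \) for suitable \( n_{i} \); hence \( x( n_{i} ) \to a \) and \( \Embed( n_{i} ) = \FactOdo_{x}( \Shift^{n_{i}}( x ) ) \to \omega \), so the graph points \( ( \Embed( n_{i} ) , a ) \) converge to \( ( \omega , a ) \in F_{x} \).

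For part~\ref{item:A1} I would first rewrite the right-hand side using part~\ref{item:A2}. Setting \( \omega \DefAs \FactOdo_{x}( y ) \) and using the identity \( \FactOdo_{x}^{-1}( \Rot^{n}( \omega ) ) = \Shift^{n}( \FactOdo_{x}^{-1}( \omega ) ) \) (which holds since \( \FactOdo_{x} \) is shift-commuting), the statement \( \Rot^{n}( \omega ) \in B_{X} \), i.e.\@ that \( F_{x}( \Rot^{n}( \omega ) ) \) is not a singleton, is by part~\ref{item:A2} equivalent to the existence of \( z \in \FactOdo_{x}^{-1}( \omega ) \) with \( z( n ) \neq y( n ) \) (since \( y \) already lies in the fibre). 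It therefore remains to show that such a \( z \) exists exactly when \( n \in \Aper( y ) \). The ``periodic'' direction is immediate from~\eqref{eqn:UniqueShift}: if \( n \in \Per( p_{l} , y ) \) for some \( l \), then every \( z \in \FactOdo_{x}^{-1}( \omega ) \) has \( k( z , p_{l} ) = k( y , p_{l} ) \), hence the same \( p_{l} \)-periodic positions and values as \( y \), forcing \( z( n ) = y( n ) \); so no disagreeing \( z \) exists and \( \Rot^{n}( \omega ) \notin B_{X} \).

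The main obstacle is the converse: producing a disagreeing fibre element when \( n \in \Aper( y ) \). In this case, for every \( l \) the position \( n \) is not \( p_{l} \)-periodic, so \( y \) is non-constant on \( n + p_{l} \ZZ \) and there is \( m_{l} \in \ZZ \) with \( y( n + m_{l} p_{l} ) \neq y( n ) \). I would set \( z_{l} \DefAs \Shift^{ m_{l} p_{l} }( y ) \), so that \( z_{l}( n ) = y( n + m_{l} p_{l} ) \neq y( n ) \), while \( \FactOdo_{x}( z_{l} ) = \Rot^{ m_{l} p_{l} }( \omega ) \) agrees with \( \omega \) in its first \( l \) coordinates (because \( p_{i} \mid p_{l} \) for \( i \leq l \)), i.e.\@ \( \FactOdo_{x}( z_{l} ) \in [ \omega ]_{l} \). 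A convergent subsequence of \( ( z_{l} )_{l} \) in \( X_{x} \) then has a limit \( z \) with \( z( n ) \neq y( n ) \) (by discreteness of \( \Alphab \)) and \( \FactOdo_{x}( z ) = \omega \) (because the cylinders \( [ \omega ]_{l} \) shrink to \( \{ \omega \} \), so their images force the limit of \( \FactOdo_{x}( z_{l} ) \) to be \( \omega \)). This \( z \) shows that \( F_{x}( \Rot^{n}( \omega ) ) \) is not a singleton, so \( \Rot^{n}( \omega ) \in B_{X} \), completing the equivalence.
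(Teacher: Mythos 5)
Your proof is correct. Note that the paper itself does not prove this proposition: it cites properties (A1) and (A2) from Downarowicz's survey and merely remarks that they are ``easy to check'', so there is no argument to compare yours against. Your verification is sound in all its steps: the identity \( \FactOdo_{x}( \Shift^{n}( x ) ) = \Embed( n ) \) follows from the definition of \( k( \cdot , p_{l} ) \); the two compactness arguments in part~(i) are standard and correctly use the discreteness of \( \Alphab \) to upgrade convergence of letters to eventual equality; the reduction of part~(ii) to the existence of a fibre element disagreeing at position \( n \) is exactly right, since \( \Shift^{n}( y ) \) always contributes the value \( y( n ) \) to \( F_{x}( \Rot^{n}( \omega ) ) \); the periodic direction correctly invokes~\eqref{eqn:UniqueShift}; and the construction of the disagreeing element \( z \) as a limit of \( \Shift^{ m_{l} p_{l} }( y ) \), using that rotation by a multiple of \( p_{l} \) fixes the first \( l \) coordinates of \( \omega \), is a clean way to handle the aperiodic direction. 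This is a complete, self-contained proof of a statement the paper leaves to a reference.
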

It follows immediately from Property~\ref{item:A2} and Equation~\eqref{eq:DefSepCover} that the \( C_{a} \)'s form a separating cover. We also note that by Property~\ref{item:A1}, \( B_{X} \) is empty if and only if \( x \) is periodic (see \cite[Lemma~1.4]{Markley_CharSeq} as well).

\paragraph{CPS windows}

Cut-and-project schemes (``CPS'' for short; see \cite[Chapter 7]{BaakeGrimm_Aperio} or \cite{Moody_ModelSurv} for general background information) are another way to describe Toeplitz subshifts: for every binary Toeplitz subshift \( X_{x} \) with associated odometer \( \Omega \), there exists by \cite[Theorem~1]{BJL_ToeplModelSet} a CPS with internal space \( \Omega \), physical space \( \ZZ \) and lattice \( L \DefAs \{ ( j,  \Embed( j ) ) : j \in \ZZ \} \subseteq \ZZ \times \Omega \). While we do not use it in this article, we would like to point out the close relation between the semicocycle \( \tau_{x} \) and its discontinuities on the one hand, and the CPS and the boundary of its windows on the other hand. We recall the definition of a CPS-window from \cite{BJL_ToeplModelSet} (with the straightforward generalisation from a binary to an arbitrary finite alphabet): let \( x \) be a Toeplitz word with period structure \( ( p_{l} ) \). For \( a \in \Alphab \) and \( l \in \NN \), we define \( U_{a,l} \DefAs \{ \omega \in \Omega : \omega( l ) \in \Per( p_{l} , x , a ) \} \). We note that this implies \( U_{a,l} \subseteq U_{a,l+1} \) for all \( a \in \Alphab \), and \( U_{a,l} \cap U_{b,l} = \emptyset \) for all \( a \neq b \). We set \( U_{a} \DefAs \bigcup_{l \in \NN} U_{a,l} \) and define the \( a \)-window as \( W_{a} \DefAs \Close{U_{a}} \). It has the property that the projection of \( (\ZZ \times W_{a} ) \cap L  \) to \( \ZZ \) yields precisely the set \( \{ j \in \ZZ : x(j) = a \} \) of \( a \)-positions in \( x \), see \cite[Theorem~1]{BJL_ToeplModelSet}. In addition, \( W_{a} \cap \Embed( \ZZ ) \) coincides with \( \tau_{x}^{-1}( a ) \). Indeed, for every \( j \in \ZZ \) there exist \(l \in \NN \) and \( a \in \Alphab \) with \( j \in \Per( p_{l} , x , a ) \). This implies \( \tau_{x}( \Embed( j ) ) = a \), but also \( \Embed( j ) \in U_{a,l} \subseteq U_{a} \subseteq W_{a} \). It is easily checked that \( U_{a} \cap U_{b} = \emptyset \) holds for all \( a \neq b \). Moreover, as in the proof of \cite[Theorem~1]{BJL_ToeplModelSet} it can be shown that the combined boundaries of the windows are given by \( \bigcup_{ a \in \Alphab } \partial W_{a} = \bigcup_{a \neq b} ( W_{a} \cap W_{b} ) \). In other words, they consist of all \( \omega \in \Omega \) which can be approximated by sequences in \( \Embed( \ZZ ) \) that lie in at least two different sets \( U_{a}, U_{b} \). As these are precisely the discontinuity points of \( \tau_{x} \), we obtain \( \bigcup_{ a \in \Alphab } \partial W_{a} = B_{X} \).

\section{Proximal components and boundary points}
\label{sec:ProxiCompBdry}

In this section we begin our study of proximal orbits and their relation to the boundary of a separating cover. We introduce three conditions of finiteness for the boundary and put them into context. In subsequent sections we will then investigate the consequences of these conditions, and discuss when factor subshifts satisfy them. We start with the following straightforward but helpful observation, which links the behaviour of elements to  the behaviour of orbits.

\begin{prop}
\label{prop:ElemToOrb}
Let \( \FactOdo_{X} \colon X \to \Odo \) be a factor map from an almost automorphic subshift to its maximal equicontinuous factor. Two elements \( x_{1} ,  x_{2} \in X \) from the same orbit satisfy \( \FactOdo_{X}( x_{1} ) = \FactOdo_{X}( x_{2} ) \) if and only if \( x_{1} =  x_{2} \) holds. In particular, if \( \FactOdo_{X}( x_{1} ) = \FactOdo_{X}( x_{2} ) \) holds and \( x_{1} , x_{2} \) are not equal (respectively not asymptotic), then also \( \Orb( x_{1} )  , \Orb( x_{2} ) \) are not equal (respectively not asymptotic).
\end{prop}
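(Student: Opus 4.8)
The plan is to first establish the stated equivalence and then read off the two ``in particular'' assertions as contrapositives.

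For the equivalence, the ``if'' direction is immediate, so I would suppose that $x_1, x_2$ lie on a common orbit, say $x_2 = \Shift^n(x_1)$, and that $\FactOdo_X(x_1) = \FactOdo_X(x_2)$. Since $\FactOdo_X$ intertwines $\Shift$ with the rotation $\Rot$, I would compute $\FactOdo_X(x_2) = \FactOdo_X(\Shift^n(x_1)) = \Rot^n(\FactOdo_X(x_1))$, so that $\omega := \FactOdo_X(x_1)$ is fixed by $\Rot^n$. Writing $\Rot$ as the rotation by a generator $g$ of $\Odo$, we have $\Rot^n(\omega) = \omega + n g$, and hence $\Rot^n(\omega) = \omega$ forces $n g = 0$, i.e.\@ $\Rot^n = \mathrm{id}$ on all of $\Odo$. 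By minimality of $(\Odo, \Rot)$ this is possible only if $n = 0$ or $\Odo$ is finite: indeed, $\Rot^n = \mathrm{id}$ with $n \neq 0$ makes every orbit finite, so a dense orbit forces $\Odo$ itself to be finite. If $n = 0$, then $x_1 = x_2$ trivially; if $\Odo$ is finite, then it is discrete (being metrizable), so $\FactOdo_X$ is injective and $\FactOdo_X(x_1) = \FactOdo_X(x_2)$ again gives $x_1 = x_2$. Either way the ``only if'' direction holds.

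For the ``in particular'' part I would argue by contraposition. The ``not equal'' version is exactly the equivalence just proved: if $\FactOdo_X(x_1) = \FactOdo_X(x_2)$ and $\Orb(x_1) = \Orb(x_2)$, then $x_1$ and $x_2$ share an orbit and so $x_1 = x_2$. For the ``not asymptotic'' version, I would assume $\FactOdo_X(x_1) = \FactOdo_X(x_2)$ and that $\Orb(x_1), \Orb(x_2)$ are asymptotic; by the definition recalled in Section~\ref{subsec:Subshifts} this means $x_1$ and $\Shift^n(x_2)$ form an asymptotic pair for some $n \in \ZZ$. An asymptotic pair is in particular proximal, so Proposition~\ref{prop:PaulAlmAutoMef} yields $\FactOdo_X(x_1) = \FactOdo_X(\Shift^n(x_2)) = \Rot^n(\FactOdo_X(x_2))$. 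Combined with the hypothesis $\FactOdo_X(x_1) = \FactOdo_X(x_2)$, this says that $\FactOdo_X(x_2)$ is fixed by $\Rot^n$, so by the same dichotomy as above either $n = 0$ --- in which case $x_1$ and $\Shift^0(x_2) = x_2$ are asymptotic --- or $\Odo$ is finite and $\FactOdo_X$ is injective, forcing $x_1 = x_2$ and hence $x_1, x_2$ (trivially) asymptotic. In both cases $x_1, x_2$ are asymptotic, which is the required contrapositive.

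The only delicate point is ruling out a nonzero shift $n$ with $\Rot^n(\FactOdo_X(x_1)) = \FactOdo_X(x_1)$, that is, excluding that the common image is a periodic point of the rotation without the whole system being finite. I expect this to be handled uniformly by the three ingredients already recorded in the excerpt: a rotation by a generator fixes one point only if it fixes all of them, minimality of $(\Odo,\Rot)$ turns a single periodic point into finiteness of $\Odo$, and in the finite case $\FactOdo_X$ is injective. Beyond the factor-map identity $\FactOdo_X \circ \Shift = \Rot \circ \FactOdo_X$ and Proposition~\ref{prop:PaulAlmAutoMef}, no further input is needed.
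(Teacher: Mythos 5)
Your argument is correct and follows essentially the same route as the paper: the intertwining relation $\FactOdo_{X}\circ\Shift = \Rot\circ\FactOdo_{X}$ together with minimality of $(\Odo,\Rot)$ forces $n=0$ unless $\Odo$ is finite, and the asymptotic claim is reduced to Proposition~\ref{prop:PaulAlmAutoMef} via ``asymptotic implies proximal implies equal image''. The only (immaterial) difference is that in the finite case the paper concludes $x_{1}=x_{2}$ from $n$ being a multiple of the period, whereas you invoke injectivity of $\FactOdo_{X}$ on a finite discrete $\Odo$; both are justified by the remarks in Section~\ref{subsec:Subshifts}.
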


\begin{proof}
Let \( x_{1} ,  x_{2} \) be elements from the same orbit, let \( n \in \ZZ \) be such that \( x_{2} = \Shift^{n}( x_{1} ) \) holds, and assume that we have \( \FactOdo_{X}( x_{1} ) = \FactOdo_{X}( x_{2} ) \). We obtain \( \FactOdo_{X}( x_{1} ) = \FactOdo_{X}( x_{2} ) = \Rot^{n}( \FactOdo_{X}( x_{1} ) ) \). For a periodic subshift, \( \FactOdo_{X}( x_{1} ) = \Rot^{n}( \FactOdo_{X}( x_{1} ) ) \) implies that \( n \) is a multiple of the period. For an aperiodic subshift (and hence an infinite maximal equicontinuous factor), the minimality of \( ( \Odo , \Rot ) \) implies \( n = 0 \). In both cases, we conclude \( x_{1} = x_{2} \). In particular, if \( \FactOdo_{X}( x_{1} ) = \FactOdo_{X}( x_{2} ) \) holds, then  \( x_{1} \neq x_{2} \) implies  \( \Orb( x_{1} ) \neq \Orb( x_{2} ) \). Finally, let us assume that \( x_{1} , x_{2} \) are not asymptotic. As we have seen above, \( \FactOdo_{X}( x_{1} ) = \FactOdo_{X}( x_{2} ) \) implies that there is no \( x_{1}^{\prime} \in \Orb( x_{1} ) \setminus \{ x_{1} \} \) with \( \FactOdo_{X}( x_{1}^{\prime} ) = \FactOdo_{X}( x_{2} ) \). Thus, it follows from Proposition~\ref{prop:PaulAlmAutoMef} that \( x_{1}^{\prime} \) and \( x_{2} \) are not proximal. In particular, \( x_{1}^{\prime} , x_{2} \) are not asymptotic for any \( x_{1}^{\prime} \in \Orb( x_{1} ) \).
\end{proof}

\begin{prop}[{see \cite[Proposition~1.1]{Markley_CharSeq}, \cite[Proposition~1.2]{Paul_AlmAutomFlows}}]
\label{prop:ProxiBndry}
Let \( X \subseteq \Alphab^{\ZZ} \) be an almost automorphic subshift with maximal equicontinuous factor \( \Odo \) and factor map \( \FactOdo_{X} \colon X \to \Odo \). Let \( C_{0} , \hdots , C_{\Card{\Alphab}-1} \) be a separating cover that generates \( X \) and let \( B_{X} \) denote its boundary. Then for every \( x_{1} \in X\) the following are equivalent:
\begin{tightenumerate}
\item{The relation \( \FactOdo_{X}( \Orb( x_{1}) ) \cap B_{X} \neq \emptyset \) holds.}
\item{There exists an orbit \( \Orb( x_{2}) \neq \Orb( x_{1}) \) with \( \FactOdo_{X}( \Orb( x_{2}) ) = \FactOdo_{X}( \Orb( x_{1}) ) \).}
\end{tightenumerate}
Moreover, \( \FactOdo_{X}( \Orb( x_{2}) ) = \FactOdo_{X}( \Orb( x_{1}) ) \) holds if and only if \( \Orb( x_{2}) , \Orb( x_{1}) \) are negatively (equivalently: positively; equivalently: two-sided) proximal.
\end{prop}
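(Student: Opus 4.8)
The plan is to reduce everything to the element-level statements in Proposition~\ref{prop:PaulAlmAutoMef} and Proposition~\ref{prop:ElemToOrb}, using throughout that \( \FactOdo_{X} \) intertwines \( \Shift \) and \( \Rot \), so that \( \FactOdo_{X}( \Orb( x ) ) = \Orb( \FactOdo_{X}( x ) ) \) is a single \( \Rot \)-orbit for every \( x \in X \). In particular, condition~(i) is just the assertion that the orbit \( \Orb( \FactOdo_{X}( x_{1} ) ) \) meets \( B_{X} \). For (i)\(\Rightarrow\)(ii), suppose \( \Rot^{n}( \FactOdo_{X}( x_{1} ) ) =: \omega \in B_{X} \) for some \( n \in \ZZ \). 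By the characterisation of the boundary in~\eqref{eq:BndryValues} (equivalently, by Proposition~\ref{prop:PaulAlmAutoMef}), the fibre \( \FactOdo_{X}^{-1}( \omega ) \) is not a singleton. Since \( \Shift^{n}( x_{1} ) \) lies in this fibre, I can choose \( x_{2} \in \FactOdo_{X}^{-1}( \omega ) \) with \( x_{2} \neq \Shift^{n}( x_{1} ) \). Then \( \FactOdo_{X}( \Orb( x_{2} ) ) = \Orb( \omega ) = \FactOdo_{X}( \Orb( x_{1} ) ) \), and it remains only to see that the two orbits are distinct. This is exactly where I would invoke Proposition~\ref{prop:ElemToOrb}: if \( \Orb( x_{2} ) \) and \( \Orb( x_{1} ) \) coincided, then \( x_{2} \) and \( \Shift^{n}( x_{1} ) \) would be two elements of the same orbit with equal \( \FactOdo_{X} \)-image, hence equal, contradicting the choice of \( x_{2} \).

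For (ii)\(\Rightarrow\)(i), I would run the same correspondence in reverse. Given \( \Orb( x_{2} ) \neq \Orb( x_{1} ) \) with \( \FactOdo_{X}( \Orb( x_{2} ) ) = \FactOdo_{X}( \Orb( x_{1} ) ) \), the equality of \( \Rot \)-orbits yields \( n \in \ZZ \) with \( \omega := \FactOdo_{X}( x_{2} ) = \Rot^{n}( \FactOdo_{X}( x_{1} ) ) = \FactOdo_{X}( \Shift^{n}( x_{1} ) ) \). The elements \( x_{2} \) and \( \Shift^{n}( x_{1} ) \) both lie in \( \FactOdo_{X}^{-1}( \omega ) \), and they are distinct, since otherwise \( x_{2} \in \Orb( x_{1} ) \), contradicting \( \Orb( x_{2} ) \neq \Orb( x_{1} ) \). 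Thus \( \FactOdo_{X}^{-1}( \omega ) \) is not a singleton, so by Proposition~\ref{prop:PaulAlmAutoMef} the orbit \( \Orb( \omega ) = \FactOdo_{X}( \Orb( x_{1} ) ) \) meets \( B_{X} \), which is~(i).

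For the ``Moreover'' part I would read off both implications from the element-level equivalence in Proposition~\ref{prop:PaulAlmAutoMef} (two elements have the same \( \FactOdo_{X} \)-image if and only if they are proximal in at least one direction), together with the remark following it that one-directional, positive, negative and two-sided proximality all coincide here, and with the definition of proximal orbits from Section~\ref{subsec:Subshifts}. If \( \FactOdo_{X}( \Orb( x_{2} ) ) = \FactOdo_{X}( \Orb( x_{1} ) ) \), then as above there is \( n \) with \( \FactOdo_{X}( x_{2} ) = \FactOdo_{X}( \Shift^{n}( x_{1} ) ) \), so \( x_{2} \) and the representative \( \Shift^{n}( x_{1} ) \in \Orb( x_{1} ) \) are proximal, whence \( \Orb( x_{1} ) \) and \( \Orb( x_{2} ) \) are proximal in any (hence every) one of the equivalent directions. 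Conversely, if the orbits are proximal, pick proximal representatives \( x_{1}^{\prime} \in \Orb( x_{1} ) \) and \( x_{2}^{\prime} \in \Orb( x_{2} ) \); then \( \FactOdo_{X}( x_{1}^{\prime} ) = \FactOdo_{X}( x_{2}^{\prime} ) \), and applying \( \Orb \) to both sides gives \( \FactOdo_{X}( \Orb( x_{1} ) ) = \FactOdo_{X}( \Orb( x_{2} ) ) \).

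I do not expect a serious analytic obstacle here, since the substantive content is already packaged in Propositions~\ref{prop:PaulAlmAutoMef} and~\ref{prop:ElemToOrb}. The only point that genuinely needs care, and the sole place where almost automorphy (through Proposition~\ref{prop:ElemToOrb}) is really used, is the faithful passage between the element level and the orbit level: one must ensure that distinct elements of a fibre give rise to distinct orbits, and conversely, so that the bookkeeping never silently collapses two orbits into one.
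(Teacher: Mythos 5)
Your proposal is correct and follows essentially the same route as the paper: both directions are reduced to the element-level statements of Proposition~\ref{prop:PaulAlmAutoMef} (non-singleton fibres over boundary orbits, and proximality being detected by equal \( \FactOdo_{X} \)-images), with Proposition~\ref{prop:ElemToOrb} supplying the passage between distinct fibre elements and distinct orbits; the ``moreover'' part is the same chain of equivalences. The only cosmetic difference is which representative you shift and over which point of the orbit you inspect the fibre.
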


\begin{proof}
Essentially this follows from Proposition~\ref{prop:PaulAlmAutoMef}: if \( \FactOdo_{X}( \Orb( x_{1}) ) \cap B_{X} \neq \emptyset \) holds, then \( \FactOdo_{X}^{-1}( \FactOdo_{X}( x_{1} ) ) \) is not a singleton. Hence there exists \( x_{2} \neq x_{1} \) with \( \FactOdo_{X}( x_{2} ) = \FactOdo_{X}( x_{1} ) \), and therefore with \( \FactOdo_{X}( \Orb( x_{2} ) ) = \FactOdo_{X}( \Orb( x_{1} ) ) \). Moreover, Proposition~\ref{prop:ElemToOrb} implies \( \Orb( x_{2} ) \neq \Orb( x_{1} ) \). Conversely, assume that there is an orbit \( \Orb( x_{2} ) \neq \Orb( x_{1} ) \) with \( \FactOdo_{X}( \Orb( x_{2} ) ) = \FactOdo_{X}( \Orb( x_{1} ) ) \), and let \( n \in \ZZ \) be such that \( \FactOdo_{X}( x_{1} ) = \FactOdo_{X}( \Shift^{n}( x_{2} ) ) \) holds. This yields \( \FactOdo_{X}^{-1}( \FactOdo_{X}( x_{1} ) ) \supseteq \{ x_{1} , \Shift^{n}( x_{2} ) \} \). Invoking Proposition~\ref{prop:PaulAlmAutoMef} once more, we obtain \( \Orb( \FactOdo_{X}( x_{1} ) ) \cap B_{X} \neq \emptyset \). Finally, we notice for the ``moreover''-part of our assertion that:
\begin{align*}
&\FactOdo_{X}( \Orb( x_{2}) ) = \FactOdo_{X}( \Orb( x_{1}) )\\
\Longleftrightarrow {} & \exists \, n \in \ZZ \text{ such that } \FactOdo_{X}( \Shift^{n}( x_{2}) ) = \FactOdo_{X}( x_{1} ) \text{ holds}\\
\Longleftrightarrow {} & \exists \,n \in \ZZ \text{ such that } \Shift^{n}( x_{2}) , x_{1} \text{ are proximal (see Proposition~\ref{prop:PaulAlmAutoMef})}\\
\Longleftrightarrow {} & \Orb( x_{2}) ,  \Orb( x_{1}) \text{ are proximal.}
\end{align*}
As noted after Proposition~\ref{prop:PaulAlmAutoMef}, for the proximality notion here we can equivalently consider positive, negative or two-sided proximality.
\end{proof}

\begin{cor}
\label{cor:ProxiOrbEquiv}
In almost automorphic subshifts, proximality of orbits is an equivalence relation.
\end{cor}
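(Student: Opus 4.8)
The plan is to reduce the claim entirely to the characterisation established in Proposition~\ref{prop:ProxiBndry}. That result shows that for two orbits in an almost automorphic subshift, proximality (in any of the equivalent one-sided or two-sided senses) is equivalent to the equality \( \FactOdo_{X}( \Orb( x_{1} ) ) = \FactOdo_{X}( \Orb( x_{2} ) ) \) of their images in the maximal equicontinuous factor \( \Odo \). The relation ``proximality of orbits'' is thus nothing but the pullback of the equality relation on orbits in \( \Odo \) along the map \( \Orb( x ) \mapsto \FactOdo_{X}( \Orb( x ) ) \).

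First I would record that the pullback of an equivalence relation along any map is again an equivalence relation, so that all three defining properties are inherited from equality. Reflexivity amounts to \( \FactOdo_{X}( \Orb( x ) ) = \FactOdo_{X}( \Orb( x ) ) \), symmetry to the symmetry of equality, and transitivity to its transitivity: if \( \FactOdo_{X}( \Orb( x_{1} ) ) = \FactOdo_{X}( \Orb( x_{2} ) ) \) and \( \FactOdo_{X}( \Orb( x_{2} ) ) = \FactOdo_{X}( \Orb( x_{3} ) ) \), then \( \FactOdo_{X}( \Orb( x_{1} ) ) = \FactOdo_{X}( \Orb( x_{3} ) ) \). Re-applying Proposition~\ref{prop:ProxiBndry} in each case translates these equalities back into the corresponding proximality statements for orbits.

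The only genuinely non-trivial property is transitivity, which — as the example of the two constant sequences in Section~\ref{subsec:Subshifts} shows — fails for proximality of orbits in general subshifts. The point is that in the almost automorphic setting Proposition~\ref{prop:ProxiBndry} converts proximality into an honest equality of factor-images, and transitivity of equality then does all the work. I expect no serious obstacle beyond ensuring that the one-sided and two-sided versions of proximality are treated uniformly; this is already guaranteed by the remark after Proposition~\ref{prop:PaulAlmAutoMef} that these notions coincide, so that the resulting equivalence relation is the same regardless of which flavour of proximality one fixes.
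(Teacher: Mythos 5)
Your proposal is correct and takes essentially the same route as the paper: the paper likewise notes that reflexivity and symmetry are immediate and derives transitivity by translating orbit proximality into the equality \( \FactOdo_{X}( \Orb( x_{1} ) ) = \FactOdo_{X}( \Orb( x_{2} ) ) \) via the last part of Proposition~\ref{prop:ProxiBndry}. Your packaging of the argument as ``the pullback of the equality relation along \( \Orb( x ) \mapsto \FactOdo_{X}( \Orb( x ) ) \)'' is just a uniform restatement of the same idea.
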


\begin{proof}
Clearly, orbit proximality is symmetric and reflexive. The transitivity follows immediately from the last part of Proposition~\ref{prop:ProxiBndry}: if \( \Orb( x_{1} ) , \Orb( x_{2} ) \) and \( \Orb( x_{2} ) , \Orb( x_{3} ) \) are pairs of proximal orbits, then \( \FactOdo_{X}( \Orb( x_{1} ) ) = \FactOdo_{X}( \Orb( x_{2} ) ) \) and \( \FactOdo_{X}( \Orb( x_{2} ) ) = \FactOdo_{X}( \Orb( x_{3} ) ) \) imply  \( \FactOdo_{X}( \Orb( x_{1} ) ) = \FactOdo_{X}( \Orb( x_{3} ) ) \), and hence proximality of \( \Orb( x_{1} ) \) and \( \Orb( x_{3} ) \).
\end{proof}

We note in particular that the preimage \( \FactOdo_{X}^{-1}( \Orb( \omega ) ) \) of an orbit in \( \Omega \) is precisely an equivalence class under the proximality relation. If such an equivalence class consists of more than a single orbit, we call it a \emph{proximal component} (in analogy to the asymptotic case, see \cite[Section~3]{DonoDuraMaaPeti-AutoGrLowCompl}). Thus, by Proposition~\ref{prop:ProxiBndry} the equivalence class of \( \Orb( x ) \) is a proximal component if and only if \( \Orb( \FactOdo_{X}( x ) ) \) intersects \( B_{X} \).

Recall now that every almost automorphic subshift can be defined via a separating cover as given in~\eqref{eq:DefSepCover}. In this article we study the consequences of certain finiteness properties of the cover's boundary \( B_{X} \). We are interested in three different properties:
\begin{tightdescription}
\item[(FPC)]{\renewcommand{\thedesclist}{(FPC)}\refstepcounter{desclist}\label{item:FinOrb}Only finitely many orbits \( \Orb( \omega ) \subseteq \Omega \) intersect \( B_{X} \).}
\item[(HS)]{\renewcommand{\thedesclist}{(HS)}\refstepcounter{desclist}\label{item:FinCap}For every \( \Orb( \omega ) \), the intersection with \( B_{X} \) is finite (possibly empty).}
\item[(FB)]{\renewcommand{\thedesclist}{(FB)}\refstepcounter{desclist}\label{item:FinD}The set \( B_{X} \) is finite.}
\end{tightdescription}

We note that:
\begin{tightitemize}
\item{Property~\ref{item:FinOrb} is equivalent to \textbf{F}initely many \textbf{P}roximal \textbf{C}omponents; for \( 0 \text{\,-} 1 \)-sequences, a separating cover with property~\ref{item:FinCap} is called a \textbf{H}edlund \textbf{S}et in \cite[Section~3]{Markley_CharSeq}; and property~\ref{item:FinD} denotes a \textbf{F}inite \textbf{B}oundary.}
\item{Toeplitz subshift with separated holes always satisfy \ref{item:FinCap}, since separated holes are equivalent to at most one boundary point within each orbit \( \Orb( \omega ) \) by Proposition~\ref{prop:AperDx}~\ref{item:A1}.}
\item{Generalised Oxtoby sequences never satisfy \ref{item:FinCap}, see Proposition~\ref{prop:OxtoNotB} below.}
\item{\ref{item:FinD} holds, if and only if \ref{item:FinOrb} and \ref{item:FinCap} hold.}
\end{tightitemize}

Moreover, each of the properties \ref{item:FinOrb}, \ref{item:FinCap} and \ref{item:FinD} is preserved when going to a factor subshift with the same maximal equicontinuous factor (recall from Section~\ref{subsec:Subshifts} that the factor subshift is again almost automorphic). We obtain this in Corollary~\ref{cor:PresUnderFact} as a consequence of the following observation. We remark that results related to  Proposition~\ref{prop:BndryFactor} can also be found in \cite[Section~2]{Markley_CharSeq} for the case \( \Card{ \Alphab } = 2 \), and in \cite[Section~2]{DownKwiatLac_CritIsomAppl} for the Toeplitz case (for the latter, we note that we can indeed obtain what there is called a ``homomorphism over zero'' by making the correct choice for the factor map to the maximal equicontinuous factor).

\begin{prop}
\label{prop:BndryFactor}
Let \( X \subseteq \Alphab^{\ZZ} \) be an almost automorphic subshift, let \( \Psi \) be a factor map based on a sliding block code \( \psi \colon \Alphab^{[ -J , J ]} \to \Alphab \) and let \( Y \DefAs \Psi( X ) \) be the resulting factor subshift. Assume that \( X \) and \( Y \) have the same odometer \( \Omega \) as their maximal equicontinuous factor. Let \( C_{0} , \hdots , C_{\Card{\Alphab}-1} \) and \( D_{0} , \hdots , D_{\Card{\Alphab}-1} \) denote separating covers which define \( X \) respectively \( Y \) as in~\eqref{eq:DefSepCover}, and let \( B_{X} \) and \( B_{Y} \) denote their boundaries. Then there is a bijection \( \gamma \colon \Odo \to \Odo \) such that for every \( \omega \in B_{Y} \) there exists \( j \in [ -J , J ] \) with \( \Rot^{j}( \gamma( \omega ) ) \in B_{X} \). In particular, if \( \Orb( \omega) \) intersects \( B_{Y} \), then \( \Orb( \gamma( \omega ) ) \) intersects \( B_{X} \).
\end{prop}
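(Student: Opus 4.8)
The plan is to exploit the compatibility of the two factor maps to \( \Odo \) together with the boundary characterisation in Equation~\eqref{eq:BndryValues}. Since \( X \) and \( Y \) share the same maximal equicontinuous factor \( \Odo \), the discussion in Section~\ref{subsec:Subshifts} provides a rotation \( \gamma \colon ( \Odo , \Rot ) \to ( \Odo , \Rot ) \) with \( \FactOdo_{X} = \gamma \circ \FactOdo_{Y} \circ \Psi \). This \( \gamma \) is the required bijection (a rotation is clearly a bijection), and being a rotation it commutes with \( \Rot \), which is exactly what the ``in particular''-part will need. So I would fix this \( \gamma \) from the outset.

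Next I would unpack a boundary point of \( Y \). Let \( \omega \in B_{Y} \). By the analogue of~\eqref{eq:BndryValues} for \( Y \) (applied to the cover \( D_{0} , \hdots , D_{\Card{\Alphab}-1} \) and the map \( \FactOdo_{Y} \)) there are \( y_{1} , y_{2} \in \FactOdo_{Y}^{-1}( \omega ) \) with \( y_{1}( 0 ) \neq y_{2}( 0 ) \). Using surjectivity of \( \Psi \), I choose preimages \( x_{1} , x_{2} \in X \) with \( \Psi( x_{i} ) = y_{i} \). The relation \( \FactOdo_{X} = \gamma \circ \FactOdo_{Y} \circ \Psi \) then gives \( \FactOdo_{X}( x_{i} ) = \gamma( \FactOdo_{Y}( y_{i} ) ) = \gamma( \omega ) \) for \( i = 1 , 2 \), so both lifts project to \( \gamma( \omega ) \).

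The key step is to locate a disagreement of \( x_{1} , x_{2} \) inside the window \( [ -J , J ] \). Since \( \Psi \) is given by the sliding block code \( \psi \), we have \( y_{i}( 0 ) = \psi( x_{i}[ -J , J ] ) \), and \( y_{1}( 0 ) \neq y_{2}( 0 ) \) forces \( x_{1}[ -J , J ] \neq x_{2}[ -J , J ] \). Hence there is some \( j \in [ -J , J ] \) with \( x_{1}( j ) \neq x_{2}( j ) \). Shifting by \( j \), the elements \( \Shift^{j}( x_{1} ) \) and \( \Shift^{j}( x_{2} ) \) lie in \( \FactOdo_{X}^{-1}( \Rot^{j}( \gamma( \omega ) ) ) \) and satisfy \( \Shift^{j}( x_{1} )( 0 ) \neq \Shift^{j}( x_{2} )( 0 ) \); applying~\eqref{eq:BndryValues} once more (now for the cover \( C_{0} , \hdots , C_{\Card{\Alphab}-1} \) and the map \( \FactOdo_{X} \)) yields \( \Rot^{j}( \gamma( \omega ) ) \in B_{X} \), as claimed. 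The ``in particular''-statement then follows because \( \gamma \) commutes with \( \Rot \): if \( \Rot^{n}( \omega ) \in B_{Y} \), the argument above produces a \( j \) with \( \Rot^{j}( \gamma( \Rot^{n}( \omega ) ) ) = \Rot^{j+n}( \gamma( \omega ) ) \in B_{X} \), so \( \Orb( \gamma( \omega ) ) \) meets \( B_{X} \).

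The main obstacle is conceptual rather than technical: one must recognise that membership in the boundary is detectable purely through a disagreement at position \( 0 \) among elements of a single \( \FactOdo \)-fibre, via~\eqref{eq:BndryValues}, and that the finite radius \( J \) of the sliding block code is precisely what confines the disagreement of the lifts -- and hence the resulting boundary point of \( X \) -- to a shift by at most \( J \). Everything else is bookkeeping with the identity \( \FactOdo_{X} = \gamma \circ \FactOdo_{Y} \circ \Psi \) and the surjectivity of \( \Psi \).
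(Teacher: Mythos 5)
Your proposal is correct and follows essentially the same route as the paper's proof: fix the rotation \( \gamma \) with \( \FactOdo_{X} = \gamma \circ \FactOdo_{Y} \circ \Psi \), lift a disagreement at position \( 0 \) in the fibre over \( \omega \in B_{Y} \) to a disagreement of preimages at some \( j \in [ -J , J ] \), and conclude via the boundary characterisation that \( \Rot^{j}( \gamma( \omega ) ) \in B_{X} \). The only cosmetic difference is that you invoke~\eqref{eq:BndryValues} where the paper works directly with the definition~\eqref{eq:DefSepCover}; these are equivalent.
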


\begin{proof}
First we note that \( \FactOdo_{X} \colon X \to \Odo \) and \( \FactOdo_{Y} \circ \Psi \colon X \to \Odo \) both are factor maps from the subshift \( X \) to its maximal equicontinuous factor. Thus, there exists a rotation (and hence bijection) \( \gamma \colon ( \Odo , \Rot ) \to ( \Odo , \Rot ) \) with \( \FactOdo_{X} = \gamma \circ \FactOdo_{Y} \circ \Psi \), see for example \cite[Section~2]{DownKwiatLac_CritIsomAppl}. Consider now \( \omega \in B_{Y} \) and let \( a \neq b \in \{ 0 , \hdots , \Card{\Alphab}-1 \} \) be such that \( \omega \in D_{a} \cap D_{b} \) holds. By definition of the separating cover, see~\eqref{eq:DefSepCover}, there are therefore \( y_{1} , y_{2} \in \FactOdo_{Y}^{-1}( \omega ) \) with \( y_{1}( 0 ) = a \) and \( y_{2}( 0 ) = b \). We now consider  \( x_{1} \in \Psi^{-1}( y_{1} ) \) and \( x_{2} \in \Psi^{-1}( y_{2} ) \). As \( \Psi \) is given by a sliding block code on \( [ -J , J ] \), there exists \( j \in [ -J , J ] \) with \( x_{1}( j ) \neq x_{2}( j ) \). We obtain
\[ C_{ x_{1}( j ) } \ni \FactOdo_{X}( \Shift^{j}( x_{1} ) ) = \Rot^{j}( \gamma ( \FactOdo_{Y} ( \Psi ( x_{1} )))) = \Rot^{j}( \gamma( \omega) ) , \]
and similarly \( C_{ x_{2}( j ) } \ni \FactOdo_{X}( \Shift^{j}( x_{2} ) ) = \Rot^{j}( \gamma( \omega) ) \), which yields \( \Rot^{j}( \gamma( \omega) ) \in B_{X} \).
\end{proof}

\begin{cor}
\label{cor:PresUnderFact}
Under the assumptions of Proposition~\ref{prop:BndryFactor} we have the following:
\begin{tightenumerate}
\item{\label{item:PcFactor}The number of proximal components of \( Y \) is at most the number of proximal components in \( X \). In particular, the number of proximal components is invariant under topological conjugacies.}
\item{\label{item:PropFactor}If \( X \) has any of the properties \ref{item:FinOrb}, \ref{item:FinCap} or \ref{item:FinD}, then also \( Y \) has these properties.}
\end{tightenumerate}
\end{cor}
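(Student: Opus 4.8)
The plan is to derive both parts of the corollary directly from Proposition~\ref{prop:BndryFactor}, which already does the essential work by producing the bijection \( \gamma \colon \Odo \to \Odo \) relating the two boundaries. The key structural fact I would invoke throughout is the characterisation established after Proposition~\ref{prop:ProxiBndry}: the equivalence class of \( \Orb( x ) \) is a proximal component precisely when \( \Orb( \FactOdo_{X}( x ) ) \) intersects \( B_{X} \), and more specifically that \( \FactOdo_{X}^{-1}( \Orb( \omega ) ) \) is exactly one proximal equivalence class. Hence the proximal components of \( X \) correspond bijectively to those orbits \( \Orb( \omega ) \subseteq \Odo \) that intersect \( B_{X} \) (and those with more than one orbit in the fibre), and likewise for \( Y \) and \( B_{Y} \).

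For part~\ref{item:PcFactor}, I would argue as follows. Proposition~\ref{prop:BndryFactor} tells us that whenever \( \Orb( \omega ) \) intersects \( B_{Y} \), then \( \Orb( \gamma( \omega ) ) \) intersects \( B_{X} \). Since \( \gamma \) is a rotation of \( \Odo \), it commutes with \( \Rot \) and hence maps orbits to orbits; it therefore induces a well-defined injection from the set of \( \Rot \)-orbits meeting \( B_{Y} \) into the set of \( \Rot \)-orbits meeting \( B_{X} \). Translating through the correspondence between such orbits and proximal components, this gives that the number of proximal components of \( Y \) is at most that of \( X \). For the ``in particular'' statement about topological conjugacies: a conjugacy \( \Psi \colon X \to Y \) is a factor map in both directions, so applying the inequality once in each direction yields equality, giving invariance of the number of proximal components under topological conjugacy.

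For part~\ref{item:PropFactor}, each of the three properties is phrased as a finiteness condition on orbits meeting \( B_{X} \) or on \( B_{X} \) itself, so each transfers via \( \gamma \). Property~\ref{item:FinOrb} follows immediately from the injection of part~\ref{item:PcFactor}: if only finitely many orbits meet \( B_{X} \), then only finitely many meet \( B_{Y} \). For property~\ref{item:FinCap}, fix an orbit \( \Orb( \omega ) \subseteq \Odo \) and consider \( \Orb( \omega ) \cap B_{Y} \); for each such boundary point \( \Rot^{n}( \omega ) \in B_{Y} \), Proposition~\ref{prop:BndryFactor} supplies some \( j \in [-J,J] \) with \( \Rot^{j}( \gamma( \Rot^{n}( \omega ) ) ) = \Rot^{j+n}( \gamma( \omega ) ) \in B_{X} \), so distinct boundary points in \( \Orb( \omega ) \cap B_{Y} \) inject (at most \( 2J+1 \)-to-one) into the finite set \( \Orb( \gamma( \omega ) ) \cap B_{X} \), forcing \( \Orb( \omega ) \cap B_{Y} \) to be finite. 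Finally, property~\ref{item:FinD} is deduced from the last bullet in the list preceding the corollary, namely that \ref{item:FinD} holds if and only if both \ref{item:FinOrb} and \ref{item:FinCap} hold; since both of the latter pass to \( Y \), so does \ref{item:FinD}.

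The main subtlety to handle carefully is the bounded-multiplicity counting in property~\ref{item:FinCap}: the map \( \Rot^{n}( \omega ) \mapsto \Rot^{j+n}( \gamma( \omega ) ) \) depends on the auxiliary choice of \( j \in [-J,J] \), so it is not literally a function of \( \Rot^{n}(\omega) \) until one fixes such a choice, but since the target set \( \Orb( \gamma(\omega) ) \cap B_{X} \) is finite and each preimage fibre has size at most \( 2J+1 \), finiteness of the source is immediate regardless of the choice. I expect this to be the only place requiring genuine care; the remaining implications are formal consequences of the orbit-to-component correspondence and the fact that \( \gamma \) is an orbit-preserving bijection.
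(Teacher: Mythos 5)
Your proposal is correct and follows essentially the same route as the paper: part (i) via the injectivity of \( \gamma \) on orbits meeting the boundary together with the orbit-to-proximal-component correspondence (and the two-directional application for conjugacies), part (ii) via the bound \( \Card{ \Orb( \omega ) \cap B_{Y} } \leq (2J+1) \cdot \Card{ \Orb( \gamma( \omega ) ) \cap B_{X} } \) for \ref{item:FinCap} and the equivalence of \ref{item:FinD} with the conjunction of \ref{item:FinOrb} and \ref{item:FinCap}. Your extra remark about the choice of \( j \in [-J,J] \) not mattering for the finiteness count is a fair point of care, but the argument is the same as the paper's.
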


\begin{proof}
\emph{\ref{item:PcFactor}} By Proposition~\ref{prop:ProxiBndry}, a proximal component of \( X \) is the equivalence class of an orbit whose image in the maximal equicontinuous factor intersects \( B_{X} \). Since \( \gamma \) in Proposition~\ref{prop:BndryFactor} is injective, the number of such orbits in \( Y \) is less or equal to the number of such orbits in \( X \). If \( X \) and \( Y \)  are topologically conjugate, then they are factors of each other, and thus the number of proximal components must be the same.

\emph{\ref{item:PropFactor}} We have already seen in part~\ref{item:PcFactor} that \ref{item:FinOrb} is preserved under factor maps. Moreover, by Proposition~\ref{prop:BndryFactor} we have \( \Card{ \Orb( \omega ) \cap B_{Y} } \leq (2J+1) \cdot \Card{ \Orb( \gamma( \omega ) ) \cap B_{X} } \), which shows that also \ref{item:FinCap} is preserved. Combining both results proves the claim for \ref{item:FinD}.
\end{proof}

\begin{prop}
\label{prop:OxtoNotB}
For every generalised Oxtoby sequence \( x \) and for every \( y \in X_{x} \) we have \( \Card{ \Aper( y ) } \in \{ 0 , \infty \} \). In particular, no generalised Oxtoby subshift satisfies property~\ref{item:FinCap} or property~\ref{item:FinD}.
\end{prop}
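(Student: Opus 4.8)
The plan is to prove the dichotomy $\Card{\Aper(y)} \in \{0, \infty\}$ by showing that as soon as $\Aper(y)$ contains one position, it must contain an entire copy of the level-$j$ hole pattern for \emph{every} $j$, and that these patterns grow without bound. I would fix a period structure $(p_l)$ witnessing the Oxtoby property, write $A_j \DefAs \Aper(p_j, x) \cap [0, p_j-1]$ for the hole pattern in a single $p_j$-block, and set $\omega \DefAs \FactOdo_x(y)$, so that $\omega(l) = k(y, p_l)$. The first routine step is to record, using~\eqref{eqn:UniqueShift}, that $\Aper(p_l, y) = \Aper(p_l, x) - \omega(l)$ and that these sets decrease to $\Aper(y) = \bigcap_{l} \Aper(p_l, y)$; the inclusion $\Aper(p_{l+1}, y) \subseteq \Aper(p_l, y)$ uses $\omega(l+1) \equiv \omega(l) \bmod p_l$ together with the $p_l$-periodicity of $\Aper(p_l, x)$.

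The combinatorial core is a cluster lemma: for all $j \leq l$ and all $m \in \ZZ$, the set $\Aper(p_l, x) \cap [mp_j, (m+1)p_j - 1]$ is either empty or equal to the full copy $mp_j + A_j$. I would prove this by induction on $l$, the base case $l = j$ being immediate from $p_j$-periodicity. For the step from $l$ to $l+1$, the block $[mp_j,(m+1)p_j-1]$ sits inside a single $p_l$-block, so condition~\ref{item:Oxto:All} forces $\Aper(p_{l+1}, x)$ to agree with $\Aper(p_l, x)$ on that whole $p_l$-block or to be empty there; intersecting with the sub-block and invoking the induction hypothesis closes the step. (Condition~\ref{item:Oxto:All} is stated for $k$ in one fundamental domain, but extends to all $k \in \ZZ$ by $p_{l+1}$-periodicity.) Iterating the recursion $A_{l+1} = \bigcup_{k \in S_{l+1}}(kp_l + A_l)$, where $S_{l+1} \subseteq [0, \frac{p_{l+1}}{p_l}-1]$ is the set of sub-blocks not filled $p_{l+1}$-periodically and satisfies $\Card{S_{l+1}} \geq 2$ by the second defining Oxtoby property, gives $\Card{A_{l+1}} \geq 2\Card{A_l}$, so $\Card{A_j} \to \infty$ (this is the growth already recorded before Proposition~\ref{prop:OxtoNotB}).

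Now suppose $n_0 \in \Aper(y)$. The key point is that translating the cluster lemma from $x$ to $y$ preserves $p_j$-alignment uniformly in $l$: since $\omega(l) \equiv \omega(j) \bmod p_j$ for $l \geq j$, the set $\Aper(p_l, y)$ restricted to the fixed block $I_m \DefAs [mp_j - \omega(j), (m+1)p_j - 1 - \omega(j)]$ is, independently of $l \geq j$, either empty or the copy $mp_j - \omega(j) + A_j$. Because $n_0$ lies in exactly one such block $I_{m_*}$ and belongs to $\Aper(p_l, y)$ for every $l$, that block cannot be empty at any level $l \geq j$; hence the whole copy $m_* p_j - \omega(j) + A_j$ is contained in $\Aper(p_l, y)$ for all $l \geq j$, and by monotonicity for all $l$, so it lies in $\Aper(y)$. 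Thus $\Card{\Aper(y)} \geq \Card{A_j}$ for every $j$, forcing $\Card{\Aper(y)} = \infty$. I expect this transfer step --- keeping the block alignment and the ``empty or full copy'' alternative stable across all $l$ simultaneously, so that a single surviving position drags an entire cluster into the infinite intersection --- to be the main obstacle, and the relation $\omega(l) \equiv \omega(j) \bmod p_j$ is exactly what makes it go through.

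For the ``in particular'' part, I would first note that the subshift is non-periodic (by the second Oxtoby property), so $X_x$ contains a Toeplitz orbital $y$ with $\Aper(y) \neq \emptyset$, whence $\Card{\Aper(y)} = \infty$ by the first part. By Proposition~\ref{prop:AperDx}~\ref{item:A1} we have $\Aper(y) = \{n \in \ZZ : \Rot^n(\omega) \in B_{X}\}$, and since the odometer is infinite the map $n \mapsto \Rot^n(\omega)$ is injective; therefore $\Orb(\omega) \cap B_{X}$ is infinite. This violates property~\ref{item:FinCap}, and as $B_{X} \supseteq \Orb(\omega) \cap B_{X}$ is infinite it also violates property~\ref{item:FinD}.
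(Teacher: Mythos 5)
Your proof is correct. The underlying mechanism is the same as in the paper's argument --- the Oxtoby condition~\ref{item:Oxto:All} forces non-periodic positions to survive or vanish in whole \( p_{j} \)-blocks, so a single element of \( \Aper( y ) \) drags an entire copy of the level-\( j \) hole pattern with it, and these patterns have cardinality at least \( 2^{j} \) --- but the bookkeeping is genuinely different. The paper localises: it fixes the window \( [ -p_{t}+1 , p_{t}-1 ] \), chooses a level \( L \) at which \( \Aper( p_{L} , y ) \) already agrees with \( \Aper( y ) \) on that window, transfers to \( x \) by the shift \( \FactOdo_{x}( y )( L ) \), and counts the \( \geq 2^{t} \) many \( p_{t} \)-holes in the \( p_{t} \)-block around \( \FactOdo_{x}( y )( L ) \), all of which are \( p_{L} \)-holes by iterating condition~\ref{item:Oxto:All}. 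You instead prove a cluster lemma that is uniform in the level (\( \Aper( p_{l} , x ) \) restricted to any \( p_{j} \)-block is empty or a full copy of \( A_{j} \), for every \( l \geq j \) at once), push it to \( y \) via the congruence \( \omega( l ) \equiv \omega( j ) \bmod p_{j} \), and note that the block containing \( n_{0} \) is non-empty at every level simultaneously, so the whole copy lies in the intersection \( \Aper( y ) \). Your route avoids the stabilisation level \( L \) and produces an explicit cluster inside \( \Aper( y ) \) for each \( j \) (rather than only a lower bound per window), at the cost of carrying the alignment \( \omega( l ) - \omega( j ) \in p_{j}\ZZ \) through all levels --- which you correctly identify as the crux, and which you handle properly. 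The ``in particular'' part matches the paper's, via Proposition~\ref{prop:AperDx}~\ref{item:A1}; your explicit remark that \( n \mapsto \Rot^{n}( \omega ) \) is injective on the infinite odometer is a detail the paper leaves implicit.
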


\begin{proof}
Let \( (p_{l}) \) denote a period structure with respect to which \( x \) is a generalised Oxtoby sequence. We fix an arbitrary \( t \in \NN \), and assume that \( y \in X_{x} \) is such that \( \Aper( y ) \neq \emptyset \) holds. We will show \( 2^{t} \leq \Card{ \Aper(y) } \), which implies the first part of the assertion. Since \( \Card{ \Aper(y) } \) is \( \Shift \)-invariant, we will assume without loss of generality that \( 0 \in \Aper( y ) \) holds.

Using that \( ( \Aper( p_{l} , y ) )_{l} \) is a decreasing sequence of sets with \( \bigcap_{l=1}^{\infty} \Aper( p_{l} , y ) = \Aper( y ) \), we note that there exists \( L \in \NN \) with
\[ \Aper( p_{L} , y ) \cap [ -p_{t}+1 , p_{t}-1 ] = \Aper( y ) \cap [ -p_{t}+1 , p_{t}-1 ] . \]
We remark that \( L \geq t \) holds, because \( [0 , p_{t}-1] \) contains some \( p_{t-1} \)-holes which are filled \( p_{t} \)-periodically. Recall now that the \( p_{L} \)-periodic parts in \( x \) and \( y \) differ only by a finite shift of \( \FactOdo_{x}(y)(L) \), which yields
\begin{align*}
&\Card{ \Aper( p_{L} , x ) \cap [-p_{t}+1 + \FactOdo_{x}(y)(L) , p_{t}-1 + \FactOdo_{x}(y)(L)] } \\*
={}& \Card{ \Aper( p_{L} , y ) \cap [-p_{t}+1 , p_{t}-1] }\\
={}& \Card{\Aper( y ) \cap [ -p_{t}+1 , p_{t}-1 ] } \\
\leq{}& \Card{ \Aper( y ) }
\end{align*}
Moreover, \( 0 \in \Aper( y ) \subseteq \Aper( p_{L} , y ) \) implies \(\FactOdo_{x}(y)(L) \in \Aper( p_{L} , x ) \). We consider the interval of the form \( [n p_{t} , (n+1)p_{t} -1 ] \) in \( x \) around \( \FactOdo_{x}(y)(L) \). Because this interval contains a \( p_{L}\)-hole of \( x \), by part~\ref{item:Oxto:All} in the definition of Oxtoby sequences all \( p_{t} \)-holes in the interval are \( p_{L} \)-holes. Since \( x \) is a generalised Oxtoby sequence, the interval (like every interval of length \( p_{t} \)) contains at least \( 2^{t} \)-many \( p_{t} \)-holes, see Section~\ref{subsec:PrelimToepl}. We obtain
\[ 2^{t} \leq \Card{ \Aper( p_{L} , x ) \cap [-p_{t}+1+ \FactOdo_{x}(y)(L) , p_{t}-1+ \FactOdo_{x}(y)(L)] } \leq \Card{ \Aper( y ) } .\]

For the second part of our assertion, recall that generalised Oxtoby subshifts are non-periodic by definition. Therefore, there exists at least one Toeplitz orbital \( y \in X_{x} \). By the first part of this proposition and by Proposition~\ref{prop:AperDx}~\ref{item:A1}, this implies \( \infty = \Card{ \Aper( y ) } = \Card{ \Orb( \FactOdo_{x}( y ) ) \cap B_{X} } \), and hence rules out properties~\ref{item:FinCap} and \ref{item:FinD}.
\end{proof}

\section{Consequences of \ref{item:FinOrb}, \ref{item:FinCap} and \ref{item:FinD}}
\label{sec:ConseqOfFin}

\subsection{Li-Yorke pairs}
\label{subsec:Li-Yorke}

As we have seen in Proposition~\ref{prop:ProxiBndry}, the notions of negatively, positively and two-sided proximal orbits are all equivalent in almost automorphic subshifts. We will see in Example~\ref{exmpl:WilliamsConstr} that this need not be the case for asymptotic orbits, and that proximal orbits need not be asymptotic. However, as we show below, all of these equivalences hold for almost automorphic subshifts under the additional assumption of property~\ref{item:FinCap} (which includes for example Toeplitz subshifts with separated holes). Additionally, we refer the reader to \cite[Section~3]{Markley_CharSeq} for more results about separating covers with property~\ref{item:FinCap}, especially in the case where \( \Odo = \RR^{n} / \ZZ^{n} \) is the \( n \)-dimensional torus.

\begin{prop}
\label{prop:BproxiAsympt}
Assume that an almost automorphic subshift \( X \) satisfies \ref{item:FinCap}. For any two orbits \( \Orb( x_{1} ) \) and \( \Orb( x_{2} ) \) in \( X \), all of the following notions are equivalent: negatively proximal, positively proximal, two-sided proximal, negatively asymptotic, positively asymptotic and two-sided asymptotic.
\end{prop}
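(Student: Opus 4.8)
The plan is to arrange all six notions into a chain of implications in which only one link actually requires the hypothesis \ref{item:FinCap}, the rest being formal. On the formal side, every asymptotic pair is \emph{a fortiori} a proximal pair in the same direction, and a two-sided asymptotic (resp.\ proximal) pair is both negatively and positively asymptotic (resp.\ proximal); these observations give us, for free, that two-sided asymptotic implies all five remaining notions, and that each one-sided asymptotic notion implies the corresponding one-sided proximal notion. Proposition~\ref{prop:ProxiBndry} then tells us that the three proximality notions are already equivalent to one another and to the equality \( \FactOdo_X( \Orb( x_1 ) ) = \FactOdo_X( \Orb( x_2 ) ) \), without using \ref{item:FinCap} at all. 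Consequently, the whole statement reduces to a single implication: that \( \FactOdo_X( \Orb( x_1 ) ) = \FactOdo_X( \Orb( x_2 ) ) \) forces \( \Orb( x_1 ) \) and \( \Orb( x_2 ) \) to be two-sided asymptotic.

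To prove this implication, I would first replace \( x_2 \) by a suitable shift so that \( \FactOdo_X( x_1 ) = \FactOdo_X( x_2 ) \AsDef \omega \); this is possible precisely because \( \FactOdo_X( \Orb( x_1 ) ) = \FactOdo_X( \Orb( x_2 ) ) \), and it changes none of the six orbit-notions, as they all depend only on \( \Orb( x_2 ) \). The heart of the argument is then to control the difference set \( D \DefAs \{ j \in \ZZ : x_1( j ) \neq x_2( j ) \} \). For each such \( j \), the shifted words \( \Shift^j( x_1 ) \) and \( \Shift^j( x_2 ) \) both lie in \( \FactOdo_X^{-1}( \Rot^j( \omega ) ) \) but take different values at the origin, so the characterisation of the boundary in Equation~\eqref{eq:BndryValues} yields \( \Rot^j( \omega ) \in B_X \). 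Hence \( D \subseteq \{ j \in \ZZ : \Rot^j( \omega ) \in B_X \} \).

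It remains to convert the finiteness of \( \Orb( \omega ) \cap B_X \) guaranteed by \ref{item:FinCap} into finiteness of \( D \). For this the map \( j \mapsto \Rot^j( \omega ) \) must be injective, which holds whenever \( X \) is aperiodic: then \( \Odo \) is infinite, so its generator has infinite order and acts freely on the orbit. (The periodic case is trivial, since a finite minimal subshift is a single orbit, and every orbit is two-sided asymptotic to itself.) Injectivity lets us embed \( D \) into \( \Orb( \omega ) \cap B_X \), so that \( \Card{ D } \leq \Card{ \Orb( \omega ) \cap B_X } < \infty \). A finite difference set means that \( x_1 \) and \( x_2 \) agree on both half-lines, i.e.\ the pair is two-sided asymptotic, which closes the cycle. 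The only genuine subtlety — the one step needing an argument rather than bookkeeping — is exactly this passage from a finite orbit-boundary intersection to a finite difference set, where injectivity of \( j \mapsto \Rot^j( \omega ) \) is essential; without \ref{item:FinCap} the set \( D \) may well be infinite, as happens for the generalised Oxtoby examples of Proposition~\ref{prop:OxtoNotB}.
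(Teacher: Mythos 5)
Your proposal is correct and follows essentially the same route as the paper's proof: reduce everything to showing that proximal orbits are two-sided asymptotic, normalise so that \( \FactOdo_{X}( x_{1} ) = \FactOdo_{X}( x_{2} ) \), and use Equation~\eqref{eq:BndryValues} to embed the difference set into \( \{ j \in \ZZ : \Rot^{j}( \omega ) \in B_{X} \} \), which \ref{item:FinCap} makes finite. Your explicit remark that one needs injectivity of \( j \mapsto \Rot^{j}( \omega ) \) (guaranteed in the aperiodic case, with the periodic case trivial) to pass from a finite set \( \Orb( \omega ) \cap B_{X} \) to a finite index set is a point the paper leaves implicit, and is a welcome extra bit of care rather than a deviation.
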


\begin{proof}
It is clear that two-sided asymptotic implies negatively asymptotic and positively asymptotic, and that each of them implies proximal (all proximality notions are equivalent by Proposition~\ref{prop:ProxiBndry}). Thus, it only remains to show that proximal orbits are  two-sided asymptotic. 

Let hence \( \Orb( x_{1} ) \) and \( \Orb( x_{2} ) \) be proximal orbits. By Proposition~\ref{prop:ProxiBndry} this implies \( \FactOdo_{X}( \Orb( x_{1} ) ) = \FactOdo_{X}( \Orb( x_{2} ) ) \). Since every finite shift of \( x_{2} \) defines the same orbit as \( x_{2} \), we can assume without loss of generality that \( \FactOdo_{X}( x_{1} ) = \FactOdo_{X}( x_{2} ) \) holds. Since the subshift is almost automorphic, it is generated by a separating cover, see Section~\ref{subsec:SepCover}. Let \( B_{X} \) denote the cover's boundary. After a rotation by \(\Rot^{j} \), Equation~\eqref{eq:BndryValues} yields 
\[ \{ j \in \ZZ : x_{1}( j ) \neq x_{2}( j ) \} \subseteq \{ j \in \ZZ : \Rot^{j}( \FactOdo_{X}( x_{1} ) ) \in B_{X} \}, \]
where the right hand side is finite by assumption. Thus, \( x_{1} \) and \( x_{2} \) form a two-sided asymptotic pair, and hence \( \Orb( x_{1} ) \) and \( \Orb( x_{2} ) \) are two-sided asymptotic orbits.
\end{proof}

\begin{cor}
Assume that an almost automorphic subshift \( X \) satisfies \ref{item:FinD}. Then \( X \) has a finite number of asymptotic components. In particular, its automorphism group (that is, the set of all homeomorphisms \(X \to X \) that commute with the shift) is virtually \( \ZZ \).
\end{cor}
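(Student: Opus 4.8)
The plan is to derive the finiteness of the number of asymptotic components from \ref{item:FinD}, and then to obtain the statement about the automorphism group as a direct application of \cite[Theorem~3.1]{DonoDuraMaaPeti-AutoGrLowCompl}. I would treat the relevant aperiodic case, since a periodic minimal subshift is finite and carries no non-trivial asymptotic pairs. Recall that \ref{item:FinD} entails \ref{item:FinCap}, so by Proposition~\ref{prop:BproxiAsympt} proximal and two-sided asymptotic orbits coincide; in particular, every non-trivial asymptotic pair $( x_{1} , x_{2} )$ satisfies $\FactOdo_{X}( x_{1} ) = \FactOdo_{X}( x_{2} ) \AsDef \omega$ by Proposition~\ref{prop:PaulAlmAutoMef}, and since the fibre over $\omega$ then has at least two elements, the same proposition forces $\Orb( \omega ) \cap B_{X} \neq \emptyset$.

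The first genuine step is to bound the fibres of $\FactOdo_{X}$. For $\omega \in \Odo$ I would show that any two elements $y_{1} , y_{2} \in \FactOdo_{X}^{-1}( \omega )$ agree outside the set $S_{\omega} \DefAs \{ j \in \ZZ : \Rot^{j}( \omega ) \in B_{X} \}$: if $j \notin S_{\omega}$, then $\Rot^{j}( \omega ) \notin B_{X}$, so by the boundary characterisation in Equation~\eqref{eq:BndryValues} all preimages of $\Rot^{j}( \omega )$ carry the same letter at the origin, and applying this to $\Shift^{j}( y_{1} )$ and $\Shift^{j}( y_{2} )$ gives $y_{1}( j ) = y_{2}( j )$. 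As in the proof of Proposition~\ref{prop:ElemToOrb}, aperiodicity makes the rotation on the infinite factor $\Odo$ free, so $j \mapsto \Rot^{j}( \omega )$ is injective and $\Card{ S_{\omega} } \leq \Card{ B_{X} } < \infty$ by \ref{item:FinD}. Restriction to $S_{\omega}$ therefore embeds $\FactOdo_{X}^{-1}( \omega )$ into $\Alphab^{S_{\omega}}$, whence $\Card{ \FactOdo_{X}^{-1}( \omega ) } \leq \Card{ \Alphab }^{\Card{ B_{X} }}$.

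Next I would count the components. Since $B_{X}$ is finite, only finitely many orbits $\Orb( \omega )$ meet it, and I fix one representative $\omega$ in each such orbit. Using the diagonal shift I normalise every non-trivial asymptotic pair so that its common image is one of these chosen representatives, which does not alter its asymptotic component; a normalised pair is then a pair of distinct elements of $\FactOdo_{X}^{-1}( \omega )$. Because the rotation is free, two normalised pairs with the same image $\omega$ lie in the same component only if they are equal, so the number of components mapping to $\Orb( \omega )$ is at most $\Card{ \FactOdo_{X}^{-1}( \omega ) }^{2}$, and the total number of asymptotic components is bounded by $\Card{ B_{X} } \cdot \Card{ \Alphab }^{2 \Card{ B_{X} }}$, in particular finite. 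Finally, \cite[Theorem~3.1]{DonoDuraMaaPeti-AutoGrLowCompl} then shows that $\operatorname{Aut}( X , \Shift ) / \langle \Shift \rangle$ is finite; as $X$ is aperiodic, $\Shift$ has infinite order, so $\langle \Shift \rangle \cong \ZZ$ is a central (hence normal) subgroup of finite index, which is precisely the assertion that $\operatorname{Aut}( X , \Shift )$ is virtually $\ZZ$. I expect the main obstacle to be the bookkeeping in the counting step: one must control not merely the number of proximal components (orbits meeting $B_{X}$) but also the number of orbits within each component and the asymptotic pairs they form, and check that the normalisation via the diagonal shift produces no overcounting.
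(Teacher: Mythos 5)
Your proof is correct and takes essentially the same route as the paper: \ref{item:FinD} gives \ref{item:FinCap}, Proposition~\ref{prop:BproxiAsympt} identifies asymptotic with proximal orbits, the components are then indexed by the finitely many orbits of \( \Odo \) meeting \( B_{X} \), and \cite[Theorem~3.1]{DonoDuraMaaPeti-AutoGrLowCompl} finishes the argument. Your additional fibre bound \( \Card{ \FactOdo_{X}^{-1}( \omega ) } \leq \Card{ \Alphab }^{ \Card{ B_{X} } } \) and the subsequent pair-counting are correct but not needed: with the paper's convention that an asymptotic component is a non-trivial equivalence class of orbits, each component is exactly \( \FactOdo_{X}^{-1}( \Orb( \omega ) ) \) for an orbit \( \Orb( \omega ) \) intersecting \( B_{X} \), so the total count is at most \( \Card{ B_{X} } \) with no control on fibre cardinalities required.
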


\begin{proof}
As noted earlier, \ref{item:FinD} is equivalent to \ref{item:FinOrb} and \ref{item:FinCap}. Hence \( X \) has a finite number of asymptotic components. Moreover it follows from Proposition~\ref{prop:BproxiAsympt} that proximal and asymptotic components are the same. The last part of the assertion is now just an application of \cite[Theorem~3.1]{DonoDuraMaaPeti-AutoGrLowCompl}.
\end{proof}

\begin{cor}
\label{cor:LiYorkeEntroZero}
Assume that an almost automorphic subshift \( X \) satisfies \ref{item:FinD}. Then \( X \) has topological entropy zero.
\end{cor}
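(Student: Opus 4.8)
The plan is to show that the factor map \( \FactOdo_{X} \colon X \to \Odo \) is uniformly bounded-to-one, and then to conclude from the fact that a bounded-to-one extension of a zero-entropy system has zero entropy. First I would dispose of the degenerate case: if \( \Odo \) is finite, then, as recalled in Section~\ref{subsec:Subshifts}, the subshift \( X \) is finite and hence trivially of entropy zero. So assume from now on that \( \Odo \) is infinite, that is, that \( X \) is aperiodic. Then the rotation \( \Rot \) has no periodic points, because \( \Rot^{n}(\omega)=\omega \) for some \( n\neq 0 \) would give \( \Rot^{n}=\mathrm{id} \) (the rotation acts by \( \omega\mapsto\omega+g \), so \( ng=0 \) and \( \langle g\rangle \) is finite), forcing \( \Odo \) to be finite. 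Consequently, for every \( \omega\in\Odo \) the orbit points \( \Rot^{j}(\omega) \), \( j\in\ZZ \), are pairwise distinct.

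Next I would bound the fibres uniformly. Fix \( \omega\in\Odo \). Since the orbit of \( \omega \) is injective and \( B_{X} \) is finite by~\ref{item:FinD}, the set \( \{ j\in\ZZ : \Rot^{j}(\omega)\in B_{X} \} \) has at most \( \Card{B_{X}} \) elements. By the separating-cover description in Proposition~\ref{prop:PaulAlmAutoMef}, any \( y\in\FactOdo_{X}^{-1}(\omega) \) satisfies \( \Rot^{j}(\omega)\in C_{y(j)} \) for all \( j \); whenever \( \Rot^{j}(\omega)\notin B_{X} \), the point \( \Rot^{j}(\omega) \) lies in the interior of a unique \( C_{a} \), so the letter \( y(j) \) is forced, while at the (at most \( \Card{B_{X}} \)) boundary positions there are at most \( \Card{\Alphab} \) choices. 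Hence the restriction map \( y\mapsto ( y(j) )_{j:\,\Rot^{j}(\omega)\in B_{X}} \) is injective on \( \FactOdo_{X}^{-1}(\omega) \), which yields the uniform bound \( \Card{ \FactOdo_{X}^{-1}(\omega) } \le \Card{\Alphab}^{\Card{B_{X}}} \) independent of \( \omega \). Thus \( \FactOdo_{X} \) is uniformly bounded-to-one.

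Finally I would combine this with the entropy of the base. As a compact group rotation, \( ( \Odo , \Rot ) \) is equicontinuous and hence has topological entropy zero. Bowen's inequality for factor maps gives \( h_{\mathrm{top}}( X , \Shift ) \le h_{\mathrm{top}}( \Odo , \Rot ) + \sup_{\omega\in\Odo} h( \Shift , \FactOdo_{X}^{-1}(\omega) ) \), where the fibrewise term is Bowen's entropy of the (non-invariant) fibre. Each fibre is finite, and a finite set is \( (n,\varepsilon) \)-spanning for itself for every \( n \), so its Bowen entropy vanishes; therefore the supremum is zero and we conclude \( h_{\mathrm{top}}( X , \Shift )=0 \), entropy being nonnegative. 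The main obstacle is precisely this last step: one must quote (or reprove) Bowen's inequality together with the vanishing of fibrewise entropy for finite fibres. If one prefers to stay purely combinatorial, the uniform fibre bound can instead be fed into a direct estimate of the complexity: covering \( \Odo \) by the \( \Rot \)-refinements of the cover \( \{ C_{a} \} \) and using equicontinuity to control the number of length-\( L \) codings away from the finitely many boundary positions yields subexponential growth of \( \cplx{x}(L) \), hence entropy zero via \( h_{\mathrm{top}}(X)=\lim_{L}\frac{1}{L}\log\cplx{x}(L) \). Either way, the content is that a finite boundary forces uniformly bounded fibres, which cannot create exponential complexity over the zero-entropy base.
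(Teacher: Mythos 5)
Your proof is correct, but it takes a genuinely different route from the paper. The paper derives the corollary from Proposition~\ref{prop:BproxiAsympt}: under \ref{item:FinCap} (hence under \ref{item:FinD}) every proximal pair of orbits is asymptotic, so \( X \) has no Li-Yorke pairs, and the conclusion follows from the theorem of Blanchard--Glasner--Kolyada--Maass that positive topological entropy forces an uncountable set of pairwise Li-Yorke pairs. You instead bound the fibres of \( \FactOdo_{X} \) directly: since the orbit of any \( \omega \) in the infinite maximal equicontinuous factor is injective and each non-boundary point of the orbit forces the corresponding letter, the fibre \( \FactOdo_{X}^{-1}( \omega ) \) injects into \( \Alphab^{ \Orb( \omega ) \cap B_{X} } \), giving the uniform bound \( \Card{ \Alphab }^{ \Card{ B_{X} } } \); Bowen's inequality over the zero-entropy base then finishes the argument, as finite fibres have vanishing fibrewise entropy. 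Both arguments are sound and both outsource the entropy step to an external result (BGKM versus Bowen). The paper's route is essentially free given Proposition~\ref{prop:BproxiAsympt}, which has already been proved, and it fits the Li-Yorke theme of the section. Your route is independent of that proposition, yields as a by-product the explicit uniform fibre bound \( \Card{ \FactOdo_{X}^{-1}( \omega ) } \le \Card{ \Alphab }^{ \Card{ B_{X} } } \) (a fact of independent interest --- compare the hypothesis \( \Card{ \FactOdo_{X}^{-1}( \omega ) } < \infty \) in Proposition~\ref{prop:FiniteFiber}), and in fact only uses that each orbit meets \( B_{X} \) finitely often, so like the paper's proof it establishes the conclusion already under the weaker hypothesis \ref{item:FinCap}. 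One cosmetic remark: you do not need that a non-boundary point \( \Rot^{j}( \omega ) \) lies in the \emph{interior} of a unique \( C_{a} \) (though this is true, since the union of the other \( C_{b} \)'s is closed); membership in a unique \( C_{a} \) already forces the letter \( y( j ) \).
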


\begin{proof}
By Proposition~\ref{prop:BproxiAsympt}, an almost automorphic subshift with property \ref{item:FinD} has no Li-Yorke pairs. Our claim thus follows from \cite[Corollary~2.4]{BGKM_LiYorke}, which asserts that positive entropy implies that there are uncountably many elements which pairwise form Li-Yorke pairs. Note that in \cite{BGKM_LiYorke}, positively proximal and positively asymptotic pairs are considered, but in the setting of Proposition~\ref{prop:BproxiAsympt} they are equivalent to negatively or two-sided pairs.
\end{proof}

In absence of property~\ref{item:FinCap}, proximal and asymptotic orbits may or may not be the same. In fact, both types of behaviour can occur within the class of generalised Oxtoby subshifts (which never satisfy~\ref{item:FinCap}, see Proposition~\ref{prop:OxtoNotB}): in Example~\ref{exmpl:OxtProxiAsym} we will encounter an Oxtoby subshift where every proximal orbit is asymptotic; in particular, it follows that condition~\ref{item:FinCap} in Proposition~\ref{prop:BproxiAsympt} is not necessary. However, in Proposition~\ref{prop:FiniteFiber} we will show that ``non-\ref{item:FinCap} plus certain conditions'' is enough to ensure that the equivalence of proximal and asymptotic orbits fails, and in Example~\ref{exmpl:WilliamsConstr} we show that there are generalised Oxtoby subshifts to which Proposition~\ref{prop:FiniteFiber} applies.

\begin{exmpl}
\label{exmpl:OxtProxiAsym}
Via a hole-filling procedure (see Section~\ref{subsec:PrelimToepl}) we construct a generalised Oxtoby sequence in which every proximal pair of orbits is also asymptotic. Firstly, for \( l \in \NN \) and \( i \in [ 1 , 2^{l} ] \) we define \( u^{(l)}_{i} \in \{ a , b \}^{[1,2^{l}]} \) to be the word of length \( 2^{l} \) that has a single \( b \) at position \( i \), and value \( a \) at all other positions. Now we set
\begin{align*}
w_{1} \,={} & ba \, ? ? \, ? ? \, ab = u^{(1)}_{1} \; ?^{4} \; u^{(1)}_{2} ,\\
w_{2} \,={} & baaa \, abaa \, ???? \, ???? \, aaba \, aaab = u^{(2)}_{1} \; u^{(2)}_{2} \; ?^{8} \; u^{(2)}_{3} \; u^{(2)}_{4}\\
\text{and in general} \;\; w_{l} \DefAs {} & u^{(l)}_{1} \hdots \, u^{(l)}_{2^{l-1}} \; ?^{2^{l+1}} \; u^{(l)}_{2^{l-1}+1} \hdots \, u^{(l)}_{2^{l}} .
\end{align*}
By induction one shows that the period of \( x_{l} \DefAs w_{1}^{\infty} \triangleleft \hdots \triangleleft w_{l}^{\infty} \) has length \( p_{l} \DefAs 8 \cdot \prod_{n=2}^{l} ( 2^{n}+2 )\) and contains  \( 2^{l+1} \)-many \( ? \)'s. Since the length of \( w_{l+1} \) is \( 2^{l+1} \cdot 2^{l+1} + 2^{l+2} \), we can fill \( w_{l+1} \) into exactly \( 2^{l+1} + 2 \) copies of the period of \( x_{l} \). We note that the first \( 2^{l} \) copies and the last \( 2^{l} \) copies get completely filled with letters \( a \) and \( b \), while the two middle copies remain completely unfilled. It is now not hard to check that \( x \DefAs \lim_{l \to \infty} x_{l} \) defines a generalised Oxtoby sequence with period structure \( (p_{l})_{l \geq 1} \) . We claim that \( \Card{ \{ j \in \Aper( y ) : y( j ) = b \} } \leq 1 \) holds for every \( y \in X_{x} \). Thus, if \( \Orb(y) \) and \( \Orb(z) \) are proximal orbits, and \( n \in \ZZ \) is such that \( y \) and \( \Shift^{n}( z ) \) are a proximal pair, then \( y \) and \( \Shift^{n}( z ) \) differ in at most two positions: by Proposition~\ref{prop:AperDx} they are equal on \( \ZZ \setminus \Aper( y ) \), and in addition they clearly agree on all positions of \( \Aper( y ) \) where both of them have value \( a \). Hence \( \Orb(y)\) and \( \Orb(z) \) are actually asymptotic orbits. To show the claim, assume that there exist \( y \in X_{x} \) and \( i \neq j \in \Aper( y ) \) with \( y( i ) = y ( j ) = b \). We choose \( l \in \NN \) large enough such that \( \lvert i - j \rvert < p_{l-1} \) holds, and we let \(  (n_{k})_{k} \) denote a sequence with \( \lim_{k\to \infty} \Shift^{n_{k}}( x ) = y \). Because of  \( i , j \in \Aper( y ) \) we also have \( i , j \in \Aper( p_{l} , y ) \), and therefore \( i , j \in \Aper( p_{l} , \Shift^{n_{k}}( x ) ) \) for all sufficiently large \( k \). In particular, \( i+n_{k} \) and \( j+n_{k} \) are \( p_{l} \)-holes in \( x \). Because of \( \lvert i - j \rvert < p_{l-1} \), they lie in the same \( p_{l} \)-block of \( x \) (note that there are by construction no \( p_{l}\)-holes in the beginning and the end of the \( p_{l}\)-blocks). Consequently,  the values of \( \Shift^{n_{k}}( x )( i ) \)  and \(  \Shift^{n_{k}}( x )( j ) \) are given by letters in the same word \( u_{i}^{(n)} \) for a suitable \( n \geq l+1\). Hence at most one of the positions has value \( b \). However, the convergence \( \Shift^{n_{k}}( x ) \to y \) implies \( \Shift^{n_{k}}( x )( i ) =  \Shift^{n_{k}}( x )( j ) = b \) for all large \( k \), yielding the desired contradiction.
\end{exmpl}

\begin{prop}
\label{prop:FiniteFiber}
Let \( X \) be an almost automorphic subshift. We denote by \( \Odo \) its maximal equicontinuous factor and by \( B_{X} \) the boundary of a separating cover that generates \( X \). If there exists \( \omega \in \Odo \) with \( \Card{ \Orb( \omega ) \cap B_{X} } = \infty \) and \( \Card{ \FactOdo_{X}^{-1}( \omega ) } < \infty \), then there are orbits \( \Orb( x ) \), \( \Orb( y ) \) in \( X \) which are proximal but not two-sided asymptotic.
\end{prop}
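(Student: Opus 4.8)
The plan is to exploit the finite fiber to turn the problem into a pigeonhole argument over a fixed finite set of preimages, and then to use the equivalence ``proximal $\Leftrightarrow$ equal image in the maximal equicontinuous factor'' to forbid any shift from aligning the two resulting orbits into a two-sided asymptotic pair.

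First I would record what \( \Card{ \Orb( \omega ) \cap B_{X} } = \infty \) forces: a finite orbit can meet \( B_{X} \) only finitely often, so the orbit of \( \omega \) must be infinite, the map \( n \mapsto \Rot^{n}( \omega ) \) is injective, and \( \{ n \in \ZZ : \Rot^{n}( \omega ) \in B_{X} \} \) is infinite. Writing the finite fiber as \( \FactOdo_{X}^{-1}( \omega ) = \{ y_{1} , \hdots , y_{r} \} \), the intertwining \( \FactOdo_{X} \circ \Shift = \Rot \circ \FactOdo_{X} \) gives \( \FactOdo_{X}^{-1}( \Rot^{n}( \omega ) ) = \{ \Shift^{n}( y_{1} ) , \hdots , \Shift^{n}( y_{r} ) \} \) for every \( n \). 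Substituting this fiber into the boundary description~\eqref{eq:BndryValues} shows that \( \Rot^{n}( \omega ) \in B_{X} \) holds exactly when some pair \( y_{i} , y_{j} \) satisfies \( \Shift^{n}( y_{i} )( 0 ) \neq \Shift^{n}( y_{j} )( 0 ) \), that is, \( y_{i}( n ) \neq y_{j}( n ) \). Hence
\[
\{ n \in \ZZ : \Rot^{n}( \omega ) \in B_{X} \} = \bigcup_{i < j} \{ n \in \ZZ : y_{i}( n ) \neq y_{j}( n ) \} .
\]

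Since the left-hand side is infinite and the union ranges over the finitely many pairs \( ( i , j ) \), at least one pair \( y_{i} , y_{j} \) differs at infinitely many positions. These two elements share the image \( \omega \), so they are proximal by Proposition~\ref{prop:PaulAlmAutoMef}, and \( y_{i} \neq y_{j} \) gives \( \Orb( y_{i} ) \neq \Orb( y_{j} ) \) by Proposition~\ref{prop:ElemToOrb}. It then remains to see that these orbits are not two-sided asymptotic. The decisive observation is that any alignment \( y_{i} , \Shift^{n}( y_{j} ) \) witnessing two-sided asymptoticity would in particular be negatively proximal, and hence would satisfy \( \FactOdo_{X}( y_{i} ) = \FactOdo_{X}( \Shift^{n}( y_{j} ) ) \); but \( \FactOdo_{X}( \Shift^{n}( y_{j} ) ) = \Rot^{n}( \omega ) \) equals \( \omega = \FactOdo_{X}( y_{i} ) \) only for \( n = 0 \), the orbit of \( \omega \) being injective. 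Thus the only possible alignment is \( n = 0 \), and \( y_{i} , y_{j} \) themselves differ at infinitely many positions, so \( \Metric( \Shift^{m}( y_{i} ) , \Shift^{m}( y_{j} ) ) \) does not tend to \( 0 \) as \( \lvert m \rvert \to \infty \).

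The step I expect to require the most care is precisely this reduction from orbits back to elements: the hypothesis and the conclusion are phrased for orbits, yet the pigeonhole step only produces a single pair of elements, so one must exclude that some nonzero shift could nonetheless render the orbits two-sided asymptotic. This is exactly where the injectivity of the orbit of \( \omega \) together with the ``proximal in at least one direction \( \Leftrightarrow \) equal factor image'' equivalence from Proposition~\ref{prop:PaulAlmAutoMef} does the essential work, pinning the only admissible alignment to \( n = 0 \) and thereby transferring the infinite mismatch of the chosen elements to a genuine statement about their orbits.
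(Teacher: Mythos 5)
Your proposal is correct and follows essentially the same route as the paper: both use~\eqref{eq:BndryValues} to write \( \{ n \in \ZZ : \Rot^{n}( \omega ) \in B_{X} \} \) as a finite union of difference sets over pairs in the fiber, apply pigeonhole to extract a pair differing infinitely often, and conclude proximality from the shared factor image. The only cosmetic difference is that you spell out explicitly why no nonzero shift can realign the two orbits into an asymptotic pair, whereas the paper delegates exactly this argument to its Proposition~\ref{prop:ElemToOrb}.
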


\begin{proof}
For \( x , y \in \FactOdo_{X}^{-1}( \omega ) \) we define \( A_{x , y } \DefAs \{ j \in \ZZ : x( j ) \neq y( j ) \} \). By \eqref{eq:BndryValues} we have \( \{ j \in \ZZ : \Rot^{j}( \omega ) \in B_{X} \} = \bigcup_{ x ,y \in \FactOdo_{X}^{-1}( \omega ) } A_{ x , y } \). As the union is finite (since \( \FactOdo_{X}^{-1}( \omega ) \) is finite) and the left hand side is infinite, there exist \( x ,y \in \FactOdo_{X}^{-1}( \omega ) \) such that \( A_{ x , y } \) is infinite. In particular, \(  x \) and \( y \) are not a two-sided asymptotic pair. By Proposition~\ref {prop:ElemToOrb} it follows that also the orbits \( \Orb( x ) \), \( \Orb( y ) \) are not asymptotic. It only remains to notice that \( \FactOdo_{X}( x ) = \FactOdo_{X}( y ) \) implies \( \Orb( \FactOdo_{X}( x ) ) = \Orb( \FactOdo_{X}( y ) ) \), and that \( \Orb( x) , \Orb( y ) \) are therefore proximal orbits by Proposition~\ref{prop:ProxiBndry}.
\end{proof}

To give an example of a subshift that satisfies the assumptions of Proposition~\ref{prop:FiniteFiber}, we briefly recall the Oxtoby construction of Williams \cite{Wil_ToepNotUniqErgod}. (A different type of Oxtoby subshift that also satisfies the assumptions of Proposition~\ref{prop:FiniteFiber} will be discussed in Example~\ref{exmpl:OxtWithFactor} and Remark~\ref{rem:ProxiNotAsympt}). For consistency with the rest of our article, we consider only a subset of Williams' examples by imposing two additional restrictions on the construction: firstly, we present the construction and results only for finite alphabets (where \cite{Wil_ToepNotUniqErgod} allows infinite compact alphabets), and secondly we keep the restriction on generalised Oxtoby sequences that at least two intervals per step are not filled (where \cite{Wil_ToepNotUniqErgod} requires only one such interval). Note that this construction also provides examples of one-side asymptotic pairs which are not two sided asymptotic.

\begin{exmpl}[{\cite[Section~3]{Wil_ToepNotUniqErgod}}]
\label{exmpl:WilliamsConstr}
Fix a sequence \( ( a_{l} ) \) in \( \Alphab \) which contains every letter of the alphabet infinitely often, and a sequence \( (p_{l}) \) in \( \NN \) with \( p_{l} \mid p_{l+1} \) and \( \frac{ p_{l} }{ p_{l-1} } \geq 4 \). We start with a completely unfilled, two-sided infinite word and successively fill the holes: in step \( l \), we fill all holes in \( [ -p_{l-1} , -1 ] + p_{l} \ZZ \) and in \( [ 0, p_{l-1} -1 ] + p_{l} \ZZ\) with the letter \( a_{l} \) and leave all other \( p_{l-1} \)-intervals unfilled (because of \( \frac{ p_{l } }{ p_{l-1} } \geq 4 \), there are at least two of them). The result is a sequence \( x \) which is generalised Oxtoby with respect to the period structure \( (p_{l}) \). By \cite[Lemma~3.3]{Wil_ToepNotUniqErgod}, every \( y \in X_{x} \) is constant on \( \Aper( y ) \). In particular, every non-trivial proximal pair \( y \neq z \) differs on all positions of \( \Aper( y ) \). Any such pair with \( \Aper(y) \subseteq \NN \) and \( \Card{ \Aper( y ) } = \infty \) is therefore negatively asymptotic, but not positively asymptotic (and clearly such a pair exists: since generalised Oxtoby subshifts are aperiodic, there is a pair with \( \Aper(y) \subseteq \NN \) and by Proposition~\ref{prop:OxtoNotB}, \( \Aper(y) \) is infinite). Moreover, every \( y \) is uniquely determined by \( \FactOdo_{x}( y ) \in \Odo \) and by the value \( a \in \Alphab \) that \( y \) takes on \( \Aper( y ) \). This implies \( \Card{ \FactOdo_{x}^{-1}( \omega ) } \leq \Card{ \Alphab } < \infty \) for all \( \omega \in \Odo \). As a side note, we remark that in Williams' original setting with a compact alphabet the same reasoning yields an uncountable set of pairwise proximal, non-asymptotic elements. In addition it is worth pointing out that by \cite[Section~5]{Wil_ToepNotUniqErgod}, all subshifts described in this example have entropy zero.
\end{exmpl}

\subsection{The Toeplitz case}

For the remainder of our article, we change our focus from general almost automorphic subshifts to Toeplitz subshifts. We start with several observations relating condition \ref{item:FinOrb} to properties of the subshift.

\begin{prop}
\label{prop:ARegular}
If a Toeplitz subshift satisfies \ref{item:FinOrb}, then it is regular.
\end{prop}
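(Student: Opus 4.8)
The plan is to translate regularity into a statement about the Haar measure of the boundary's orbit-saturation, and then use \ref{item:FinOrb} to show this saturation is countable, hence null. Let \( \mu \) denote the Haar measure on \( \Odo \) and recall that the Toeplitz subshift is regular precisely when the density \( d_{l} \DefAs \frac{ \Card{ \Aper( p_{l} , x ) \cap [ 0 , p_{l}-1 ] } }{ p_{l} } \) of \( p_{l} \)-holes tends to \( 0 \). For each \( l \) I would consider the clopen set \( A_{l} \DefAs \{ \omega \in \Odo : \omega( l ) \in \Aper( p_{l} , x ) \} \). Since the projection \( \omega \mapsto \omega( l ) \) pushes \( \mu \) forward to the uniform measure on \( \ZZ / p_{l} \ZZ \), and since membership in \( \Aper( p_{l} , x ) \) depends only on the residue modulo \( p_{l} \), one obtains \( \mu( A_{l} ) = d_{l} \). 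The inclusions \( \Aper( p_{l+1} , x ) \subseteq \Aper( p_{l} , x ) \) give \( A_{l+1} \subseteq A_{l} \), so by continuity of measure \( \mu( \bigcap_{l} A_{l} ) = \lim_{l} d_{l} \). Thus the subshift is regular if and only if \( \mu( \bigcap_{l} A_{l} ) = 0 \).

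The heart of the argument is the inclusion \( \bigcap_{l} A_{l} \subseteq \{ \omega \in \Odo : \Orb( \omega ) \cap B_{X} \neq \emptyset \} \). To see this, I would fix \( \omega \in \bigcap_{l} A_{l} \) and any \( y \in \FactOdo_{x}^{-1}( \omega ) \). By~\eqref{eqn:UniqueShift} the words \( y \) and \( \Shift^{ \omega( l ) }( x ) \) agree on their \( p_{l} \)-periodic parts, so position \( 0 \) of \( y \) is \( p_{l} \)-periodic exactly when \( \omega( l ) \in \Per( p_{l} , x ) \). Since \( \omega \in A_{l} \) means \( \omega( l ) \in \Aper( p_{l} , x ) \), this fails for every \( l \), whence \( 0 \in \Aper( y ) \). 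Therefore \( y \) is a Toeplitz orbital, and Proposition~\ref{prop:AperDx}~\ref{item:A1} yields that \( \Orb( \FactOdo_{x}( y ) ) = \Orb( \omega ) \) intersects \( B_{X} \), as claimed.

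To finish, I would invoke \ref{item:FinOrb}: only finitely many orbits meet \( B_{X} \), so the set \( \{ \omega : \Orb( \omega ) \cap B_{X} \neq \emptyset \} \) is a finite union of \( \Rot \)-orbits and hence countable. For a non-periodic Toeplitz subshift the odometer \( \Odo \) is infinite, so \( \mu \) is non-atomic (each cylinder \( [ \omega ]_{l} \) has measure \( 1 / p_{l} \to 0 \)) and countable sets are \( \mu \)-null. By the inclusion above, \( \mu( \bigcap_{l} A_{l} ) = 0 \), so the subshift is regular; the periodic case is regular trivially.

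The main obstacle I anticipate is the bookkeeping in the key inclusion—specifically, verifying via~\eqref{eqn:UniqueShift} that a point lying in every \( A_{l} \) forces position \( 0 \) to be aperiodic in each fibre element, which is what converts a statement about hole density into one about the boundary. The measure-theoretic steps are routine once the computation \( \mu( A_{l} ) = d_{l} \) and the non-atomicity of \( \mu \) are in place.
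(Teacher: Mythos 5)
Your proof is correct. It follows the same skeleton as the paper's argument—reduce regularity to a Haar-measure-zero statement about the boundary, then observe that \ref{item:FinOrb} confines the relevant set to finitely many countable \( \Rot \)-orbits, which are null because the measure is non-atomic—but you replace the paper's citation of the equivalence ``\( x \) regular \( \Longleftrightarrow \) \( \mu( B_{X} ) = 0 \)'' (which the paper takes from Downarowicz--Iwanik, Remark~1, or Downarowicz's survey, Theorem~13.1) with a direct derivation via the nested clopen sets \( A_{l} \). Your computation \( \mu( A_{l} ) = d_{l} \) together with \( A_{l+1} \subseteq A_{l} \) and the key inclusion (which in fact shows \( \bigcap_{l} A_{l} \subseteq B_{X} \), not merely containment in the orbit-saturation, since \( 0 \in \Aper( y ) \) already forces \( \omega \in B_{X} \) by Proposition~\ref{prop:AperDx}~\ref{item:A1}) makes the proof self-contained and, as a by-product, reproves one direction of the cited equivalence. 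What the paper's route buys is brevity; what yours buys is independence from the external reference and an explicit identification of \( \lim_{l} d_{l} \) with \( \mu( \bigcap_{l} A_{l} ) \), which also makes transparent the remark following the proposition that countably many boundary orbits would suffice.
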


\begin{proof}
The regularity of \( x \) is equivalent to \( B_{X} \) having measure zero for the Haar measure of the odometer, see \cite[Remark~1]{DownIwan_QuasiUnifConv} or \cite[Theorem~13.1]{Downa_OdomToepl}. Since every orbit \( \Orb( \omega ) = \{ \Rot^{n}( \omega ) : n \in \ZZ \} \) is countable, and \( B_{X} \) is contained in finitely many of them, \( B_{X} \) is countable. If \( x \) is non-periodic, we use that every countable subset of the odometer has measure zero, and if \( x \) is periodic, then \( B_{X} \) is empty and hence trivially of measure zero.
\end{proof}

In connection with Proposition~\ref{prop:ARegular}, besides \cite[Remark~1]{DownIwan_QuasiUnifConv} (relating the regularity of \( x \) to measure zero of the cover's boundary) we would also like to mention \cite[Theorem~1]{BJL_ToeplModelSet}. There it is shown that the density of non-periodic positions in \( x \) is equal to the Haar-measure of the CPS window's boundary, thus relating the regularity of \( x \) to measure zero of the CPS window's boundary.

\begin{cor}
\label{cor:UEzeroEntro}
If a Toeplitz subshift satisfies \ref{item:FinOrb}, then it is uniquely ergodic and has topological entropy zero.
\end{cor}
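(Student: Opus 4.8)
The plan is to reduce everything to regularity and then appeal to the classical theory of regular Toeplitz systems. First I would apply Proposition~\ref{prop:ARegular}, which already tells us that a Toeplitz subshift satisfying \ref{item:FinOrb} is regular. This reduces the corollary to the known statement that every regular Toeplitz subshift is uniquely ergodic and has zero topological entropy, and from there the two conclusions can be handled separately.

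For the entropy I would give a direct complexity estimate rather than quote it. Write \( \eta_{l} \DefAs \Card{ \Aper( p_{l} , x ) \cap [ 0 , p_{l} - 1 ] } \) for the number of \( p_{l} \)-holes per period, so that regularity is precisely the condition \( \eta_{l} / p_{l} \to 0 \). Any factor of \( x \) of length \( L \) is determined by the residue of its starting position modulo \( p_{l} \) (contributing at most \( p_{l} \) possibilities) together with the letters it places on its non-\( p_{l} \)-periodic positions; since a window of length \( L \) meets at most \( ( \eta_{l} / p_{l} ) L + \eta_{l} \) holes, this yields \( \cplx{x}( L ) \leq p_{l} \cdot \Card{ \Alphab }^{ ( \eta_{l} / p_{l} ) L + \eta_{l} } \). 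Taking logarithms, dividing by \( L \) and letting \( L \to \infty \) bounds the topological entropy by \( ( \eta_{l} / p_{l} ) \log \Card{ \Alphab } \); since this holds for every \( l \) and \( \eta_{l} / p_{l} \to 0 \), the entropy must be zero.

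For unique ergodicity I would lean on the literature, as this is exactly the situation for which the relevant machinery was developed: regular Toeplitz systems are strictly ergodic, which is treated via the quasi-uniform convergence techniques of \cite{DownIwan_QuasiUnifConv} and also recorded in \cite{Downa_OdomToepl}. Combined with the minimality of Toeplitz subshifts (\cite[Theorem~4]{JacobsKeane_01Toeplitz}), this gives the full strict ergodicity and in particular unique ergodicity.

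The genuinely external input is unique ergodicity of regular Toeplitz subshifts, which I would cite rather than reprove; the entropy bound is elementary. The only point requiring care in the entropy estimate is the bookkeeping of how many holes a length-\( L \) window can contain, and the observation that the phase-modulo-\( p_{l} \) ambiguity contributes merely a factor \( p_{l} \), which is negligible after dividing by \( L \) and letting \( L \to \infty \).
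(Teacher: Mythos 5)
Your proposal is correct and follows the same route as the paper: reduce to regularity via Proposition~\ref{prop:ARegular}, then invoke unique ergodicity of regular Toeplitz subshifts from the literature and deduce zero entropy from the definition of regularity (the paper cites \cite[Corollary of Theorem~5]{JacobsKeane_01Toeplitz} for the former and simply asserts the latter, which your explicit complexity bound \( \cplx{x}( L ) \leq p_{l} \cdot \Card{ \Alphab }^{ ( \eta_{l} / p_{l} ) L + \eta_{l} } \) correctly unpacks). The paper additionally remarks that entropy zero also follows from Corollary~\ref{cor:LiYorkeEntroZero}, but your direct estimate is a perfectly valid alternative.
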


\begin{proof}
This follows immediately from Proposition~\ref{prop:ARegular} and the fact, that a regular Toeplitz subshift is always uniquely ergodic (see \cite[Corollary of Theorem~5]{JacobsKeane_01Toeplitz}) and has entropy zero (that follows directly from the definition of regularity). Of course, entropy zero can also be deduced from Corollary~\ref{cor:LiYorkeEntroZero}.
\end{proof}

Note that Proposition~\ref{prop:ARegular} and Corollary~\ref{cor:UEzeroEntro} remain true when~\ref{item:FinOrb} is replaced with the weaker requirement that at most countably many orbits \( \Orb( \omega ) \subseteq \Omega \) intersect \( B_{X} \). We also remark that regularity of a Toeplitz word does not imply~\ref{item:FinCap}, as shown for instance by the existence of regular Oxtoby words (see Example~\ref{exmpl:OxtProxiAsym}).

\begin{prop}
\label{prop:BdHolesFiniteD}
Let \( x \) be a Toeplitz word and let \( B_{X} \) denote the boundary of the separating cover that generates \( X_{x} \). If there exists \( h \in \NN \) with \( \Card{ \Aper( p_{l} , x ) \cap [ 0, p_{l}-1 ] } \leq h \) for all \( l \in \NN \), then \( \Card{ B_{X} } \leq h \) follows, and in particular condition \ref{item:FinD} holds.
\end{prop}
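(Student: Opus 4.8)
The plan is to control, at every finite level \( l \), how the points of \( B_X \) look in the \( l \)-th coordinate of the odometer, and then let \( l \to \infty \). For each \( l \in \NN \) put \( B_X^{(l)} \DefAs \{ \omega( l ) : \omega \in B_X \} \subseteq \ZZ / p_l \ZZ \), identified with a subset of the representatives in \( [ 0 , p_l - 1 ] \). The core of the argument is to establish the inclusion \( B_X^{(l)} \subseteq \Aper( p_l , x ) \cap [ 0 , p_l - 1 ] \) for every \( l \); the hypothesis then yields \( \Card{ B_X^{(l)} } \leq h \) at every level, and since the coordinate maps separate the points of \( \Odo \), this uniform bound will transfer to \( \Card{ B_X } \leq h \).

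To prove the inclusion, fix \( \omega \in B_X \) and \( l \in \NN \). By~\eqref{eq:BndryValues} (applied to the Toeplitz map \( \FactOdo_x \); see also Proposition~\ref{prop:AperDx}~\ref{item:A2}) there are \( y_1 , y_2 \in \FactOdo_x^{-1}( \omega ) \) and letters \( a \neq b \) with \( y_1( 0 ) = a \) and \( y_2( 0 ) = b \). Since \( \FactOdo_x( y_i ) = \omega \), the uniquely determined shift from~\eqref{eqn:UniqueShift} satisfies \( k( y_i , p_l ) = \omega( l ) \), so both \( y_1 \) and \( y_2 \) share their \( p_l \)-periodic part with \( \Shift^{\omega( l )}( x ) \); in particular \( \Per( p_l , y_1 , c ) = \Per( p_l , y_2 , c ) \) for every \( c \in \Alphab \). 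Were the origin \( p_l \)-periodic in \( y_1 \), say \( 0 \in \Per( p_l , y_1 , a ) \), then the common periodic part would force \( y_2( 0 ) = a \), contradicting \( a \neq b \). Hence \( 0 \in \Aper( p_l , y_1 ) = \Aper( p_l , \Shift^{\omega( l )}( x ) ) \), which by definition of the shift means \( \omega( l ) \in \Aper( p_l , x ) \); as \( \Aper( p_l , x ) \) is \( p_l \)-periodic, its representative lies in \( \Aper( p_l , x ) \cap [ 0 , p_l - 1 ] \), proving the inclusion.

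Finally I would deduce the cardinality bound by a separation argument in the inverse limit. Assume \( B_X \) contained \( h + 1 \) distinct points \( \omega_0 , \hdots , \omega_h \). Any two distinct \( \omega , \omega^{\prime} \in \Odo \) differ in some coordinate, and the compatibility relation \( \omega( l + 1 ) \equiv \omega( l ) \bmod p_l \) guarantees that they then differ in all higher coordinates as well. Taking \( L \) to be the largest of the finitely many levels separating the pairs \( \omega_i , \omega_j \), the values \( \omega_0( L ) , \hdots , \omega_h( L ) \) are pairwise distinct, so \( \Card{ B_X^{(L)} } \geq h + 1 \), contradicting \( \Card{ B_X^{(L)} } \leq h \). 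Therefore \( \Card{ B_X } \leq h \); in particular \( B_X \) is finite, which is precisely property~\ref{item:FinD}.

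The only delicate point is the inclusion \( B_X^{(l)} \subseteq \Aper( p_l , x ) \cap [ 0 , p_l - 1 ] \): it rests on the observation that two elements of a common \( \FactOdo_x \)-fibre have identical \( p_l \)-periodic parts, so that a disagreement at the origin can only be located at a \( p_l \)-hole. The concluding passage from bounded coordinate projections to a bound on \( \Card{ B_X } \) is just the standard fact that the coordinate maps separate points of an inverse limit.
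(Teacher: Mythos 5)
Your proof is correct and follows essentially the same route as the paper's: both establish the key inclusion \( \omega( l ) \in \Aper( p_{l} , x ) \cap [ 0 , p_{l} - 1 ] \) for every \( \omega \in B_{X} \) and every level \( l \), and then conclude because the odometer coordinates separate points (the paper phrases this as the \( h \) cylinder sets shrinking to singletons, you phrase it as a finite-level separation of \( h+1 \) hypothetical points). The only cosmetic difference is that you derive \( 0 \in \Aper( p_{l} , y ) \) from~\eqref{eq:BndryValues} via two fibre elements disagreeing at the origin, where the paper invokes Proposition~\ref{prop:AperDx}~\ref{item:A1} directly.
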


\begin{proof}
Let \( \omega \in B_{X} \) and \( y \in \FactOdo_{x}^{-1}( \omega ) \) be arbitrary. By Proposition~\ref{prop:AperDx}~\ref{item:A1} and by the definition of \( \FactOdo_{x} \) this implies for all \( l \in \NN \)
\[ 0 \in \Aper( y ) \subseteq \Aper( p_{l} , y ) = \Aper( p_{l} , x ) - \omega( l ) . \]
Consequently, we have \( \omega( l ) \in \Aper( p_{l} , x ) \cap [0 , p_{l}-1] \). Since \( \omega \in B_{X} \) was arbitrary, this yields
\[ B_{X} \subseteq \bigcup_{j \in \Aper( p_{l} , x ) \cap [ 0, p_{l}-1 ] } [ \Embed( j ) ]_{l} . \]
Since \( \Card{ \Aper( p_{l} , x ) \cap [ 0, p_{l}-1 ] } \leq h \) holds by assumption, the right hand side is a union over at most \( h \) cylinder sets in \(\Odo \), and as each of them converges to a singleton, $ \Card{ B_X } \leq h$ follows.
\end{proof}

However, the converse of Proposition~\ref{prop:BdHolesFiniteD} is not true, as the following example shows.

\begin{exmpl}
\label{ex:UnbdHolesOneBdry}
We construct a Toeplitz sequence \( x \in \{ a, b \}^{\ZZ} \) for which the boundary of the separating cover is a singleton (in particular condition~\ref{item:FinD} holds), while \( \Card{ \Aper( p_{l} , x ) \cap [ 0, p_{l}-1 ] } \) is unbounded. We define \( x \DefAs \lim_{n \to \infty} w_{1}^{\infty} \triangleleft \hdots \triangleleft w_{l}^{\infty} \) via the hole-filling procedure from the following finite words with holes
\begin{align*}
w_{1} & \DefAs a^{1} (? ?)^{1} b^{1} = a??b &
w_{2} & \DefAs (aa)^{1} (?a)^{1} (?b)^{1} (bb)^{1} = aa?a?bbb\\
w_{3} & \DefAs a^{2} (??)^{2} b^{2} = aa????bb &
w_{4} & \DefAs (aa)^{2} (?a)^{2} (?b)^{2} (bb)^{2}\\
&&& \hphantom{:}= aaaa?a?a?b?bbbbb\\
w_{2l-1} & \DefAs a^{2^{l-1}} (??)^{2^{l-1}} b^{2^{l-1}} &
w_{2l} & \DefAs (aa)^{2^{l-1}} (?a)^{2^{l-1}} (?b)^{2^{l-1}} (bb)^{2^{l-1}}
\end{align*}
It is easily checked that \( w_{1}^{\infty} \triangleleft \hdots \triangleleft w_{l}^{\infty} \) is \( 4^{l} \)-periodic, and that \( (4^{l})_{l \in \NN} \) is actually a period structure. Figure~\ref{fig:UnbdHolesOneBdry} shows the associated odometer, with labels on the cylinder sets indicating which set of the separating cover they belong to (equivalently: what is their value under the semicocycle). Note how the right half of all sets that were undetermined on level \( 2l-2 \), is determined on level \(2l\), resulting thus in a single boundary point. Moreover, the number of undetermined cylinder sets is the same on level \( 2l-1 \) and level \( 2l \), but doubles from level \( 2l \) to level \( 2l+1 \).

\begin{figure}
\centering
\scriptsize
\pgfmathsetmacro{\TikzHeight}{-0.5} 
\pgfmathsetmacro{\TikzDist}{3} 
\pgfmathsetmacro{\TikzLong}{(0.95*\linewidth - 3*\TikzDist) / 4} 
\pgfmathsetmacro{\TikzMedium}{(\TikzLong - 3*\TikzDist) / 4} 
\pgfmathsetmacro{\TikzShort}{(\TikzMedium - 3*\TikzDist) / 4} 
\begin{tikzpicture}
[
CylinFill/.pic = {\draw (0,0) --node[midway, above]{a} ++( #1 pt  , 0 ) ++( \TikzDist pt, 0) --node[midway, above]{a} ++( #1 pt  , 0 ) ++( \TikzDist pt, 0) --node[midway, above]{b} ++( #1 pt  , 0 ) ++( \TikzDist pt, 0) --node[midway, above]{b} ++( #1 pt  , 0 );},
CylinUnfill/.pic = {\draw ( 0 , 0 ) --node[midway, above]{a} ++( #1 pt  , 0 ) ++( \TikzDist pt, 0) -- ++( #1 pt  , 0 ) ++( \TikzDist pt, 0) -- ++( #1 pt  , 0 ) ++( \TikzDist pt, 0) --node[midway, above]{b} ++( #1 pt  , 0 );}]
\path (0,0) pic{CylinUnfill=\TikzLong};
\path (\TikzLong + \TikzDist pt, \TikzHeight) pic{CylinUnfill=\TikzMedium} ++(\TikzLong + \TikzDist pt, 0) pic{CylinFill=\TikzMedium};
\path (\TikzLong + \TikzMedium + 2*\TikzDist pt, 2*\TikzHeight) pic{CylinUnfill=\TikzShort} +(2*\TikzShort + \TikzDist pt, 0)node[below](A){}  +(3*\TikzShort + 2*\TikzDist pt, 0)node[below](B){} ++ ( \TikzMedium + \TikzDist pt, 0) pic{CylinUnfill=\TikzShort} +(\TikzShort + \TikzDist pt, 0)node[below](C){} +(2*\TikzShort + 2* \TikzDist pt, 0)node[below](D){} ;
\path (2*\TikzMedium + 2*\TikzDist pt , 3.7*\TikzHeight) pic{CylinUnfill=\TikzShort} +(0.5*\TikzMedium pt, 0)node[above=-0.5*\TikzHeight](X){} ++( 2*\TikzMedium + 2*\TikzDist pt , 0 ) pic{CylinUnfill=\TikzShort} +(0.5*\TikzMedium pt, 0)node[above=-0.5*\TikzHeight](Y){} ++( 3*\TikzMedium + 3*\TikzDist pt , 0 ) pic{CylinFill=\TikzShort} +(0.5*\TikzMedium pt, 0)node[above=-0.5*\TikzHeight](Z){} ++( 2*\TikzMedium + 2*\TikzDist pt , 0 ) pic{CylinFill=\TikzShort} +(0.5*\TikzMedium pt, 0)node[above=-0.5*\TikzHeight](W){};
\draw (A)--(X) (B)--(Y) (C)--(Z) (D)--(W);
\end{tikzpicture}
\normalsize
\caption{\label{fig:UnbdHolesOneBdry}The separating cover \( \Odo = C_{a} \cup C_{b} \) that generates Example~\ref{ex:UnbdHolesOneBdry}. The bottom line shows (zoomed in) the level-4 cylinder sets inside \( [1 , 5 , 21] \), \( [1 , 5 , 37] \), \( [1 , 9 , 25] \) and \( [1 , 9 , 41] \) respectively.}
\end{figure}

Making the above precise, we define \( U_{l} \DefAs \{ u \in \{ 1, 2 \}^{2l} : u(1) = \hdots = u(l) = 1 \} \) and claim
\[ \Aper( 4^{2l} , x ) = \bigcup_{u \in U_{l}} ( \sum_{i=1}^{2l} u(i) \cdot 4^{i-1} + 4^{2l}\ZZ) \subseteq 4^{0} + 4^{1} + \hdots + 4^{l-1} + 4^{l} \ZZ , \]
which we will prove in a moment. On the one hand, \( \Card{ U_{l} } = 2^{l} \) then implies that \( \Card{ \Aper( 4^{2l}, x ) \cap [0, 4^{2l} - 1] } = 2^{l} \) holds for each \( l \in \NN \), so the number of holes per period is unbounded. On the other hand, the same argument as in the proof of Proposition~\ref{prop:BdHolesFiniteD} shows that \( B_{X} = \{ ( 1 , 5 , 21 , \hdots  , \sum_{i=0}^{l} 4^{i} , \hdots ) \} \) is a singleton. We now prove our claim by induction: for \( l = 1 \) we have by definition \( w_{1}^{\infty} \triangleleft w_{2}^{\infty} = (aaaba?aba?bbabbb)^{\infty} \) and hence \( \Aper( 4^{2} , x ) = ( 1 \cdot 4^{0} + 1 \cdot 4^{1}  + 4^{2} \ZZ ) \cup ( 1 \cdot 4^{0} + 2 \cdot 4^{1}  + 4^{2} \ZZ ) \). Assume now that the claim holds for some \( l \in \NN \). We proceed in two steps: first we insert \( w_{2l+1}^{\infty} = ( a^{2^{l}} (??)^{2^{l}} b^{2^{l}} )^{\infty} \) into the positions
\begin{align*}
&\hspace{-3.7em} \Aper( 4^{2l} , x ) = \bigcup_{u \in U_{l}} \Big( \sum_{i=1}^{2l} u(i) \! \cdot \! 4^{i-1} + 4^{2l}\ZZ \Big)\\
= \bigcup_{u \in U_{l}} \Big( &( \sum_{i=1}^{2l} u(i) \! \cdot \! 4^{i-1} + 0 \! \cdot \! 4^{2l} + 4^{2l+1} \ZZ ) \cup ( \sum_{i=1}^{2l} u(i) \! \cdot \! 4^{i-1} + 1 \! \cdot \! 4^{2l} + 4^{2l+1} \ZZ ) \\
{} \cup {} & ( \sum_{i=1}^{2l} u(i) \! \cdot \! 4^{i-1} + 2 \! \cdot \! 4^{2l} + 4^{2l+1} \ZZ ) \cup ( \sum_{i=1}^{2l} u(i) \! \cdot \! 4^{i-1} + 3 \! \cdot \! 4^{2l} + 4^{2l+1} \ZZ ) \Big) .
\end{align*}
Because of \( \Card{ U_{l} } = 2^{l} \), the positions from the first expression are precisely those that get filled with \( a \), the positions of the two middle expressions remain completely unfilled, and the positions from the last expression get filled with \( b \). We obtain
\begin{align*}
\Aper( 4^{2l+1} , x ) = \bigcup_{u \in U_{l}} \Big( & ( \sum_{i=1}^{2l} u(i) \cdot 4^{i-1} + 1 \cdot 4^{2l} + 4^{2l+1} \ZZ ) \\
{} \cup {} & ( \sum_{i=1}^{2l} u(i) \cdot 4^{i-1} + 2 \cdot 4^{2l} + 4^{2l+1} \ZZ ) \Big) .
\end{align*}
In the second step, we now split these positions into residue classes modulo \( 4^{2l+2} \) and insert \( w_{2l+2}^{\infty} = ( (aa)^{2^{l}} (?a)^{2^{l}} (?b)^{2^{l}} (bb)^{2^{l}} )^{\infty} \). Again, the positions from the first and from the last expression get completely filled, which ensures that \( \Aper( 4^{2l+2}, x ) \) can be encoded by words \( u \in \{ 1, 2 \}^{2l+2} \). Moreover, the alternation of \( ? \)'s and non-\( ? \)-letters in the remaining positions implies \( u(l+1) = 1 \), so that \( \Aper( 4^{2l+2}, x ) \) is indeed described by \( U_{l+1} \).
\end{exmpl}

Next we provide an example which shows that property~\ref{item:FinD} does not imply a linear bound on the word complexity. More precisely, our example has a single hole per period (hence property~\ref{item:FinD}, see Proposition~\ref{prop:BdHolesFiniteD}), but nevertheless superpolynomial complexity along a subsequence (that is, \( \limsup_{L\to\infty} \frac{\cplx{x}(L)}{\left\lvert q(L)\right\rvert}  = \infty\) for every polynomial $q$). In Remark~\ref{rem:NonSuperlinCompl} we discuss that such subshifts also have non-superlinear complexity (that is, \( \liminf_{L \to \infty} \frac{ \cplx{x}( L ) }{ L } < \infty \)), which shows that different complexity behaviours can coexist in the same word. A different example of this phenomenon can be found in \cite[Section~4.1]{DonoDuraMaaPeti-AutoGrLowCompl}.

\begin{exmpl}
\label{exmpl:deBrujin}
We construct a Toeplitz sequence in \( \{ a, b \}^{\ZZ} \) via hole filling: for \( l \geq 3 \) let \( w_{B ,\, l} \) denote a de Bruijn word of order \( l! \), that is, a word of length \( 2^{l!} \) that contains every word of length \( l! \) when read cyclic. We choose the starting point of \( w_{B , \, l} \) such that it ends with \( b^{l!} \). Then we replace one letter of \( b^{l!} \) (neither the first nor the last) by \( ? \), and call the resulting word \( w_{l} \), for example
\[ w_{3} = aaaaaabaaaabbaaababaaabbbaabaababbaabbabaabbbbabababbbabbabb?bbb . \]
We write \( x \DefAs \lim_{ l \to \infty } w_{3}^{\infty} \triangleleft \hdots \triangleleft w_{l}^{\infty} \) for the Toeplitz sequence generated by this process. Moreover, we set \( p_{l} \DefAs \prod_{n=3}^{l} \lvert w_{n} \rvert = \prod_{n=3}^{l} 2^{n!} \) and note that \( \Card{ \Aper( p_{l} , x ) \cap [0 , p_{l}-1] } = 1 \) holds. We denote the (\( p_{l} \)-periodic) word between two consecutive \( \Aper(p_{l} , x ) \)-positions by \( W_{l} \). Thus \( x \) can be decomposed as \( \hdots W_{l} \star W_{l} \star W_{l} \star \hdots \), with each \( \star \) denoting a letter from \( \Alphab \). Every word of length \( (l+1)! \cdot p_{l} \) is therefore of the form
\[ v_{l}( j , u ) \DefAs ( W_{l} u_{1} W_{l} \hdots u_{(l+1)!} W_{l} )[ j , j + (l+1)! \cdot p_{l} - 1 ] , \]
with \( u \in \{ a , b \}^{(l+1)!} \) and \( j \in [1, p_{l} ] \). Conversely, all \( v_{l}( j , u ) \) appear in \( x \) by the de Bruijn property of \( w_{l+1} \). To show that \( (p_{l} ) \) is a period structure of \( x \), we consider \( v_{l}( 1 ,  a \hdots a ) \), which is contained in \( W_{l+1} \) and appears therefore \( p_{l+1} \)-periodically. By counting the number of \( a \)'s in any word of length \( (l+1)! \cdot p_{l} \), it follows from the decomposition \( x = \hdots W_{l} \star W_{l} \hdots \) that \( v_{l}( 1 ,  a \hdots a ) \) appears only where \( (l+1)! \)-many consecutive \( p_{l} \)-holes have value \( a \). By the de Bruijn property, \( a^{(l+1)!} \) appears only once in \( w_{l+1} \), so \( p_{l+1} \) is indeed the shortest period for \( v_{l}( 1 ,  a \hdots a ) \), and hence also for \( W_{l+1} \). Moreover, \( x \) has by construction only a single \( p_{l} \)-hole per period. To show superpolynomial complexity along a subsequence, we claim that
\[ \cplx{x}( (l+1)! \cdot p_{l} ) \geq 2^{(l+1)!} p_{l} \]
holds. We prove the claim by showing that the \( 2^{(l+1)!} p_{l} \) possible words \( v_{l}( j , u ) \) are pairwise different. Assume hence \( v_{1} \DefAs v_{l}( j_{1} , u_{1} ) = v_{l}( j_{2} , u_{2} ) \AsDef v_{2} \), and let \( m \leq l \) be maximal such that \( j_{1} \equiv j_{2} \mod p_{m} \) holds. Thus, the \( p_{m} \)-holes are in the same positions in \( v_{1} \) and \( v_{2} \), and \( \lvert v_{i} \rvert \geq (m+1)! \cdot p_{m} \) implies that both \( v_{i} \)'s contain at least \( (m+1)! \)-many of them. Because of \( v_{1} = v_{2} \), the \( p_{m} \)-holes in \( v_{1} \) and \( v_{2} \) are filled with the same word of length \( (m+1)! \). By the de Bruijn property of \( w_{m+1} \), every word of length \( (m+1)! \) appears only once along the \( p_{m} \)-holes within \( W_{m+1} \), so \( v_{1} \) and \( v_{2} \) must occur at the same place within \( W_{m+1} \). The maximality of \( m \) hence implies \( m = l \), and thus \( j_{1} = j_{2} \) and \( u_{1} = u_{2} \) as claimed. Having thus proved the lower bound on the complexity, we now check that \( x \) has superpolynomial complexity along the subsequence \( ( (l+1)! \cdot p_{l} )_{ l \in \NN } \): firstly, we note that the estimate
\[ p_{l} = \prod_{n=3}^{l} 2^{n!} = 2^{ l! + (l-1)! + \hdots +3! } \leq 2^{l! + (l-1)! \cdot (l-3)} < 2^{l! \cdot 2} \]
holds. Let now \( q \) be any polynomial, and let \( m \in \NN \) be such that \( \lvert q(n) \rvert \leq n^{m} \) holds for all sufficiently large \( n \). For sufficiently large \( l \) we then obtain
\[ \frac{ \cplx{x}( (l+1)! \cdot p_{l} ) }{ \lvert q( (l+1)! \cdot p_{l} ) \rvert } \geq \frac{ 2^{(l+1)!} p_{l} }{ ((l+1)!)^{m} p_{l}^{m} } > \frac{ 2^{l! \cdot (l+1)} \cdot 2^{l!} \cdot 2^{(l-1)! + \hdots 3! } }{ (l!)^{m} \cdot (l+1)^{m} \cdot 2^{ l! \cdot 2m }} \xrightarrow{l \to \infty} \infty . \]
\end{exmpl}

\begin{rem}
\label{rem:NonSuperlinCompl}
While Example~\ref{exmpl:deBrujin} shows superpolynomial complexity along one subsequence of positions, every Toeplitz sequence with a single hole per period also has non-superlinear complexity. To see that this is the case, let \( ( p_{l} ) \) denote a period structure and let \( W_{l} \) be the word of length \( p_{l} - 1 \) between two consecutive positions of \( \Aper( p_{l} , x ) \). Then \( x \) can be written as \( x = \hdots W_{l} \star W_{l} \star W_{l} \star \hdots \), with each \( \star \) denoting a letter from \( \Alphab \). Hence every word of length \( p_{l} \) in \( x \) is contained in some \( W_{l} \star W_{l} \). Since there are \( \lvert W_{l} \rvert + 1 = p_{l} \) possibilities for the starting point, and \( \Card{\Alphab} \)  possibilities for the value of \( \star \), we obtain \( \cplx{x}( p_{l} ) \leq p_{l} \cdot \Card{ \Alphab } \) and thus
\[ \liminf_{L\to\infty} \frac{\cplx{x}(L)}{L} \leq \lim_{l\to\infty} \frac{\cplx{x}(p_{l})}{p_{l}} \leq \frac{ p_{l} \cdot \Card{ \Alphab } }{ p_{l} } = \Card{ \Alphab }  < \infty .  \]
Similarly, assume that the number of holes per period is bounded, that is, there exists \( h \in \NN \) with \( \Card{ \Aper( p_{l} , x ) \cap [ 0, p_{l}-1 ] } \leq h \) for all \( l \in \NN \). Then \( \liminf_{L \to \infty} \frac{\cplx{x}(L)}{L} \leq \Card{ \Alphab }^{h} \) follows. We do not know to which extend non-superlinear complexity holds in general for Toeplitz subshifts with property~\ref{item:FinD} (which is a strictly weaker condition than a bounded number of holes per period, see Proposition~\ref{prop:BdHolesFiniteD} and Example~\ref{ex:UnbdHolesOneBdry}).
\end{rem}

\section{When a Toeplitz subshift has a factor with property~\ref{item:FinD}}
\label{sec:FactFiniteProxi}

As we have seen in Corollary~\ref{cor:PresUnderFact}, property~\ref{item:FinD} is preserved when going from a subshift to a factor subshift with the same maximal equicontinuous factor. However, when \ref{item:FinD} fails for a subshift, there may or may not be a factor subshift with~\ref{item:FinD} (note that this factor is then necessarily proper, since any conjugacy would preserve \ref{item:FinD}). In this section we discuss criteria for the existence or non-existence of such factors in the Toeplitz case. Recall from Section~\ref{subsec:PrelimToepl} that a factor subshift \( \Psi( X_{x} ) \) of a Toeplitz subshift \( X_{x} \) is the Toeplitz subshift \( X_{\Psi( x )} \), and that the factor map \( \Psi \) is given by a sliding block code. Recall also that for every \( y \in X_{x} \), the shift of \( x \) relative to \( y \) at the periodic positions is uniquely determined, see~\eqref{eqn:UniqueShift}, and that this defines the factor map \( \FactOdo_{x} \colon X_{x} \to \Omega \) to the maximal equicontinuous factor. 

In this section, we will use the language of semicocycles (see Section~\ref{subsec:SepCover}) rather than that of separating covers. This allows us to know not only whether \( \omega \in \Odo \) is a boundary point of the separating cover, but through \( F_{x}(\omega) \) we also keep track of which of the cover's sets contain \( \omega \). This does not matter for binary alphabets (there are only two covering sets, and boundary points belong to both of them), but it will be important for larger alphabets and for controlling boundary points of a factor subshift. For instance, if the letters \( b \) and \( c \) are identified in a factor subshift, then a boundary point which is only contained in \( C_{b} \) and \( C_{c} \) will vanish, while a boundary point that also belongs to \( C_{a} \) might be preserved. We denote the set of discontinuities of \( \tau_{x} \) respectively \( \tau_{\Psi( x )} \) by \( B_{X} \) respectively \( B_{\Psi( X )} \).

\subsection{Sufficient condition}

Our first aim is to construct, under certain conditions, a factor subshift with a single semicocycle discontinuity. The main idea is to find a discontinuity point \( \omega \) and finite words \( u_{n} \), such that \( \omega \) is the only discontinuity point on the boundary of the projection \( \FactOdo_{x}( [ u_{1}] \cup [ u_{2} ] \cup \hdots \} ) \) of the cylinder sets of the \( u_{n} \)'s. The sliding block code that maps all \( u_{n} \)'s to \( a \) and everything else to \( b \), has then constant value around any discontinuity point except \( \omega \). Therefore \( \omega \) is the only discontinuity point that is preserved under the factor map, and in fact the unique semicocycle discontinuity of the factor subshift. Before we formulate and prove this result rigorously, we show an auxiliary lemma that will allows us to identify a suitable finite word \( u \) and which might be of independent interest. We remark that in \cite[Lemma~5]{CassKar_ToeplWords} a related result can be found for the special case of so-called \((p,q)\)-Toeplitz words (see \cite{CassKar_ToeplWords} for the definition).

\begin{lem}
\label{lem:UniquePosModP}
Let \( x \in \Alphab^{\ZZ} \) be a non-periodic Toeplitz word with period structure \( (p_{l}) \). For every \( l_{1} \in \NN \) there exists \( l_{2} \in \NN \) such that each word with length at least \( p_{l_{2}} \) occurs in \( x \) only in a unique residue class modulo \( p_{l_{1}} \). In other words: \( x[ j_{1} , j_{1} + p_{l_{2}} - 1 ] = x[  j_{2} , j_{2} + p_{l_{2}} - 1 ] \) implies \( j_{1} \equiv j_{2} \mod p_{l_{1}} \).
\end{lem}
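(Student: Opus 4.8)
The plan is to convert this combinatorial statement about occurrences into a statement about a locally constant function, by exploiting the continuity of the factor map $\FactOdo_{x} \colon X_{x} \to \Odo$ to the maximal equicontinuous factor. As a preliminary observation I would first record that, since $x$ is non-periodic, the periods satisfy $p_{l} \to \infty$: by $p_{l} \mid p_{l+1}$ a bounded period structure would be eventually constant equal to some $p$, and then property~\ref{item:PerStruc:Z} would give $\Per(p, x) = \ZZ$, i.e.\ $p$-periodicity of $x$, contradicting the hypothesis. This guarantees that an index $l_{2}$ with $p_{l_{2}}$ as large as we wish genuinely exists.

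The key step is to show that the $p_{l_{1}}$-phase is already determined by a bounded central window. I would consider the map $\Phi \colon X_{x} \to \ZZ / p_{l_{1}}\ZZ$, $\Phi(y) \DefAs \FactOdo_{x}(y)(l_{1})$, reading off the $l_{1}$-th coordinate of the image in the odometer. It is continuous, being the composition of the continuous factor map $\FactOdo_{x}$ with the projection $\Odo \to \ZZ / p_{l_{1}}\ZZ$, $\omega \mapsto \omega(l_{1})$. Since the target is finite and discrete, each fibre $\Phi^{-1}(r)$ is clopen in the compact space $X_{x}$, hence a finite union of cylinder sets; letting $M \in \NN$ be large enough that all these finitely many cylinders are determined by the coordinates in $[-M, M]$, I conclude that $y[-M, M] = y'[-M, M]$ implies $\Phi(y) = \Phi(y')$ for all $y, y' \in X_{x}$.

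It then remains to translate this back to occurrences of words in $x$. By~\eqref{eqn:UniqueShift} and the definition of $\FactOdo_{x}$ one has, for $y = \Shift^{j}(x)$, the identity $\Per(p_{l_{1}}, y, a) = \Per(p_{l_{1}}, x, a) - j$; comparing with $\Per(p_{l_{1}}, \Shift^{k}(x), a)$ and using that this set is $p_{l_{1}}$-periodic gives $\Phi(\Shift^{j}(x)) = j \bmod p_{l_{1}}$. Now I would choose $l_{2}$ with $p_{l_{2}} \geq 2M + 1$. If $x[j_{1}, j_{1} + p_{l_{2}} - 1] = x[j_{2}, j_{2} + p_{l_{2}} - 1]$, then setting $y \DefAs \Shift^{j_{1} + M}(x)$ and $y' \DefAs \Shift^{j_{2} + M}(x)$ makes $y[-M, M] = x[j_{1}, j_{1} + 2M]$ and $y'[-M, M] = x[j_{2}, j_{2} + 2M]$ coincide (both equal the length-$(2M+1)$ prefix of the repeated word, which fits since $p_{l_{2}} \geq 2M + 1$). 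Hence $\Phi(y) = \Phi(y')$, that is $(j_{1} + M) \equiv (j_{2} + M) \pmod{p_{l_{1}}}$, giving $j_{1} \equiv j_{2} \pmod{p_{l_{1}}}$ as required.

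The main obstacle is the second step: extracting a single window radius $M$ that works uniformly over all of $X_{x}$. Pointwise local constancy of $\Phi$ is immediate from continuity into a discrete space, but the uniform bound — and hence the single threshold $p_{l_{2}}$ — is exactly what compactness buys us through the covering of the finitely many clopen fibres by finitely many cylinders. An alternative, purely combinatorial route would bypass the factor map and argue directly from essentiality of $p_{l_{1}}$: a long word occurring at $j_{1} \not\equiv j_{2} \pmod{p_{l_{1}}}$ would force the value-and-hole pattern of $\Per(p_{l_{1}}, x)$ to be invariant under the nonzero shift $\delta \DefAs (j_{2} - j_{1}) \bmod p_{l_{1}}$, hence to have period $\gcd(\delta, p_{l_{1}}) < p_{l_{1}}$, yielding $\Per(\gcd(\delta, p_{l_{1}}), x) = \Per(p_{l_{1}}, x)$ and contradicting condition~\ref{defi:PerStruc:EssPeriod}. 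Making this route rigorous requires controlling the letters at the hole positions inside the window (a position periodic in one occurrence may be a hole in the other), which is precisely the delicate point that the continuity argument sidesteps.
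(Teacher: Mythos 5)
Your proof is correct, but it takes a genuinely different route from the paper. You derive the uniform window radius \( M \) topologically: the map \( y \mapsto \FactOdo_{x}( y )( l_{1} ) \) is continuous into a finite discrete set, so its fibres are clopen, and compactness of \( X_{x} \) turns pointwise local constancy into a single radius \( M \); any occurrence window of length \( p_{l_{2}} \geq 2M+1 \) then pins down the \( p_{l_{1}} \)-phase via \( \FactOdo_{x}( \Shift^{j}( x ) )( l_{1} ) = j \bmod p_{l_{1}} \). The paper instead argues combinatorially and never invokes continuity of \( \FactOdo_{x} \): it lists the holes \( n_{1} , \hdots , n_{K} \in \Aper( p_{l_{1}} , x ) \cap [ 0 , p_{l_{1}} - 1 ] \), picks for each a witness \( m_{k} \equiv n_{k} \bmod p_{l_{1}} \) with \( x( m_{k} ) \neq x( n_{k} ) \), and chooses \( l_{2} \) explicitly so that all \( n_{k} , m_{k} \) become \( p_{l_{2}} \)-periodic; the word equality then forces \( \Aper( p_{l_{1}} , x ) + j_{2} - j_{1} = \Aper( p_{l_{1}} , x ) \) and equality of the \( p_{l_{1}} \)-periodic parts, after which \eqref{eqn:UniqueShift} gives \( j_{1} \equiv j_{2} \bmod p_{l_{1}} \). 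The witnesses \( m_{k} \) are exactly what resolves the ``delicate point'' you flag at the end of your alternative sketch (controlling the letters at hole positions), so your second route, made rigorous, essentially reproduces the paper's argument. Your main argument is shorter and more conceptual but yields a less explicit \( l_{2} \); the paper's is self-contained at the combinatorial level and gives a concrete threshold. Both are valid, and there is no circularity in your use of \( \FactOdo_{x} \), whose continuity is established in the preliminaries independently of this lemma.
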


\begin{proof}
We set \( \{ n_{1} , \hdots , n_{K} \} \DefAs \Aper( p_{l_{1}} , x ) \cap [ 0 , p_{l_{1}}-1 ] \). For each \( n_{k} \), we fix \( m_{k} \in n_{k} + p_{l_{1}} \ZZ \) with \( x( m_{k} ) \neq x( n_{k} ) \). We choose \( l_{2} \in \NN \) large enough such that \( \{ n_{1} , \hdots , n_{K} \} \cup \{ m_{1} , \hdots , m_{K} \} \subseteq \Per( p_{l_{2}} , x ) \) holds. Let now \( j_{1}, j_{2} \in \ZZ \) be such that \( x[ j_{1} , j_{1} + p_{l_{2}} - 1 ] = x[ j_{2} , j_{2} + p_{l_{2}} - 1 ] \) holds. We claim that this implies \( \Aper(p_{l_{1}} , x ) + j_{2} - j_{1} \subseteq \Aper( p_{l_{1}} , x ) \): for fixed \( k \in \{ 1 , \hdots , K \} \), denote by \( \widetilde{n} \) respectively \( \widetilde{m} \) the unique element from \( [ j_{1} , j_{1} + p_{l_{2}} - 1 ] \) that lies in \( n_{k} + p_{l_{2}} \ZZ \) respectively \( m_{k} + p_{l_{2}} \ZZ \). 
We obtain
\begin{align*}
x( \widetilde{n} + j_{2} - j_{1} ) &= x( \widetilde{n} ) &&\hspace{-12em} \text{since }   x[ j_{2} , j_{2} + p_{l_{2}} - 1 ] = x[ j_{1} , j_{1} + p_{l_{2}} - 1 ], \\
{}&= x( n_{k} ) &&\hspace{-12em} \text{since } \widetilde{n} \in n_{k} + p_{l_{2}} \ZZ \subseteq \Per( p_{l_{2}} , x ), \\
&\neq x( m_{k} ) = x( \widetilde{ m } ) = x ( \widetilde{m}  + j_{2} - j_{1}) .
\end{align*}
Since \(  \widetilde{n} + j_{2} - j_{1} \) and \(  \widetilde{m} + j_{2} - j_{1} \) both are in \(  n_{k} + j_{2} - j_{1} + p_{l_{1}} \ZZ \), it follows as claimed for every \( n_{k} + p_{l_{1}} \ZZ \subseteq \Aper( p_{l_{1}} , x ) \) that \( x \) is not constant on \(  n_{k} + j_{2} - j_{1} + p_{l_{1}} \ZZ \).

Next we note that \( \Aper(p_{l_{1}} , x ) + j_{2} - j_{1} \subseteq \Aper( p_{l_{1}} , x ) \) implies \( \Aper( p_{l_{1}} , x ) + j_{2} - j_{1} = \Aper( p_{l_{1}} , x ) \),  since \( \Aper(p_{l_{1}} , x ) \) is a \( p_{l_{1}} \)-periodic set. After taking complements, we obtain
\begin{equation}
\label{eqn:PerSetsShifted}
\Per( p_{l_{1}} , \Shift^{j_{1}}( x ) ) = \Per( p_{l_{1}} , x ) - j_{1} = \Per( p_{l_{1}} , x ) - j_{2} = \Per( p_{l_{1}} , \Shift^{j_{2}}( x ) ) .
\end{equation}
Moreover, \( x[ j_{1} , j_{1} + p_{l_{2}} - 1 ] = x[ j_{2} , j_{2} + p_{l_{2}} - 1 ] \) implies \( \Shift^{j_{1}}( x )[ 0 , p_{l_{2}} - 1 ] = \Shift^{j_{2}}( x )[ 0 , p_{l_{2}} - 1 ] \). As \( \Shift^{j_{1}}( x ) \) and \( \Shift^{j_{2}}( x ) \) agree on an interval that is longer than \( p_{l_{1}} \), and since the \( p_{l_{1}} \)-periodic positions are equal by Equation~\eqref{eqn:PerSetsShifted}, we get
\[ \Shift^{j_{1}}( x )( \Per( p_{l_{1}} , \Shift^{j_{1}}( x ) ) ) = \Shift^{j_{2}}( x )( \Per( p_{l_{1}} , \Shift^{j_{2}}( x ) ) ) . \]
It only remains to notice that the \( p_{l_{1}} \)-periodic part of \( x \) is uniquely determined modulo \( p_{l_{1}} \), see Equation~\eqref{eqn:UniqueShift}.
\end{proof}

\begin{thm}
\label{thm:IsoDiscontFact}
Let \( x \in \Alphab^{\ZZ} \) be a non-periodic Toeplitz word and let \( \Omega \) denote the associated odometer. Assume \( \omega \in B_{X} \) and \( a \in \Alphab \) are such that \( ( \omega , a ) \in F_{x} \) is an isolated point in \( \bigcup_{ \widetilde{\omega} \in B_{X} } F_{x}( \widetilde{\omega} ) \). Then there exists a factor subshift \( X_{\Psi(x)} \) of \( X_{x} \) with the same associated odometer \( \Omega \) and with \( B_{\Psi( X )} = \{ \omega \} \).
\end{thm}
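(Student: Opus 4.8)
The plan is to realise a binary factor whose semicocycle, read over $\Omega$, is the indicator function of a closed set $V\subseteq\Omega$ with topological boundary exactly $\{\omega\}$; all the difficulty then lies in showing that a single boundary point already forces the factor to keep the full odometer. Since $\Alphab$ is discrete, $(\omega,a)$ being isolated in $\bigcup_{\widetilde\omega\in B_X}F_x(\widetilde\omega)$ means that for some level $l_0$ the slice $[\omega]_{l_0}\times\{a\}$ meets that set only in $(\omega,a)$; by Proposition~\ref{prop:AperDx}~\ref{item:A2} and \eqref{eq:DefSepCover} this is
\[ B_X\cap[\omega]_{l_0}\cap C_a=\{\omega\}. \]
I would then set $V\DefAs C_a\cap[\omega]_{l_0}$. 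As $C_a$ is regular (i.e.\ $C_a=\Close{\Int{C_a}}$) and $[\omega]_{l_0}$ is clopen, $V$ is regular closed and $\partial V=\partial C_a\cap[\omega]_{l_0}$. Now $\partial C_a\subseteq B_X$ (a boundary point of $C_a$ is approached from another $C_b$, hence lies in $B_X$) and $\omega\in\partial C_a$ (it lies in $C_a\cap C_b$ for some $b\neq a$, so it cannot be interior to $C_a$ because the interiors are disjoint); together with the displayed equality this gives $\partial V=\{\omega\}$.

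Next I would build the code. Applying Lemma~\ref{lem:UniquePosModP} with $l_1\DefAs l_0$ yields $l_2$ such that every word of length $\geq p_{l_2}$ occurring in $x$ sits in a single residue class modulo $p_{l_0}$; I fix a window radius $J$ with $2J+1\geq p_{l_2}$. Over the binary alphabet $\{0,1\}$ I define $\psi(v)\DefAs 1$ exactly when $v$ occurs in $x$, has central letter $a$, and its (well-defined) residue class modulo $p_{l_0}$ equals $\omega(l_0)$; otherwise $\psi(v)\DefAs 0$. Writing $\Psi$ for the induced factor map and $X_{\Psi(x)}\DefAs\Psi(X_x)$, the window at position $j$ occurs at residue $j\bmod p_{l_0}$, so $\Psi(x)(j)=1$ if and only if $x(j)=a$ and $j\equiv\omega(l_0)\bmod p_{l_0}$, that is, if and only if $\Embed(j)\in V$. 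Hence the semicocycle of $\Psi(x)$, read over $\Omega$, is $\CharFkt{V}$. Since $V=\Close{\Int{V}}$ with $\partial V=\{\omega\}$ and $\Embed(\ZZ)$ is dense in the perfect space $\Omega$, the restriction $\CharFkt{V}|_{\Embed(\ZZ)}$ extends continuously on $\Omega\setminus\{\omega\}$ and genuinely jumps at $\omega$, which is approached through $\Int{V}$ and through its complement alike. Thus the discontinuity set of $\tau_{\Psi(x)}$ over $\Omega$ is $\{\omega\}$; in particular $\Psi(x)$ is non-periodic, and as $\omega\notin\Embed(\ZZ)$ it is a genuine Toeplitz word.

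Finally I would identify the odometer. By Proposition~\ref{prop:OdoFactIso} the odometer $\Omega'$ of $\Psi(x)$ is a factor of $\Omega$, say via a continuous surjective homomorphism $\gamma'\colon\Omega\to\Omega'$; writing $\tau'$ for the semicocycle of $\Psi(x)$ over its own maximal equicontinuous factor $\Omega'$, one has $\CharFkt{V}=\tau'\circ\gamma'$ on $\Embed(\ZZ)$. Passing to closures (using that $\gamma'$ is open and that $V$ equals the closure of its trace on $\Embed(\ZZ)$) gives $V=(\gamma')^{-1}(V')$ for the corresponding region $V'\subseteq\Omega'$, so $V$ is invariant under $\ker\gamma'$. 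Any translation preserving $V$ also preserves $\partial V=\{\omega\}$, so $\ker\gamma'$ fixes a one-point set and is therefore trivial; hence $\gamma'$ is an isomorphism and $\Omega'=\Omega$. With the odometers identified, $B_{\Psi(X)}$ is exactly the discontinuity set of $\tau_{\Psi(x)}$ over $\Omega$, namely $\partial V=\{\omega\}$, which is the assertion.

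I expect the last step — deducing $\Omega'=\Omega$ from the single boundary point — to be the main obstacle, as it hinges on the (standard but delicate) fact that the semicocycle of the factor pulled back to $\Omega$ is invariant under $\ker\gamma'$, whence the stabiliser of $V$ must fix $\partial V$; the one-point boundary is precisely what makes this collapse the kernel. The only other genuine technical point is the well-definedness of $\psi$ on all of $\Alphab^{[-J,J]}$, which is exactly what Lemma~\ref{lem:UniquePosModP} secures by pinning down the residue class modulo $p_{l_0}$ from a sufficiently long window.
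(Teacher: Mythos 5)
Your construction of the factor map is essentially the paper's: the same choice of \( l_{0} \) from the isolatedness hypothesis, the same use of Lemma~\ref{lem:UniquePosModP} to pin down the residue class of a long window, and the same code (output ``\(1\)'' exactly on occurrences with central letter \( a \) in the residue class \( \omega(l_{0}) \bmod p_{l_{0}} \)), so that \( \Psi(x)(j) = 1 \Leftrightarrow \Embed(j) \in V \DefAs C_{a} \cap [\omega]_{l_{0}} \). Where you genuinely diverge is in proving that the factor keeps the full odometer and has boundary \( \{\omega\} \). The paper does this combinatorially: it establishes the two inclusions \( \omega(l) + p_{l}\ZZ \subseteq \Aper(p_{l}, \Psi(x)) \) and \( \Aper(p_{\,\widehat{l}\,}, \Psi(x)) \subseteq \omega(l) + p_{l}\ZZ \) (the latter via a finite subcover of \( [\omega]_{l_{1}} \setminus [\omega]_{l} \) by cylinder sets on which \( \tau_{x} \) is constantly \( a \) or never \( a \)), and then extracts \( p_{l} \mid q_{m} \) directly from the period structure of \( \Psi(x) \). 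You instead observe that \( V \) is regular closed with \( \partial V = \{\omega\} \), identify \( V \) with \( (\gamma')^{-1}(V') \), and collapse \( \ker\gamma' \) because any group element stabilising \( V \) must fix the single boundary point. Both arguments are sound; yours is shorter and more conceptual (the one‑point boundary does all the work at once), while the paper's stays entirely inside the period‑structure formalism and yields the quantitative information \( \Aper(p_{\,\widehat{l}\,}, \Psi(x)) \subseteq \omega(l) + p_{l}\ZZ \) as a by‑product.

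One step you should make explicit, because a naive reading fails: ``passing to closures'' in \( V = (\gamma')^{-1}(V') \). If you simply pull back the closure of \( \{\Embed'(j) : \Psi(x)(j)=1\} \) through the open map \( \gamma' \), you obtain \( (\gamma')^{-1}(V') = \Close{(V\cap\Embed(\ZZ)) + \ker\gamma'} = V + \ker\gamma' \), which is not yet the invariance you need — it is exactly what you are trying to prove. The correct route is: (i) \( V \cap \Embed(\ZZ) = (\gamma')^{-1}(V') \cap \Embed(\ZZ) \), both being \( \{\Embed(j) : \Psi(x)(j)=1\} \); (ii) \( V \) is regular closed, and \( (\gamma')^{-1}(V') \) is regular closed because \( V' \) is a member of the separating cover of \( \Omega' \) (hence regular) and \( \gamma' \) is an open continuous surjection; (iii) two regular closed sets agreeing on the dense set \( \Embed(\ZZ) \) coincide. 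With that spelled out, and with the standard normalisation \( \gamma'(0)=0 \) making \( \gamma' \) an actual group homomorphism (so that \( \ker\gamma' \) and its cosets make sense), your argument is complete.
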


\begin{proof}
Let \( ( p_{l} ) \) denote a period structure of \( x \). Since by assumption \( ( \omega , a ) \) is isolated in \( \bigcup_{ \widetilde{\omega} \in B_{X} } F_{x}( \widetilde{\omega} ) \), there exists \( l_{1} \in \NN \) with \( ( [ \omega ]_{ l_{1} } \times \{ a \} ) \cap ( \bigcup_{ \widetilde{\omega} \in B_{X} } F_{x}( \widetilde{\omega} )  ) = \{ ( \omega , a ) \} \). We choose \( l_{2} \) according to Lemma~\ref{lem:UniquePosModP} large enough, such that words with length at least \( p_{l_{2}} \) have a uniquely determined position modulo \( p_{l_{1}} \) in \( x \). We consider the set
\[ \mathscr{U} \DefAs \{ x[ j-p_{l_{2}} , j + p_{l_{2}} ] : j \in \omega( l_{1} ) + p_{l_{1}} \ZZ  \text{ with } x( j ) = a \} , \]
which contains sufficiently long words in \( x \), with central letter \( a \) and appearing around the positions \( \omega( l_{1} ) + p_{l_{1}} \ZZ \). By the aforementioned uniqueness property, the words from \( \mathscr{U} \) appear only around these positions. We define the following sliding block code:
\[ \psi \colon \, \Alphab^{[-p_{l_{2}} , p_{l_{2}}]} \to \Alphab \, , \; u \mapsto \begin{cases}
a & \text{if } u \in \mathscr{U},\\
b & \text{otherwise.}
\end{cases} \]
Let \( \Psi \colon X_{x} \to \{ a , b \}^{\ZZ} \) denote the factor map defined by \( \psi \). We have \( \Psi( x )( j ) = a \) if and only if \( j \in \omega( l_{1} ) + p_{l_{1}} \ZZ \) and \( x( j ) = a \) hold. By Proposition~\ref{prop:OdoFactIso}, the associated odometer \( \Omega^{\prime} \) of the factor subshift \( X_{\Psi( x )} \) is a factor of \( \Omega \). In particular, if \( (q_{m}) \) denotes a period structure of \( \Omega^{\prime} \), then for every \( q_{m} \) there exists \( p_{l} \) with \( q_{m} \mid p_{l} \) (see Proposition~\ref{prop:OdoFactIso} again).

Next we show that conversely, for every \( p_{l} \) there exists \( q_{m} \) with \( p_{l} \mid q_{m} \), thus proving \( \Omega^{\prime} = \Omega \). First we use that \( \omega \) is a discontinuity point with \(  ( \omega , a ) \in F_{x} \). For every \( l \in \NN \) it follows therefore from Proposition~\ref{prop:AperDx} and the definition of \( \FactOdo_{x} \) that \( \omega( l ) + p_{l} \ZZ \) contains positions where the value of \( x \) is \( a \) and positions where the value of \( x \) is not \( a \). For \( l \geq l_{1} \) this implies that \( \Psi( x ) \) takes values \( a \) and \( b \) on \( \omega( l ) + p_{l} \ZZ \), which yields
\begin{equation}
\label{eqn:OmegInAper}
\omega( l ) + p_{l} \ZZ \subseteq \Aper( p_{l} , \Psi( x )  ) \quad \text{for all } l \geq l_{1} . 
\end{equation}
Secondly, let \( l > l_{1} \) be arbitrary and recall that \( l_{1} \) was chosen such that \( \omega \) is the only discontinuity point of \( \tau_{x} \) with value \( a \) in \( [ \omega ]_{l_{1}} \). Thus, for every \( \widetilde{ \omega } \in [ \omega ]_{l_{1}} \setminus [ \omega ]_{l} \) there exists \( l( \widetilde{\omega} ) \in \NN \) such that the value of \( \tau_{x} \) on \( \Embed( \ZZ ) \cap [ \widetilde{\omega} ]_{l( \widetilde{\omega} )} \) is either never equal to \( a \), or is constant \( a \). Without loss of generality, we may assume \( l( \widetilde{\omega} ) \geq l \). The sets \( [ \widetilde{\omega} ]_{l( \widetilde{\omega} )} \), with \( \widetilde{ \omega } \in [ \omega ]_{l_{1}} \setminus [ \omega ]_{l} \), form an open cover of the compact set \(  [ \omega ]_{l_{1}} \setminus [ \omega ]_{l} \). Hence there is a finite subcover. In particular, there exists \( \widehat{l} \geq l \) (given by the largest \( l( \widetilde{\omega} ) \) in the subcover) and finitely many \( \widehat{l} \)-cylinder sets partitioning \(  [ \omega ]_{l_{1}} \setminus [ \omega ]_{l} \), such that \( \tau_{x} \) is either never equal to \( a \), or is constant \( a \) on each of them. In other words, every arithmetic progression \( s + p_{l} \ZZ \subseteq \omega( l_{1} ) + p_{l_{1}} \ZZ \) with \( s \neq \omega( l ) \) consists of \( p_{\, \widehat{l}\,} \)-progressions on which \( x \) is either never equal to \( a \), or is constant \( a \). Consequently, \( \Psi(x) \) is \( p_{\, \widehat{l}\,} \)-periodic on each \( s + p_{l} \ZZ \subseteq \omega( l_{1} ) + p_{l_{1}} \ZZ \) with \( s \neq \omega( l ) \). Since additionally \( \Psi( x ) \) has constant value \( b \) outside of \( \omega( l_{1} ) + p_{l_{1}} \ZZ \) (and is thus \( p_{l_{1}} \)-periodic there), we obtain
\begin{equation}
\label{eqn:AperInOmeg}
\Aper( p_{\, \widehat{l}\,} , \Psi( x )  ) \subseteq \omega( l ) + p_{l} \ZZ .
\end{equation}
Moreover, by~\eqref{eqn:OmegInAper} there are \( j_{1} , j_{2} \in \omega( \widehat{l} ) + p_{\, \widehat{l}\,} \ZZ \) with \( \Psi( x )( j_{1} ) \neq \Psi( x )( j_{2} ) \). Since \( (q_{m} ) \) is a period structure of \( \Psi( x ) \), there exists \( q_{m} \) such that \(  j_{1} , j_{2} \in \Per( q_{m} , \Psi( x ) ) \) holds. To finish the argument, we combine \( j_{1} + q_{m} \equiv j_{2} + q_{m} \mod p_{\, \widehat{l}\,} \) and
\[ \Psi( x )( j_{1} + q_{m} ) = \Psi( x )( j_{1} ) \neq \Psi( x )( j_{2} ) = \Psi( x )( j_{2} + q_{m} ) \]
to conclude \( j_{1}+q_{m} \in \Aper( p_{\, \widehat{l}\,} , \Psi( x ) ) \subseteq  \omega( l ) + p_{l} \ZZ \), see~\eqref{eqn:AperInOmeg}. Since \( j_{1} \in \omega( \widehat{l} ) + p_{\, \widehat{l}\,} \ZZ \subseteq \omega( l ) + p_{l} \ZZ \) holds by definition of \( j_{1} \), we obtain \( p_{l} \mid q_{m} \) as claimed, proving that \( X_{x} \) and \( X_{\Psi(x)} \) indeed have the same associated odometer \( \Omega \).

It only remains to show that \( B_{\Psi( X )} = \{ \omega \} \) holds: on the one hand, Equation~\eqref{eqn:OmegInAper} implies \( \{ \omega \} \subseteq B_{\Psi( X )} \). On the other hand, Equation~\eqref{eqn:AperInOmeg} implies \( B_{\Psi( X )} \subseteq [\omega]_{l} \) for all \( l > l_{1} \), and hence \( B_{\Psi( X )} \subseteq \{ \omega \} \).
\end{proof}

\begin{rem}
Instead of a single discontinuity point \( \omega \in B_{X} \) for which \( ( \omega , a ) \) is isolated in \( \bigcup_{ \widetilde{\omega} \in B_{X} } F_{x}( \widetilde{\omega} ) \), we could consider a version of Theorem~\ref{thm:IsoDiscontFact} with a finite set \( \{ \omega_{1} , \hdots , \omega_{N} \} \subseteq B_{X} \) and values \( a_{1} , \hdots , a_{N} \in \Alphab \) such that each \( ( \omega_{n} , a_{n} ) \) is isolated in \( \bigcup_{ \widetilde{\omega} \in B_{X} } F_{x}( \widetilde{\omega} ) \). As in the proof of Theorem~\ref{thm:IsoDiscontFact}, we could then construct a factor map \( \Psi \) that preserves exactly the discontinuities \( \{ \omega_{1} , \hdots , \omega_{N} \} \). However, in this setting the maximal equicontinuous factor of \( X_{\Psi( x )} \) need not be equal to the maximal equicontinuous factor of \( X_{x} \). The underlying reason is, that an odometer \( \Odo \) can only be the maximal equicontinuous factor of a Toeplitz subshift \( X_{x} \), if \( X_{x} \) is generated by a semicocycle which is invariant under no rotation in \( \Odo \), see \cite[Theorem~5.2]{DownDur_FactToepl}. That is automatically the case if there is only one discontinuity point, but may fail if there are several of them.
\end{rem}

\subsection{Necessary condition}

In Theorem~\ref{thm:IsoDiscontFact} we constructed a factor subshift with property~\ref{item:FinD}, based on the existence of finite words which appear only around a unique discontinuity point. As a sufficient condition, we used that there exists a point \( ( \omega , a ) \) in the graph closure such that \( \omega \) is locally the only discontinuity point with value \( a \). In general, this condition is sufficient, but not necessary for the existence of a factor subshift with property~\ref{item:FinD} (see Example~\ref{exmpl:OxtWithFactor} below). In the following, we weaken this condition and consider a discontinuity \( \omega \) with values \( a , b \in \Alphab \), such that locally no other discontinuity assumes both values. More formally, we call the letters \( a \neq b \in \Alphab \) an \emph{isolated value pair for \( \omega \)} (with respect to the semicocycle \( \tau_{x} \)), if there exists a neighbourhood \( [\omega]_{l} \) of \( \omega \) with \( \{ \widetilde{\omega} \in [\omega]_{l} : (\widetilde{\omega} , a) , (\widetilde{\omega} , b) \in F_{x}( \widetilde{\omega} ) \} = \{ \omega \}  \). In general, this will still not be a necessary condition for a factor with property~\ref{item:FinD} (see Example~\ref{exmpl:OxtWithFactor} again), but it becomes necessary once we additionally assume separated holes (Corollary~\ref{cor:SepHolesNecCond} below). In fact, we prove in Theorem~\ref{thm:SepHolesIsoValue} the slightly stronger statement that, with separated holes, an isolated value pair in the subshift is a necessary condition for an isolated value pair in a factor subshift. Since every point in a finite boundary is isolated, this includes factor subshifts with property~\ref{item:FinD} as a special case. The importance of separated holes for our arguments stems from the fact that for them, the sliding block code ``sees'' at most one non-periodic position. Thus, if two values for this position result in different images in the factor subshift (that is, the corresponding discontinuity is preserved), then all discontinuities with these values are preserved. Hence non-isolated value pairs have non-isolated images.

\begin{thm}
\label{thm:SepHolesIsoValue}
Let \( x \in \Alphab^{\ZZ} \) be a non-periodic Toeplitz word with separated holes, and let \( \Omega \) denote the maximal equicontinuous factor of \( X_{x} \). If there exists a factor subshift \( X_{ \Psi(x) } \) of \( X_{x} \) with the same maximal equicontinuous factor \( \Omega \) and with a point \( \omega \in B_{\Psi( X )} \) that has an isolated value pair with respect to \( \tau_{\Psi( x )} \), then there also exists a point \( \omega^{\prime} \in B_{X} \) that has an isolated value pair with respect to \( \tau_{x} \).
\end{thm}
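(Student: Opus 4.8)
The plan is to push the isolated value pair from the factor subshift back through the sliding block code and to let separated holes guarantee that the pair we recover in \( X_x \) is again isolated. Write \( J \) for the radius of the sliding block code \( \psi \) defining \( \Psi \), and let \( \gamma \colon \Odo \to \Odo \) be the rotation with \( \FactOdo_x = \gamma \circ \FactOdo_{\Psi(x)} \circ \Psi \) furnished by Proposition~\ref{prop:BndryFactor}. Fix a level \( l_{*} \) beyond which the minimal distance between \( p_{l_{*}} \)-holes exceeds \( 2J \); then, for every \( y \in X_x \), any window of length \( 2J+1 \) contains at most one position of \( \Aper(y) \), since \( \Aper(y) \subseteq \Aper(p_{l_{*}}, y) \) and the latter is a shift of \( \Aper(p_{l_{*}}, x) \).

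First I would produce the candidate boundary point of \( X \). Let \( a, b \) be the isolated value pair for \( \omega \), isolated on the cylinder \( [\omega]_{l_0} \). By Proposition~\ref{prop:AperDx}~\ref{item:A2} there are \( y_1, y_2 \in \FactOdo_{\Psi(x)}^{-1}(\omega) \) with \( y_1(0) = a \) and \( y_2(0) = b \); picking \( z_i \in \Psi^{-1}(y_i) \) gives \( \FactOdo_x(z_i) = \gamma(\omega) \) and \( \psi(z_1[-J,J]) = a \neq b = \psi(z_2[-J,J]) \), so \( z_1 \) and \( z_2 \) differ somewhere in \( [-J,J] \). As differing positions of two elements with equal \( \FactOdo_x \)-image lie in \( \Aper(z_1) \), the single-hole-per-window property makes this differing position unique; call it \( j_0 \), and set \( \omega' \DefAs \Rot^{j_0}(\gamma(\omega)) \), \( c \DefAs z_1(j_0) \), \( d \DefAs z_2(j_0) \). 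Applying Proposition~\ref{prop:AperDx}~\ref{item:A2} to \( \Shift^{j_0}(z_i) \) shows \( (\omega', c), (\omega', d) \in F_x \) with \( c \neq d \), hence \( \omega' \in B_X \) with value pair \( c, d \).

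The core step is to show that \( c, d \) is isolated for \( \omega' \), and I would argue by contradiction. Suppose it is not. Choose \( l \geq \max(l_{*}, l_0) \) large enough that every position of \( [-J,J] \setminus \{j_0\} \) is \( p_l \)-periodic in \( z_1 \) (possible, since by separated holes \( \Aper(z_1) \cap [-J,J] = \{j_0\} \)). By assumption there is \( \tilde\omega' \in [\omega']_l \setminus \{\omega'\} \) carrying both \( c \) and \( d \), realised by \( w_1, w_2 \in \FactOdo_x^{-1}(\tilde\omega') \) with \( w_1(0) = c \), \( w_2(0) = d \). Put \( w_i' \DefAs \Shift^{-j_0}(w_i) \), so that \( \FactOdo_x(w_i') = \Rot^{-j_0}(\tilde\omega') \) and \( w_i'(j_0) = w_i(0) \). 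Because a rotation preserves the level on which two points of \( \Odo \) agree, \( \tilde\omega' \in [\omega']_l \) forces \( \Rot^{-j_0}(\tilde\omega') \) and \( \gamma(\omega) = \FactOdo_x(z_i) \) to share their first \( l \) coordinates; by~\eqref{eqn:UniqueShift} the words \( w_i' \) and \( z_i \) then agree on their common \( p_l \)-periodic positions, in particular on \( [-J,J] \setminus \{j_0\} \), while at \( j_0 \) both carry the value \( c \), respectively \( d \). Hence \( w_i'[-J,J] = z_i[-J,J] \), so \( \Psi(w_1')(0) = a \) and \( \Psi(w_2')(0) = b \). Setting \( \nu \DefAs \gamma^{-1}(\Rot^{-j_0}(\tilde\omega')) \), both \( \Psi(w_1') \) and \( \Psi(w_2') \) lie in \( \FactOdo_{\Psi(x)}^{-1}(\nu) \), so \( (\nu, a), (\nu, b) \in F_{\Psi(x)} \). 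Finally, \( \nu \) arises from \( \tilde\omega' \) by the same rotation \( \gamma^{-1} \circ \Rot^{-j_0} \) that sends \( \omega' \) to \( \omega \); since rotations preserve agreement levels, \( \tilde\omega' \in [\omega']_l \setminus \{\omega'\} \) with \( l \geq l_0 \) yields \( \nu \in [\omega]_{l_0} \setminus \{\omega\} \). This contradicts the isolation of the pair \( a, b \) at \( \omega \), so \( c, d \) is indeed isolated for \( \omega' \).

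I expect the main obstacle to be the matching identity \( w_i'[-J,J] = z_i[-J,J] \): it requires simultaneously using the separated-holes bound to locate the unique hole in each window at \( j_0 \), invoking~\eqref{eqn:UniqueShift} so that agreement of finitely many odometer coordinates pins the words down on their periodic positions, and tracking that the rotation \( \gamma^{-1} \circ \Rot^{-j_0} \) transports cylinder neighbourhoods of \( \omega' \) to cylinder neighbourhoods of \( \omega \) of the same level. By comparison, producing \( \omega' \) and checking \( \omega' \in B_X \) is routine bookkeeping via Propositions~\ref{prop:AperDx} and~\ref{prop:BndryFactor}.
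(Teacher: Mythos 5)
Your proposal is correct and follows essentially the same route as the paper's proof: pull the value pair back through the sliding block code, use separated holes to isolate the unique differing position \( j_{0} \) in the window \( [-J,J] \), and derive a contradiction by transporting a nearby boundary point carrying both values back to the factor via the matching of words on \( [-J,J] \). The only cosmetic differences are that you carry the rotation \( \gamma \) explicitly (the paper normalises it to the identity) and use a single well-chosen cylinder level instead of a sequence \( (\omega_{l})_{l} \).
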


\begin{proof}
We write \( \psi : \Alphab^{[ -J , J ]} \to \Alphab \) for the sliding block code associated to the factor map \( \Psi \colon X_{x} \to X_{ \Psi(x) } \), and \( (p_{l}) \) for a period structure of \( x \). Let \( a , b \) denote an isolated value pair for \( \omega \in B_{ \Psi( X ) } \) with respect to \( \tau_{\Psi( x )} \). Then there exist \( \widetilde{x}_{1} , \widetilde{x}_{2} \in \FactOdo_{\Psi( x )}^{-1}( \omega ) \) with \( \widetilde{x}_{1}( 0 ) =a \) and \( \widetilde{x}_{2}( 0 ) = b \). Consider now any \( x_{1} \in \Psi^{-1}( \widetilde{x}_{1} ) \) and \( x_{2} \in \Psi^{-1}( \widetilde{x}_{2} ) \). Because of \( \Psi( x_{1} )( 0 ) \neq \Psi( x_{2} )( 0 ) \), there is \( j \in [ -J , J ] \) with \( x_{1}( j ) \neq x_{2}( j ) \). We claim that \( x_{1}( j ) , x_{2}( j ) \) is an isolated value pair for \( \Rot^{j}( \omega ) \in B_{X} \) with respect to \( \tau_{x} \).

First we note that \( \FactOdo_{x}( x_{1} ) = \FactOdo_{\Psi( x )}( \Psi( x_{1} ) ) = \omega \) implies \( ( \Rot^{j}( \omega ) , x_{1}( j ) ) \in F_{x} \) by Proposition~\ref{prop:AperDx}, and similarly we obtain \(  ( \Rot^{j}( \omega ) , x_{2}( j ) ) \in F_{x} \). Assume now that our claim is false. Then there exists a sequence \( (\omega_{l})_{l} \) in \( \Odo \), such that \( \Rot^{j}( \omega_{l} ) \in [ \Rot^{j}( \omega ) ]_{l} \setminus \{ \Rot^{j}( \omega ) \} \) holds for every \( l \in \NN \), and each \( \Rot^{j}( \omega_{l} ) \) has the values \( x_{1}( j ) \) and \( x_{2}( j ) \) in \( F_{x} \). Note that for each \( \omega_{l} \) there are \( y_{1} , y_{2} \in \FactOdo_{x}^{-1}( \omega_{l} ) \) with \( y_{1}( j ) = x_{1}( j ) \) and \( y_{2}( j ) = x_{2}( j ) \), see Proposition~\ref{prop:AperDx}~\ref{item:A2}.  Since \( \FactOdo_{x}( x_{i} ) = \FactOdo_{ \Psi(x) }( \Psi( x_{i} ) ) = \omega \) and \( \FactOdo_{x}( y_{i} ) = \omega_{l} \) are in the same \( l \)-cylinder set, it follows that \( x_{i} \) and \( y_{i} \), with \( i = 1 , 2 \), agree on \( \Per( p_{l} , x_{i} ) = \Per( p_{l} , y_{i} ) \). Because of separated holes, for sufficiently large \( l \in \NN \) any two \(p_{l}\)-holes are a distance of more than \( 2J+1 \) apart. Therefore \( j \) is the only non-\( p_{l} \)-periodic position of \( x_{i} \) and \( y_{i} \) within \( [-J , J ] \), and we obtain \( x_{i}[ -J , J ] = y_{i}[ -J , J ] \). In particular this implies
\[ \Psi( y_{1} )( 0 ) = \Psi( x_{1} )( 0 ) = \widetilde{x}_{1}( 0 ) = a \quad \text{and} \quad \Psi( y_{2} )( 0 ) = b \, . \]
Together with \( \FactOdo_{\Psi( x )}(\Psi( y_{1} )) = \FactOdo_{x}( y_{1} ) = \omega_{l} = \FactOdo_{ \Psi( x ) }(\Psi( y_{2} )) \), the above yields \( (\omega_{l}, a) , (\omega_{l}, b) \in F_{ \Psi( x ) } \) for all sufficiently large \( l \). Since \( \omega_{l} \in [ \omega ]_{l} \setminus \{ \omega \} \) is arbitrarily close to \( \omega \), this contradicts that \( a , b \) is an isolated value pair for \( \omega \) with respect to \( \tau_{ \Psi( x )} \).
\end{proof}

\begin{cor}
\label{cor:SepHolesNecCond}
Let \( x \) and \( \Omega \) be as in Theorem~\ref{thm:SepHolesIsoValue}. If there exists a factor subshift \( X_{ \Psi(x) } \) of \( X_{x} \) with the same maximal equicontinuous factor \( \Omega \) and with property~\ref{item:FinD}, then there exists \( \omega \in B_{X} \) which has as isolated value pair with respect to \( \tau_{x} \).
\end{cor}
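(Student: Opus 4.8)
The plan is to reduce the statement directly to Theorem~\ref{thm:SepHolesIsoValue}. That theorem already transports an isolated value pair from a factor subshift \( X_{\Psi(x)} \) (with the same maximal equicontinuous factor \( \Omega \)) back to a point of \( B_{X} \), provided \( x \) has separated holes. Hence it suffices to show that the hypothesis of property~\ref{item:FinD} on \( X_{\Psi(x)} \) automatically furnishes a point \( \omega \in B_{\Psi( X )} \) together with an isolated value pair for it with respect to \( \tau_{\Psi( x )} \). The guiding observation is exactly the one emphasised before the theorem: in a \emph{finite} boundary every point is isolated, and being a discontinuity point it must already carry at least two distinct values.

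To make this precise I would first note that \( X_{\Psi(x)} \) is non-periodic, since it has the same infinite maximal equicontinuous factor \( \Omega \) as the non-periodic word \( x \). By the remark following Proposition~\ref{prop:AperDx} (the boundary is empty if and only if the word is periodic), this gives \( B_{\Psi( X )} \neq \emptyset \), so we may fix some \( \omega \in B_{\Psi( X )} \). As \( B_{\Psi( X )} \) is finite by property~\ref{item:FinD} and the cylinder sets \( [ \omega ]_{l} \) form a neighbourhood base shrinking to \( \{ \omega \} \), there exists \( l \in \NN \) with \( [ \omega ]_{l} \cap B_{\Psi( X )} = \{ \omega \} \). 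Because \( \omega \) is a discontinuity point of \( \tau_{\Psi( x )} \), the fibre \( F_{\Psi( x )}( \omega ) \) is not a singleton, so I can choose distinct letters \( a , b \in \Alphab \) with \( ( \omega , a ) , ( \omega , b ) \in F_{\Psi( x )} \).

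It then remains to verify the isolated-value-pair condition. For any \( \widetilde{\omega} \in [ \omega ]_{l} \setminus \{ \omega \} \) we have \( \widetilde{\omega} \notin B_{\Psi( X )} \) by the choice of \( l \); since \( B_{\Psi( X )} \) is precisely the set of discontinuities of \( \tau_{\Psi( x )} \), the fibre \( F_{\Psi( x )}( \widetilde{\omega} ) \) is a singleton and therefore cannot contain both \( ( \widetilde{\omega} , a ) \) and \( ( \widetilde{\omega} , b ) \). This shows \( \{ \widetilde{\omega} \in [ \omega ]_{l} : ( \widetilde{\omega} , a ) , ( \widetilde{\omega} , b ) \in F_{\Psi( x )}( \widetilde{\omega} ) \} = \{ \omega \} \), that is, \( a , b \) is an isolated value pair for \( \omega \) with respect to \( \tau_{\Psi( x )} \). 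Applying Theorem~\ref{thm:SepHolesIsoValue} now yields a point \( \omega^{\prime} \in B_{X} \) with an isolated value pair with respect to \( \tau_{x} \), which is the assertion. I expect no genuine obstacle here, as the corollary is essentially a packaging of the theorem; the only point requiring a little care is the two-step passage ``finite boundary \( \Rightarrow \) isolated point'' and ``discontinuity point \( \Rightarrow \) at least two values'', both of which are immediate from the semicocycle description of \( B_{\Psi( X )} \) recalled in Section~\ref{subsec:SepCover}.
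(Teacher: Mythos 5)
Your proposal is correct and follows essentially the same route as the paper: establish \( B_{\Psi( X )} \neq \emptyset \) from non-periodicity, use finiteness of \( B_{\Psi( X )} \) to isolate a boundary point in a cylinder set, observe that its fibre carries at least two values while all nearby fibres are singletons, and then invoke Theorem~\ref{thm:SepHolesIsoValue}. No gaps.
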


\begin{proof}
First we note that, since \( x \) is non-periodic and \( X_{ \Psi( x ) }\) has the same maximal equicontinuous factor as \( X_{x} \), also \( \Psi( x ) \) is non-periodic (see Section~\ref{subsec:Subshifts}), which implies \( B_{\Psi( X )} \neq \emptyset \). Since \( B_{\Psi( X )} \) is finite, for every \( \omega \in B_{\Psi( X )} \) there exists a neighbourhood \( [\omega]_{l} \) with \( B_{\Psi( X )} \cap [\omega]_{l} = \{ \omega \} \). In particular, \( \omega \) is the only point in \( [\omega]_{l} \) for which \( F_{\Psi(x)}( \omega ) \) is not a singleton. Hence the values in \( F_{\Psi(x)}( \omega ) \) are isolated and applying Theorem~\ref{thm:SepHolesIsoValue} finishes the proof.
\end{proof}

\subsection{Oxtoby sequences on two letters}

Many examples in this article are generalised Oxtoby subshifts on the alphabet \( \Alphab = \{ a, b \} \). Unfortunately, those satisfy neither the assumptions of Theorem~\ref{thm:IsoDiscontFact} nor that of Theorem~\ref{thm:SepHolesIsoValue}. Indeed, as we have seen in Proposition~\ref{prop:OxtoNotB}, generalised Oxtoby subshifts never have separated holes, which are required in Theorem~\ref{thm:SepHolesIsoValue}. Moreover, for two letters the notions of an isolated value pair, of an isolated point \( ( \omega , a ) \) in \( \bigcup_{ \widetilde{\omega} \in B_{X} } F_{x}( \widetilde{\omega} ) \) and of an isolated discontinuity \( \omega \) in \( B_{X} \) are all equivalent. Their existence is required in Theorem~\ref{thm:IsoDiscontFact}, but they are not present in Oxtoby subshifts, as we show next.

\begin{prop}
\label{prop:OxtoNoIso}
Let \( x \) be a generalised Oxtoby sequence with respect to a period structure \( (p_{l}) \) and let \( \Odo \) be the associated odometer. Then \( x \) has no isolated discontinuity points, that is, no \( \omega \in B_{X} \) is isolated in \( B_{X} \).
\end{prop}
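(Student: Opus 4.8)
The plan is to first convert the topological question about isolated points of \( B_{X} \) into a purely combinatorial statement about coherent sequences of holes, and then to exploit the ``clustering'' that is forced by the two defining properties of Oxtoby sequences. The starting point is the characterisation
\[ \omega \in B_{X} \iff \omega( l ) \in \Aper( p_{l} , x ) \cap [ 0 , p_{l} - 1 ] \text{ for all } l \in \NN . \]
To obtain it, I would pick any \( y \in \FactOdo_{x}^{-1}( \omega ) \) (the factor map is onto) and use~\eqref{eqn:UniqueShift} to get \( \Per( p_{l} , y ) = \Per( p_{l} , x ) - \omega( l ) \), hence \( \Aper( p_{l} , y ) = \Aper( p_{l} , x ) - \omega( l ) \). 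Thus \( 0 \in \Aper( p_{l} , y ) \) is equivalent to \( \omega( l ) \in \Aper( p_{l} , x ) \), and intersecting over all \( l \) shows \( 0 \in \Aper( y ) \) if and only if \( \omega( l ) \in \Aper( p_{l} , x ) \) for every \( l \). By Proposition~\ref{prop:AperDx}~\ref{item:A1}, \( 0 \in \Aper( y ) \) is in turn equivalent to \( \Rot^{0}( \omega ) = \omega \in B_{X} \), which gives the claimed equivalence (for any alphabet).

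With this in hand, fixing \( \omega \in B_{X} \) and an arbitrary level \( l \in \NN \), I would construct a second point \( \omega^{\prime} \in B_{X} \) with \( \omega^{\prime} \in [ \omega ]_{l} \) and \( \omega^{\prime} \neq \omega \). Set \( \omega^{\prime}( m ) \DefAs \omega( m ) \) for \( m \leq l \). To branch at the next level, note that \( \omega( l ) \in \Aper( p_{l} , x ) \), and that the second Oxtoby condition provides at least two indices \( k \) with \( \Aper( p_{l+1} , x ) \cap [ k p_{l} , ( k+1 ) p_{l} - 1 ] \neq \emptyset \); by the first Oxtoby condition each such ``unfilled'' block satisfies \( \Aper( p_{l+1} , x ) \cap [ k p_{l} , ( k+1 ) p_{l} - 1 ] = \Aper( p_{l} , x ) \cap [ k p_{l} , ( k+1 ) p_{l} - 1 ] \), so the position \( \omega( l ) + k p_{l} \) (a \( p_{l} \)-hole lying in that block) is in fact a \( p_{l+1} \)-hole. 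One of these lifts is \( \omega( l+1 ) \) itself, but since there are at least two unfilled blocks I can choose \( \omega^{\prime}( l+1 ) \DefAs \omega( l ) + k p_{l} \in \Aper( p_{l+1} , x ) \) for a different block, so that \( \omega^{\prime}( l+1 ) \neq \omega( l+1 ) \) while \( \omega^{\prime}( l+1 ) \equiv \omega( l ) \bmod p_{l} \). The same mechanism extends the construction upwards: given \( \omega^{\prime}( m ) \in \Aper( p_{m} , x ) \), the two Oxtoby conditions again produce a lift \( \omega^{\prime}( m+1 ) \in \Aper( p_{m+1} , x ) \) with \( \omega^{\prime}( m+1 ) \equiv \omega^{\prime}( m ) \bmod p_{m} \). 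Iterating (formally by dependent choice) yields a coherent \( \omega^{\prime} \in \Odo \) with \( \omega^{\prime}( m ) \in \Aper( p_{m} , x ) \) for all \( m \), hence \( \omega^{\prime} \in B_{X} \) by the characterisation above.

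Since \( \omega^{\prime} \) agrees with \( \omega \) up to level \( l \) we have \( \omega^{\prime} \in [ \omega ]_{l} \), and \( \omega^{\prime}( l+1 ) \neq \omega( l+1 ) \) guarantees \( \omega^{\prime} \neq \omega \). As \( l \) was arbitrary, every neighbourhood \( [ \omega ]_{l} \) of \( \omega \) contains a point of \( B_{X} \) distinct from \( \omega \), so \( \omega \) is not isolated in \( B_{X} \). The part that needs the most care is the bookkeeping that ties the two Oxtoby conditions together: one must check that a \( p_{l} \)-hole in the correct residue class really persists as a \( p_{l+1} \)-hole inside any unfilled block (this is exactly the content of the first condition), and that the second condition supplies not merely one but genuinely a \emph{second} such block, so that the branch \( \omega^{\prime} \neq \omega \) is available (the upward extension step then only needs existence of one lift). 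Verifying that the resulting coherent hole sequence lands back in \( B_{X} \) is handled cleanly by the initial characterisation, which is why I would establish it first.
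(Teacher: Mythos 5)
Your argument is correct, and it reaches the conclusion by a genuinely different route than the paper. The combinatorial heart is the same in both proofs: the second Oxtoby condition supplies at least two unfilled \( p_{l} \)-blocks per \( p_{l+1} \)-period, and the first condition guarantees that a \( p_{l} \)-hole inside an unfilled block persists as a \( p_{l+1} \)-hole. But the implementations differ. You work entirely on the odometer side: you first isolate the characterisation \( \omega \in B_{X} \Leftrightarrow \omega( l ) \in \Aper( p_{l} , x ) \cap [ 0 , p_{l} - 1 ] \) for all \( l \) (the forward implication is exactly what the paper extracts in the proof of Proposition~\ref{prop:BdHolesFiniteD}, and your converse via \eqref{eqn:UniqueShift} and Proposition~\ref{prop:AperDx}~\ref{item:A1} is sound), and then build \( \omega^{\prime} \) directly as a coherent sequence of hole positions, branching away from \( \omega( l+1 ) \) at level \( l+1 \) and lifting one level at a time. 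The paper instead works in the subshift: it picks, for each \( k \), two hole positions \( \omega( l ) + n_{k} p_{l} \) and \( \omega( l ) + ( n_{k} + s ) p_{l} \) at distance \( s p_{l} \), takes accumulation points \( y_{1} , y_{2} \) of the corresponding shifts of \( x \), pushes them forward under \( \FactOdo_{x} \), and argues that \( y_{2} = \Shift^{ s p_{l} }( y_{1} ) \) forces the two images to differ, so at least one of them is a boundary point in \( [ \omega ]_{l} \setminus \{ \omega \} \). Your version buys a cleaner conclusion (you know from the start which point differs from \( \omega \), rather than having two candidates of which at least one works) and avoids compactness/accumulation arguments at the cost of an explicit inductive lifting (dependent choice); the paper's version avoids stating the inverse-limit characterisation of \( B_{X} \) as a separate step but needs the observation that some unfilled intervals remain unfilled at \emph{every} later stage in order to run the limit argument. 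Both are complete proofs.
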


\begin{proof}
For arbitrary, fixed \( \omega \in B_{X} \) and \( l \in \NN \) we will show that \( ( [ \omega ]_{l} \cap B_{X} ) \setminus \{ \omega \} \neq \emptyset \) holds. Let \( y \in \FactOdo_{x}^{-1}( \omega ) \). Our proof is based on the fact that \( \omega( l ) \) corresponds to a non-periodic position in \( x \), and because of the Oxtoby structure, there exists another non-periodic position, a fixed multiple of \( p_{l} \) away. Hence there is another boundary point in  \( [ \omega ]_{l} \). More formally, we note that Proposition~\ref{prop:AperDx}~\ref{item:A1} and the definition of \( \FactOdo_{x} \) imply
\[ \omega( l ) \in \Aper( \Shift^{-\omega( l )}( y ) ) \subseteq \Aper( p_{l} , \Shift^{-\omega( l )}( y ) ) = \Aper( p_{l} , x ) .\]
Because \( x \) is an Oxtoby sequence, \( \Aper( p_{l+1} , x ) \) equals \( \Aper( p_{l} , x ) \) on at least two intervals \( [ mp_{l} , ( m+1 )p_{l} - 1 ] \) within \( [0 , p_{l+1} - 1 ] \). Let \( s \cdot p_{l} \) with \( s \in [ 1 , \frac{ p_{l+1} }{ p_{l} }-1 ] \) denote the distance between two such intervals. Since some unfilled intervals remain completely unfilled in each step, for every \( k \geq l \) there exists \( n_{k} \in \NN \) such that \( \omega( l ) + n_{k} p_{l} \) and \( \omega( l ) + (n_{k} + s) p_{l} \) are in \( \Aper( p_{k} , x ) \). Let \( y_{1} , y_{2} \in X_{x} \) denote accumulation points of the sequences \( (\Shift^{ \omega( l ) + n_{k} p_{l}}( x ))_{k \in \NN} \) respectively \( (\Shift^{ \omega( l ) + ( n_{k} + s) p_{l}}( x ))_{k \in \NN} \) along a common subsequence of \( k \)'s. For \( i = 1, 2 \), we conclude from \( \Per( p_{l} , y_{i} ) = \Per( p_{l} , \Shift^{\omega( l )}( x ) ) \) that \( \FactOdo_{x}( y_{i} ) \in [\omega]_{l} \) holds. Moreover, for every fixed \( p_{m} \) we have for all sufficiently large \( k \) from the chosen subsequence:
\begin{align*}
\Aper( p_{m} , y_{1} ) & = \Aper( p_{m} , \Shift^{ \omega( l ) + n_{k} p_{l} }( x ) )\\
&\supseteq \Aper( p_{k} , \Shift^{ \omega( l ) + n_{k} p_{l} }( x )  ) \ni 0 .
\end{align*}
This implies \( \FactOdo_{x}( y_{1} ) \in B_{X} \), and similarly we obtain \( \FactOdo_{x}( y_{2} ) \in B_{X} \). Finally, \( y_{2} = \Shift^{ s p_{l} }( y_{1} ) \) yields \( \FactOdo_{x}( y_{2} ) \neq \FactOdo_{x}( y_{1} ) \), so that at least one of \( \FactOdo_{x}( y_{1} ) \) and \( \FactOdo_{x}( y_{2} ) \) must differ from \( \omega \).
\end{proof}

Below we discuss two examples of Oxtoby subshifts with two letters, one of them not admitting a factor subshift with property~\ref{item:FinD} over the same odometer (Example~\ref{exmpl:OxtNoFactor}) and one doing so (Example~\ref{exmpl:OxtWithFactor}). This shows that, in absence of separated holes, the necessary condition in Theorem~\ref{thm:SepHolesIsoValue} is not necessary any more. In other words: without separated holes, subshifts without isolated value pairs may or may not have factor subshifts with property~\ref{item:FinD} and the same maximal equicontinuous factor. Related to this, there are several interesting questions that we currently cannot answer:
\begin{tightitemize}
\item{What is a good criterion to distinguish these two types of behaviour, that is, when does property~\ref{item:FinD} hold for a factor subshift of a generalised Oxtoby subshift?}
\item{Generalised Oxtoby subshifts are ``relatively far'' from having separated holes, in the sense that every Toeplitz orbital \( y \) in them satisfies \( \Card{ \Aper( y ) } = \infty \) (see Proposition~\ref{prop:OxtoNotB}), as opposed to \( \Card{ \Aper( y ) } = 1 \) with separated holes. How much can we weaken the separated holes condition and still retain Theorem~\ref{thm:SepHolesIsoValue}, or how much can we restrict \( \Card{ \Aper( y ) } \) and still obtain both types of behaviour?}
\item{What are analogous statements to Theorems~\ref{thm:IsoDiscontFact} and~\ref{thm:SepHolesIsoValue} for property~\ref{item:FinOrb} instead of property~\ref{item:FinD}, either in general, in the Oxtoby setting or in some other interesting class?}
\end{tightitemize}

\begin{exmpl}
\label{exmpl:OxtNoFactor}
Let \( x \) denote the generalised Oxtoby sequence from Example~\ref{exmpl:OxtProxiAsym}. Recall that by definition, the holes are filled with words \( u^{(l)}_{i} \in \{ a , b \}^{[1,2^{l}]} \), which have a single \( b \) at position \( i \in [ 1 , 2^{l} ] \) and value \( a \) at all other positions. Let \( (p_{l}) \) denote the associated period structure of \( x \). Let \( \Psi \) be a factor map and assume that \( X_{\Psi( x )} \) has the same maximal equicontinuous factor as \( X_{x} \). In particular, \( \Psi( x ) \) is non-periodic. Let \( \psi \colon \{ a , b \}^{ [ -J , J ] } \to \{ a, b \} \) denote the sliding block code that defines \( \Psi \), and let \( l_{0} \in \NN \) be large enough such that \( [ -J , J ] \subseteq \Per( p_{l_{0}} , x ) \) holds. We fix an arbitrary \( l \geq l_{0} \) and define \( I_{m} \DefAs [m p_{l} , ( m+1 ) p_{l} - 1] \). We will show that \( \Psi( x ) \) satisfies the Oxtoby properties with respect to \( (p_{l})_{l \geq l_{0} } \), that is,
\begin{tightenumerate}
\item{\label{item:Oxto1}on each \( I_{m} \), the set \( \Aper( p_{l+1} , \Psi( x ) ) \) is empty or equal to \( \Aper( p_{l} , \Psi( x ) ) \),}
\item{\label{item:Oxto2}there are two \( m \in [ 0 , \frac{ p_{l+1} }{ p_{l} } - 1 ] \) with \( I_{m} \cap \Aper( p_{l+1} , \Psi( x ) ) \neq \emptyset\).}
\end{tightenumerate}
We warn the reader that we do not assume here that \( (p_{l})_{l} \) is a period structure of \( \Psi( x ) \); the notation \( \Aper( p_{l} , \Psi( x ) ) \) should be understood only as a statement about positions that do not have period \( p_{l} \). Before we consider the factor subshift \( X_{\Psi( x )} \) in more detail, we first prove the following main observation about \( x \):
\begin{tightenumerate}
\item[(\(\star\))]{\parbox{\linewidth}{Let \( m \in \ZZ \) be such that \( I_{m} \cap \Aper( p_{l+1} , x ) \neq \emptyset \) holds. To see all the words that appear in \( x \) at \( I_{n} \) for \( n \in \ZZ \), it suffices to consider only \( n \in m + \frac{ p_{l+1} }{ p_{l} } \ZZ \), that is: for every \( n \in \ZZ \) there exists \( \widetilde{m} \in  m + \frac{ p_{l+1} }{ p_{l} } \ZZ \) with \( x( I_{n} ) = x( I_{\widetilde{m}} ) \).}}
\end{tightenumerate}
Indeed, on the one hand \( x \) is Oxtoby, and all non-\( p_{l} \)-periodic positions in \( I_{n}  \) are therefore filled in the same step, say from \( p_{k-1} \) to \( p_{k} \) (with \( k > l \)). Thus, \( x( I_{n} \cap \Aper(p_{l} , x) ) \) is a subword of length \( 2^{l+1} \) of \( u^{(k)}_{i} \) for a suitable \( i \), that is, it is \( a^{2^{l+1}} \) or \( u^{(l+1)}_{i^{\prime}} \) for a suitable \( i^{\prime} \). On the other hand, in each \(p_{l+1}\)-interval there are two \( p_{l} \)-intervals that intersect \( \Aper( p_{l+1} , x ) \). The words \( u^{(l+2)}_{i} \) appear in the non-\( p_{l+1} \)-periodic positions of these intervals. Accordingly, for \( \widetilde{m} \in  m + \frac{ p_{l+1} }{ p_{l} } \ZZ \) all first halves or all second halves of \( u^{(l+2)}_{i} \) appear at \( I_{\widetilde{m}} \cap \Aper(p_{l} , x) \), that is, we see \( a^{2^{l+1}} \) as well as all \( u^{(l+1)}_{i} \) with \( i \in [ 1, 2^{l+1} ] \). Since additionally \( x( I_{\widetilde{m}} ) \) and \( x( I_{n} ) \) clearly are equal on their \( p_{l} \)-periodic parts, (\(\star\)) follows. As a consequence of our main observation, we obtain:
\begin{tightenumerate}
\item[(\(\star \star\))]{\parbox{\linewidth}{Let \( m \in \ZZ \) be such that \( I_{m} \cap \Aper( p_{l+1} , x ) \neq \emptyset \) holds. Then we have \( I_{m} \cap \Aper( p_{l+1 } , \Psi( x ) ) = I_{m} \cap \Aper( p_{l} , \Psi( x ) ) \).}}
\end{tightenumerate}
For a proof, first note that \( \Aper( p_{l+1} , \Psi( x ) ) \subseteq \Aper( p_{l} , \Psi( x ) ) \) is clear. For the converse inclusion, consider an arbitrary \( r \in [0 , p_{l} - 1] \cap \Aper( p_{l} , \Psi( x ) ) \). Then there exists \( n \in \ZZ \) with \( \Psi( x )( r + m p_{l} ) \neq \Psi( x )( r + n p_{l} ) \). By applying (\(\star\)) to \( m \) and \( n \), we obtain \( \widetilde{m} \in m + \frac{ p_{l+1} }{ p_{l} } \ZZ \) with \( x( I_{n} ) = x( I_{\widetilde{m}} ) \). Since \( \Psi( x )( j ) \) depends only on \( x[ j-J , j+J ] \) and since we have \( [ -J , J ] \subseteq \Per( p_{l} , x ) \), this yields \( \Psi( x )( I_{n} ) = \Psi( x )( I_{\widetilde{m}} ) \). In particular, we have
\[ \Psi( x )( r + m p_{l} ) \neq \Psi( x )( r + n p_{l} ) = \Psi( x )( r + \widetilde{m} p_{l} ) , \]
which implies \( r + m p_{l} \in \Aper( p_{l+1} , \Psi( x ) ) \) and hence~(\(\star \star\)). We can now prove the Oxtoby properties: for~\ref{item:Oxto1}, we assume that \( I_{m} \cap \Aper( p_{l+1} , \Psi( x ) ) \neq \emptyset \) holds. Because of \( [ -J , J ] \subseteq \Per( p_{l} , x ) \), this implies \( I_{m} \cap \Aper( p_{l+1} , x ) \neq \emptyset \), so~(\(\star \star\)) gives the desired result. For~\ref{item:Oxto2}, we use that \( \Psi( x ) \) is non-periodic, and hence \( I_{m} \cap \Aper( p_{l} , \Psi( x ) ) \neq \emptyset \) holds for all \( m \in \ZZ \). Since \( x \) is Oxtoby, there exist two distinct \( m_{1} , m_{2} \in [ 0 , \frac{ p_{l+1} }{ p_{l} } - 1 ] \) with \( I_{m_{i}} \cap \Aper( p_{l+1} , x ) \neq \emptyset\) for \(i = 1, 2\). By~(\(\star \star\)), we conclude that \( \Psi( x ) \) has \( p_{l+1} \)-holes in \( I_{m_{1}} \) and \( I_{m_{2}} \).

To finish the example, we will prove (as in Proposition~\ref{prop:OxtoNotB}) that the inequality \( 2^{l - l_{0} } \leq \Card{ \Aper( y ) } \) holds for every \( y \in  X_{\Psi( x )} \) with \( \Aper( y ) \neq \emptyset \). Since \( l \geq l_{0} \) was arbitrary, \( \Card{ \Aper( y ) } \in \{ 0 , \infty \} \) follows. Moreover, \( \Psi( x ) \) is non-periodic, so there exists an element with \( \Card{ \Aper( y ) } = \infty \), and hence neither~\ref{item:FinCap} nor~\ref{item:FinD} hold for \( X_{\Psi( x )} \). To prove the inequality, let \( y \) be such that \( \Aper( y ) \neq \emptyset \) holds, and let \( (q_{t})_{t} \) denote a period structure of \( \Psi( x ) \). Then there exists \( T \in \NN \) with
\[ \Aper( q_{T} , y ) \cap [ -p_{l}+1 , p_{l}-1 ] = \Aper( y ) \cap [ -p_{l}+1 , p_{l}-1 ] . \] Since the period structures \( (p_{l}) \) of \( x \) and \( (q_{t}) \) of \(  \Psi( x ) \) generate the same odometer, we can find \( L \geq l \) with \( q_{T} \mid p_{L} \) and \( S \geq T \) with \( p_{L} \mid q_{S} \). Since we can assume without loss of generality that \( 0 \in \Aper( y ) = \bigcap_{t=1}^{\infty} \Aper( q_{t} , y ) \) holds, we obtain \( 0 \in \Aper( q_{S} , y ) \subseteq \Aper( p_{L} , y ) \). By the Oxtoby properties, there exists therefore an interval of length \( p_{l} \) around zero, in which all non-\(p_{l}\)-periodic positions of \( y \) are non-\( p_{L} \)-periodic. Also by the Oxtoby properties, every interval of length \( p_{l} \) in \( y \) contains at least \( 2^{ l - l_{0} } \)-many non-\(p_{l}\)-periodic positions. We obtain
\begin{align*}
2^{l - l_{0} } &\leq \Card{ \Aper( p_{L} , y ) \cap [ -p_{l}+1 , p_{l} - 1 ] } \leq \Card{ \Aper( q_{T} , y ) \cap [ -p_{l}+1 , p_{l} - 1 ] }\\
&=  \Card{ \Aper( y ) \cap [ -p_{l}+1 , p_{l} - 1 ] } \leq  \Card{ \Aper(  y ) } .
\end{align*}
\end{exmpl}

\begin{exmpl}
\label{exmpl:OxtWithFactor}
We construct a generalised Oxtoby sequence $ \toeplw \in \{a,b\}^{\ZZ} $ with respect to the period structure \( p_{l} = 4^{l}\), with a factor subshift \( X_{\Psi( x  )} \) whose boundary \( B_{\Psi( X )} \) is a singleton. We define $\toeplw$ stepwise through hole-filling. An interval of length \( p_{l} \) consists of four  \( p_{l-1} \)-intervals, and in our construction we fill all \( p_{l-1} \)-holes in the first and in the last of them \( p_{l} \)-periodically, while all \( p_{l-1} \)-holes in the second and the third of them remain holes at level \( p_{l} \). This yields \( \Card{ \Aper( p_{l} , \toeplw ) \cap [ 0 , p_{l}-1 ] } = 2^{l} \). To define $ \toeplw $, it suffices to give for every \( l \in \NN \) the two words of length \( 2^{l-1} \) which are used to fill the \( p_{l-1} \)-holes in \( [ 0 , p_{l-1}-1 ] \) respectively \( [ 3p_{l-1} , 4p_{l-1} - 1 ] \). These words are:
\begin{tightitemize}
\item{for \( l=1 \): \( \; a \; \) and \( \; b \),}
\item{for \( l=2 \): \( \; a a \; \) and \( \; b b \),}
\item{for \( l=3 \): \( \; a a a b \; \) and \( \; b b b a \),}
\item{in general for \( l \geq 2 \): \( \; a a ( a b )^{2^{l-2}-1} \; \) and \( \; b b ( b a )^{2^{l-2}-1} \).}
\end{tightitemize}
The first steps of the process yield the following infinite words:
\begingroup
\abnormalparskip{0pt}
\setlength{\abovedisplayskip}{3pt}
\par
\scriptsize
\[
\setlength{\arraycolsep}{0.6pt}
\begin{array}{*{66}{c}}
\hdots & a & ? & ? & b & a & ? & ? & b & a & ? & ? & b & a & ? & ? & b & a & ? & ? & b & a & ? & ? & b & a & ? & ? & b & a & ? & ? & b & a & ? & ? & b & a & ? & ? & b & a & ? & ? & b & a & ? & ? & b & a & ? & ? & b & a & ? & ? & b & a & ? & ? & b & a & ? & ? & b & \hdots \\ 
\hdots & a & a & a & b & a & ? & ? & b & a & ? & ? & b & a & b & b & b & a & a & a & b & a & ? & ? & b & a & ? & ? & b & a & b & b & b & a & a & a & b & a & ? & ? & b & a & ? & ? & b & a & b & b & b & a & a & a & b & a & ? & ? & b & a & ? & ? & b & a & b & b & b & \hdots \\
\hdots & a & a & a & b & a & a & a & b & a & a & b & b & a & b & b & b & a & a & a & b & a & ? & ? & b & a & ? & ? & b & a & b & b & b & a & a & a & b & a & ? & ? & b & a & ? & ? & b & a & b & b & b & a & a & a & b & a & b & b & b & a & b & a & b & a & b & b & b & \hdots \\ 
\hdots & a & a & a & b & a & a & a & b & a & a & b & b & a & b & b & b & a & a & a & b & a & a & a & b & a & a & b & b & a & b & b & b & a & a & a & b & a & a & b & b & a & a & b & b & a & b & b & b & a & a & a & b & a & b & b & b & a & b & a & b & a & b & b & b & \hdots
\end{array}
\]
\endgroup
We note that the first non-negative \( p_{l} \)-hole is at position \( \sum_{i=0}^{l-1} 4^{i} = \frac{ 4^{l}-1 }{ 3 } \), since in every step all holes in the first interval of length \(4^{i}\) are filled and none in the second. Moreover, \( ( 4^{l} )_{l} \) is indeed a period structure: the properties~\ref{item:PerStruc:Div} and~\ref{item:PerStruc:Z} of the definition are immediate. For~\ref{defi:PerStruc:EssPeriod}, notice that every position is \( 4^{l} \)-periodic, and that its smallest period must therefore be of the form \( 2^{n} \) for some \( n \in \NN \). It is easily checked that for instance \( \frac{ 4^{l}-1 }{ 3 } \) is a \( 4^{l+1} \)-periodic position that is not \( 2 \cdot 4^{l} \)-periodic.

 Let  \( \Psi \colon X_{x} \to X_{ \Psi(x) } \) be the factor map that is defined by the sliding block code \( \psi : \Alphab^{[ -1 , 1 ]} \to \Alphab \), which is given by \(aaa \mapsto a \) and \( u \mapsto b \) for all \( u \in \Alphab^{[ -1 , 1 ]} \setminus  \{ aaa \} \). By construction, \( x \) has value \( b \) on \( 3 + 4 \ZZ \) and value \( a \) on \( 0 + 4 \ZZ \). It follows that \( \Psi( x )( j ) = a \) holds if and only if \( j \in 1 + 4 \ZZ \) and \( x( j ) = x( j+1 ) = a \) hold. Since only the first pair of holes is filled with \( aa \), we obtain that \( \Psi( x )( j ) = a \) is equivalent to \( j \in \bigcup_{l = 1}^{\infty} ( \frac{ 4^{l}-1 }{ 3 } + 4^{l+1} \ZZ ) \). In particular, for every \( l \in \NN \) there is a \( 4^{l} \)-periodic position in \( \Psi ( x) \) which is not \( 2 \cdot 4^{l-1} \)-periodic. By Proposition~\ref{prop:OdoFactIso} it now follows that \( (4^{l}) \) is a period structure of \( X_{\Psi( x ) } \) and that \( X_{x} \) and \( X_{\Psi( x ) } \) have the same odometer as their maximal equicontinuous factor. Moreover, we obtain \( \Aper( 4^{l} , \Psi( x ) ) = \frac{ 4^{l}-1 }{ 3 } + 4^{l} \ZZ \), that is, \( \Psi( x ) \) has a single non-\( p_{l} \)-periodic position per period. By Proposition~\ref{prop:BdHolesFiniteD} this implies that \( B_{ \Psi( X ) } \) is a singleton.
\end{exmpl}

\begin{rem}
\label{rem:ProxiNotAsympt}
We note that the subshift \( X_{x} \) from Example~\ref{exmpl:OxtWithFactor} has proximal orbits which are non-asymptotic (see Example~\ref{exmpl:WilliamsConstr} for another such subshift). To show that this is indeed the case, it suffices by Propositions~\ref{prop:OxtoNotB} and~\ref{prop:FiniteFiber} to check that \( \omega \DefAs ( \frac{ 4^{l}-1 }{ 3 } )_{l\geq 1} \in B_{X} \) satisfies \( \Card{ \FactOdo_{x}^{-1}( \omega ) } < \infty \). We fix \( l \in \NN \), define \( I_{m} \DefAs [m p_{l} , ( m+1 ) p_{l} - 1] \) for \( m \in \ZZ \), and consider the finite words \( x( I_{m} ) \). Clearly, all these words agree on the \( p_{l} \)-periodic positions of \( x \). Moreover, for each \( m \in \ZZ \) let \( n > l \) be minimal with \( I_{m} \subseteq \Per( p_{n} , x ) \). Since each \( I_{m} \) contains \( 2^{l} \)-many \( p_{l} \)-holes,  \( x( I_{m} \cap \Aper( p_{l} , x ) ) \) is a subword of length \( 2^{l} \) of either \( aa (ab)^{2^{n-2}-1} \) or \( bb (ba)^{2^{n-2}-1} \), starting at a multiple of \( 2^{l} \) within the word. There are only four possibilities:
\[ aa (ab)^{2^{l-1}-1} \;\; , \quad \;  (ab)^{2^{l-1}} \;\; , \quad \;  bb (ba)^{2^{l-1}-1} \;\; , \quad \;  (ba)^{2^{l-1}} . \]
It only remains to notice that for each \( y \in \FactOdo_{x}^{-1}( \omega ) \), minimality and the definition of the factor map imply \( y[ 0 , p_{l} -1 ] \in \{ x( \omega( l ) + I_{m} ) : m \in \ZZ \} \). Using this and \( [ 0 , \omega(l) - 1 ] \subseteq \Per( p_{l} , x ) \), we obtain 
\[ \Card{ \{  y[ 0 , p_{l} -1 ] \!: y \in \FactOdo_{x}^{-1}( \omega ) \} } = \Card{\{ x( \omega( l ) + I_{m} ) \!: m \in \ZZ \}} = \Card{\{ x( I_{m} ) \!: m \in \ZZ \}} \leq 4 . \]
Since \( l \in \NN \) was arbitrary, this yields \( \Card{\FactOdo_{x}^{-1}( \omega )} \leq 4\).

Alternatively, we can check directly that $\InfWord_{l} \DefAs \Shift^{k_{l}}( \toeplw )$ and $ \infWord_{l} \DefAs  \Shift^{3 \cdot 4^{l+1} + k_{l}}( \toeplw ) $, with
\[ k_{l} \DefAs 2 \cdot 4^{0} + 1 \cdot 4^{1} +  2 \cdot 4^{2} + 1 \cdot 4^{3} + \hdots + \begin{cases}
2 \cdot 4^{l} & \text{if } l \text{ is even},\\
1 \cdot 4^{l} & \text{if } l \text{ is odd},
\end{cases}\]
define two elements $ \InfWord \DefAs \lim_{l\to\infty}\InfWord_{l} $ and $ \infWord \DefAs \lim_{l\to\infty} \infWord_{l} $ which are proximal but not asymptotic. Indeed, since \( y_{l} \) and \( z_{l} \) differ only by a shift of \( 3 \cdot 4^{l+1} \), we note that \( \FactOdo_{x}( y_{l} ) \) and \( \FactOdo_{x}( z_{l} ) \) agree on their first \( l \) entries. Continuity of \( \FactOdo_{x} \) thus yields $ \FactOdo_{x}(\InfWord) = \FactOdo_{x}(\infWord) $. In particular, \( \Orb( \InfWord) \) and \( \Orb(\infWord) \) are proximal orbits by Proposition~\ref{prop:ProxiBndry}. On the other hand, \( k_{l}  \) denotes a position ``near the middle'' of the first \( 4^{l+1} \)-block; more precisely: for odd \( l \) we have 
\[ k_{l} = 2 ( 4^{0} + 4^{2} + \hdots + 4^{l-1} ) + 4 ( 4^{0} + 4^{2} + \hdots + 4^{l-1} ) = \frac{ 2 }{ 5 } 4^{ l+1 } - \frac{ 2 }{ 5 } , \]
and similarly we have \( k_{l} = \frac{ 3 }{ 5 } 4^{ l+1 } - \frac{ 2 }{ 5 }  \) for even \( l \). According to the definitions of \( y_{l} \) and \( z_{l} \) as shifts of \( x \), the origin of $\InfWord_{l}$ lies in the first \( 4^{l+1} \)-block of $\toeplw$, and the origin of $\infWord_{l} $ lies in the fourth \( 4^{l+1} \)-block. Consequently, the \( 4^{l} \)-holes around the origin of $\InfWord_{l}$ are filled by \( a a ( a b )^{2^{l}} \), while the \( 4^{l} \)-holes around the origin of $\infWord_{l}$ are filled by \( b b ( b a )^{2^{l}} \). It follows that the number of positions (left and right of the origin) on which $\InfWord_l$ and $\infWord_l$ differ, tends to infinity, so their limits $\InfWord$ and $\infWord$ are not asymptotic in either direction. By Proposition~\ref {prop:ElemToOrb}, we conclude that also the orbits \( \Orb( x ) \), \( \Orb( y ) \) are not asymptotic.
\end{rem}

\begingroup
\small

\endgroup

\end{document}